\documentclass{amsart}
\usepackage{amsmath, amscd, amssymb, amsthm}
\usepackage{mathrsfs}
\usepackage{bbm}
\usepackage{latexsym}
\usepackage{amsfonts}
\usepackage{graphicx}
\usepackage{needspace}
\usepackage[all,cmtip]{xy}
\usepackage[
  colorlinks=true,
  linkcolor=blue,
  citecolor=blue,
  urlcolor=blue,
  anchorcolor=blue,
  breaklinks=true,
  pagebackref
]{hyperref}

\hypersetup{
  pdftitle={Hermitian threefolds with constant holomorphic sectional curvature},
  pdfauthor={Shuwen Chen and Fangyang Zheng}
}

\usepackage{geometry}
\numberwithin{equation}{section}

\newtheorem{theorem}{Theorem}
\newtheorem{lemma}{Lemma}
\newtheorem{corollary}[theorem]{Corollary}
\newtheorem{remark}{Remark}
\newtheorem{proposition}{Proposition}

\newtheorem{conjecture}{Conjecture}

\newcommand{\C}{\mathbb C}
\newcommand{\R}{\mathbb R}

\newcommand{\B}{\mathcal B}
\newcommand{\At}{\mathcal A}
\newcommand{\Nt}{\mathcal N}
\newcommand{\KM}{K_M}
\newcommand{\LL}{\mathcal L}
\newcommand{\PP}{\mathcal P}
\newcommand{\KB}{\mathsf K}

\renewcommand*\backref[1]{}
\renewcommand*\backrefalt[4]{ \ifcase #1 \or (cited on page #2) \else (cited on pages #2) \fi}

\providecommand{\eps}{}
\renewcommand{\eps}{\varepsilon}
\newcommand{\be}{\begin{equation}}
\newcommand{\ee}{\end{equation}}
\newcommand{\bea}{\begin{eqnarray}}
\newcommand{\eea}{\end{eqnarray}}

\newcommand{\bp}{\bar\partial}
\newcommand{\ip}[2]{(#1,#2)}

\newcommand{\step}[1]{\par\smallskip\noindent\emph{#1}\ }

\DeclareMathOperator{\tr}{tr}
\DeclareMathOperator{\re}{Re}
\DeclareMathOperator{\im}{Im}
\DeclareMathOperator{\rank}{rank}
\DeclareMathOperator{\Sym}{Sym}
\DeclareMathOperator{\Ann}{Ann}
\DeclareMathOperator{\adj}{adj}
\DeclareMathOperator{\divg}{div}
\DeclareMathOperator{\Curl}{Curl}
\DeclareMathOperator{\diag}{diag}

\DeclareMathOperator{\Span}{span}

\allowdisplaybreaks[2]
\def\XXint#1#2#3{{\setbox0=\hbox{$#1{#2#3}{\int}$ }
\vcenter{\hbox{$#2#3$ }}\kern-.6\wd0}}

\begin{document}

\title[Hermitian threefolds with constant holomorphic sectional curvature]
{Hermitian threefolds with constant holomorphic sectional curvature}

\author{Shuwen Chen}
\address{Shuwen Chen. School of Mathematical Sciences, Chongqing Normal University, Chongqing 401331, China}
\email{{3153017458@qq.com}}\thanks{Zheng is the corresponding author. Zheng is partially supported by National Natural Science Foundations of China
with the grant No. 12471039 and  12141101, and is supported by the 111 Project D21024.}

\author{Fangyang Zheng}
\address{Fangyang Zheng. School of Mathematical Sciences, Chongqing Normal University, Chongqing 401331, China}
\email{20190045@cqnu.edu.cn; franciszheng@yahoo.com} \thanks{}

\subjclass[2020]{53C55 (primary), 53C05 (secondary)}
\keywords{Chern connection, holomorphic sectional curvature, Hermitian threefolds, complex space forms, Chern flat manifolds.}
\date{}

\begin{abstract}
An old conjecture in non-K\"ahler geometry states that if the Chern holomorphic sectional curvature of a compact Hermitian manifold is equal to a  constant $c$, then the metric  must be  K\"ahler when $c\neq 0$ and be Chern flat  when $c=0$. The conjecture is known to be true in dimension two by the work of Balas--Gauduchon and Apostolov--Davidov--Mu\v{s}karov in the 1980s and 1990s. Recently, Qin and Tian proved the conjecture in complex dimension three when $c\neq 0$, and the $c=0$ case was proved by Chen--Li under the additional assumption that the metric is balanced. In this article we remove this additional hypothesis and complete the confirmation of the conjecture in complex dimension three. The proof relies heavily on the fact that in dimension three the torsion $3$-tensor can be equivalently expressed as a $2$-tensor twisted by the canonical line bundle, plus Bochner type integration formulas. In particular,  the method cannot be directly generalized to higher dimensions.
\end{abstract}

\maketitle

\markleft{Shuwen Chen and Fangyang Zheng}
\markright{Hermitian threefolds with constant holomorphic sectional curvature}

\tableofcontents

\section{Introduction and statement of results}\label{intro}

A long-lasting open question in non-K\"ahler geometry is the following:

\begin{conjecture} \label{conj1}
Let $(M^n,g)$ be a compact Hermitian manifold. Assume its Chern holomorphic sectional curvature is equal to a constant $c$. Then the metric $g$ must be K\"ahler when $c\neq 0$ and must be Chern flat when $c=0$.
\end{conjecture}

For the history and related developments of the conjecture, we refer to the survey \cite{Zheng}.

The conjecture is known to be true in dimension 2 by the work of Balas and Gauduchon \cite{Balas, BG} in 1985 when the constant is zero or negative, and by Apostolov, Davidov, and Mu\v{s}karov \cite{ADM} in 1996 when the constant is positive. In higher dimensions, the conjecture has been known in a number of special cases. For instance, it was confirmed for all twistor spaces by Davidov, Grantcharov, and Mu\v{s}karov \cite{DGM}, for all Chern K\"ahler-like manifolds by Tang \cite{Tang}, for all Bismut K\"ahler-like manifolds by Rao and Zheng \cite{RaoZ}, for all complex nilmanifolds by Li and Zheng \cite{LiZ}, for all Fujiki class ${\mathcal C}$ manifolds when $c\leq 0$ by Qin and Tian \cite{QinTian}, and for unimodular Hermitian Lie algebras containing an abelian ideal of real codimension two in our work \cite{ChenZcodim2}. In the important case of locally conformally K\"ahler manifolds, the conjecture was confirmed by H. Chen, L. Chen, and Nie \cite{CCN} in 2021 for the $c\leq 0$ cases, while the remaining $c>0$ case was completed by Huang and Wan in their recent work \cite{HW}.

Another important class consists of Bismut torsion-parallel (BTP) manifolds, whose Bismut connection (also known as Strominger connection in some literature, \cite{Bismut, Strominger}) has parallel torsion.
In \cite{ChenZ26}, we confirmed the conjecture for compact
non-balanced BTP manifolds and compact balanced BTP threefolds.
Wang and Zheng \cite{WangZ} established the balanced BTP fourfold case. In arbitrary dimensions, H.~Wang \cite{HWang} established the $c\neq0$ case for compact balanced BTP manifolds, while Wang and Zheng \cite{WangZ2} confirmed the conjecture for compact balanced BTP manifolds for all values of $c$. We also studied related questions for the Bismut  connection and more general canonical metric connections in \cite{ChenZ,ChenZheng}; see also the work of Chen and Nie \cite{ChenNie}. In \cite{ChenZparallel}, we further showed that a Hermitian connection with parallel torsion and nonzero constant holomorphic sectional curvature is torsion-free; consequently, the underlying Hermitian metric is K\"ahler. This result requires neither compactness nor completeness.

Related results have also been obtained for real bisectional curvature and pluriclosed metrics. Zhou and Zheng \cite{ZhouZ} proved that every compact Hermitian threefold with vanishing real bisectional curvature is Chern flat. Broder and Tang \cite{BroderTang} proved that pluriclosed metrics with vanishing Chern holomorphic sectional curvature on compact K\"ahlerian manifolds are flat K\"ahler metrics. They also established K\"ahler flatness for Hermitian metrics with vanishing real bisectional curvature on compact manifolds in Fujiki's class $\mathcal C$. More recently, Tang \cite{Tang26} proved that a compact Hermitian manifold of complex dimension at least two with constant real bisectional curvature has zero constant and is Chern flat. He also proved that a pluriclosed metric with constant negative Chern holomorphic sectional curvature on a compact K\"ahlerian manifold is K\"ahler.

Specializing in complex dimension three, Ma and Nie \cite{MaN} proved the conjecture for compact normal balanced threefolds with $c\leq0$, and Chen and Li \cite{ChenLi} subsequently removed the normality assumption in this range. More recently, Qin and Tian \cite{QinTian} confirmed the conjecture for $c\neq0$ on arbitrary compact Hermitian threefolds.

The main purpose of this article is to establish the $c=0$ case in dimension three:

\begin{theorem} \label{thm1}
Let $(M^3,g)$ be a compact Hermitian threefold. Assume that its Chern holomorphic sectional curvature is identically zero. Then $g$ is Chern flat.
\end{theorem}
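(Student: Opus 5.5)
The plan is to work in a local unitary frame $\{e_1,e_2,e_3\}$ and write the Chern curvature as $R_{i\bar j k\bar l}$ and the Chern torsion as $T^j_{ik}$. Vanishing holomorphic sectional curvature means the symmetrization $\widehat R_{i\bar j k\bar l}=\frac14(R_{i\bar jk\bar l}+R_{k\bar j i\bar l}+R_{i\bar lk\bar j}+R_{k\bar l i\bar j})$ vanishes identically. Hence $R$ equals its ``torsion part'', which the first Bianchi identity for the Chern connection expresses through $\nabla T$ and quadratic terms in $T$. Concretely, I would use the standard identities
\[
R_{k\bar j i\bar l}-R_{i\bar j k\bar l}=T^j_{ik,\bar l},\qquad R_{i\bar l k\bar j}-R_{i\bar j k\bar l}=\overline{T^{l}_{jk,\bar i}}
\]
(up to sign conventions), together with $\widehat R=0$. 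This writes $R_{i\bar jk\bar l}$ as a linear combination of $T^j_{ik,\bar l}$, its conjugate, and their symmetrizations. Differentiating once more and applying the Ricci commutation formula then gives a second-order system for $T$ whose only inhomogeneity is quadratic/cubic in $T$ and $R$.

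Dimension three enters as the abstract indicates. Since $T^j_{ik}$ is skew in $i,k$, we have $\Lambda^2 T^{1,0}\cong (T^{1,0})^*\otimes K_M^{-1}$. So $T$ is equivalent to a section $A$ of $\mathrm{End}(T^{1,0})\otimes K_M^{-1}$, i.e. a $3\times 3$ matrix $A_{pj}$ with $T^j_{ik}=\epsilon_{ikp}A_{pj}$. Splitting $A$ into its trace, its symmetric traceless part and its skew part (the skew part is equivalent to a vector field twisted by $K^{-1}$), the curvature identities become tractable equations for $\bar\partial$-type derivatives of $A$. The Gauduchon torsion $1$-form $\eta$ corresponds to the skew part, and the balanced case treated by Chen--Li corresponds to that part vanishing.

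The core step is a Bochner-type integration formula. I would take the Chern Ricci-type contractions of $R$ (first, second, third Ricci). Because $\widehat R=0$, these differ from each other by explicit expressions in $\nabla T$ and $|T|^2$, and the first Chern Ricci form is closed. I would integrate quantities such as $|A|^2$ and $|\mathrm{tr}A|^2$ against $\omega^3$ (or $\omega^2$ for a Gauduchon metric in the conformal class; to keep curvature conditions, I would instead integrate with weights $e^{f}$). The aim is to obtain an identity of the form $\int_M (\,|\nabla' A|^2 + Q(A)\,)\,\omega^3=0$, where $Q$ is a nonnegative quartic expression using the threefold algebra, such as $|A A^*-A^*A|^2$-type terms. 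This would force $\nabla' A=0$ and algebraic constraints. From these, the curvature formula would give $R\equiv 0$, possibly via a second integration showing that $T$ is $\nabla$-parallel and then invoking the known result for the Chern connection with parallel torsion (as in the BTP/parallel-torsion arguments cited earlier).

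I expect the main obstacle to be the non-balanced part. Divergence terms involving $\eta$ do not integrate to zero against $\omega^3$, and cross terms between the skew and symmetric parts of $A$ have no sign. I would first try to handle them by working with the Gauduchon conformal representative, recomputing how $\widehat R=0$ transforms. Failing that, I would find a second integral identity (e.g. from $\int\partial\bar\partial\omega\wedge\omega=0$-type Stokes arguments, or from integrating $\overline\partial$ of the $K^{-1}$-twisted vector field) whose combination with the first cancels the indefinite cross terms. This case-by-case algebra is specific to $3\times3$ matrices.
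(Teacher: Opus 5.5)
Your outline picks the right setting: the $\KM$-twisted $3\times3$ matrix, closedness of the first Chern--Ricci form, and weighted integration. But the step that would carry the whole proof is only stated as an aim. That step is an identity $\int_M(|\nabla'A|^2+Q(A))=0$ with $Q(A)$ a nonnegative quartic in the torsion, and it is not what the computation produces.

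Work with the symmetric part $\B\in\Sym^2E\otimes\KM$ of $A$ and put $t=|\B|^2$. The natural identities are then two. The first is a scalar identity $\PP(\chi-2t)=3|S|^2+2|\Nt|^2$ for the drift operator $\PP$, with $\chi=2\tr\rho$; a negative-part argument turns it into $\chi\geq2t$. The second is a weighted energy identity for $t$. Together they give only
\[
\int_Mt\,r_G\ \geq\ 6\int_Mt\bigl(|\Nt|^2+|S|^2\bigr)+\frac83\int_Mt^3+2\int_M(\chi-2t)t^2,
\qquad r_G=\re\tr(G^2),\quad G=\Phi^*-2\rho,
\]
where $\Phi_{i\bar j}=\eta_{i,\bar j}$.

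The leftover term is not a torsion cross term that another Stokes identity could cancel. $G$ is the alternating contraction of the curvature homomorphism $Q:\Sym^2\bar E\to E\otimes\KM$, and $S=-2\Sym_3Q$ is its symmetric part. Algebraically $Q$ can have $S=0$ together with $\re\tr(G^2)>0$, so no pointwise bound $r_G\leq3|S|^2$ comes for free. It has to be extracted from differential structure, and this is where almost all of the paper's work lies:
\begin{itemize}
\item the transport law $\nabla'Q=CQ$ coming from the second Bianchi identity;
\item the rank analysis of $Q$: rank one is handled by algebra, rank three forces $\nabla'\B=\frac12\eta\B$ and then $G=0$, and rank zero needs a separate argument because $Q=0$ does not give $R=0$;
\item real analyticity of $g$, used for continuation;
\item in rank two, the invariant plane and the dichotomy $\det\B\not\equiv0$ versus $\det\B\equiv0$;
\item in the case $\det\B\equiv0$, a holomorphic quotient and a line bundle of negative weighted degree with no sections, which forces the restricted quadratic form of $Q$ to be degenerate.
\end{itemize}
Your plan supplies none of this input, so there is no reason for your integrand to be nonnegative.

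Your endgame also does not work as stated. The integral argument yields $t=0$, i.e.\ $\B=0$, not $\nabla'A=0$. One then still needs the pure-trace case. There $d\omega=\vartheta\wedge\omega$ with $d\vartheta=0$, and locally $\vartheta=df$. The curvature is determined by $\partial\bp f$. Closedness of the first Chern--Ricci form makes $\tr_g\sqrt{-1}\partial\bp f$ either identically zero or nowhere zero. A short integration argument then rules out the second case and gives $R=0$ in the first.

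Appealing to the parallel-torsion result would fail anyway. That result concerns nonzero constant holomorphic sectional curvature and says nothing for $c=0$. Moreover, $\nabla'A=0$ is not full parallelism of $T$.

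A minor slip: $T$ corresponds to a section of $E\otimes E\otimes\KM$, not of $\mathrm{End}(E)\otimes K_M^{-1}$.
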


Recall that compact Chern flat manifolds are exactly compact quotients of complex Lie groups equipped with left-invariant metrics, by the classic work of Boothby \cite{Boothby}. In particular, in complex dimension three, one can write down the explicit list of all (connected and simply connected) complex Lie groups of complex dimension three.

Combining Theorem \ref{thm1} with the aforementioned result of Qin and Tian \cite{QinTian} for the $c\neq 0$ case, we get the affirmation of the conjecture in complex dimension three:

\begin{corollary}
Conjecture \ref{conj1} holds in complex dimension three.
\end{corollary}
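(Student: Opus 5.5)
The plan is to work in a local unitary frame $\{e_1,e_2,e_3\}$ and use the Chern connection, with torsion components $T^j_{ik}$ and curvature $R_{i\bar j k\bar\ell}$. We polarize the hypothesis $H\equiv 0$ as
\[
\sum_{\sigma} R_{\sigma(i)\bar j\sigma(k)\bar \ell}=0 \quad \text{(symmetrization in } i,k \text{ and in } j,\ell).
\]
This says that the symmetrization $\hat R$ of $R$ vanishes. The first Bianchi identity for the Chern connection expresses $R_{i\bar j k\bar\ell}-R_{k\bar j i\bar\ell}$ through $\nabla_{\bar j}T$. The curvature-symmetry defect in the $j,\ell$ slots is controlled by its conjugate. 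Together these give $R$ as an explicit expression in $\nabla T$, $\overline{\nabla T}$ and, after a second Bianchi-type commutation, $T\ast\bar T$. In particular the Chern Ricci forms and the scalar curvatures become expressions in the torsion.

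The key three-dimensional step, as the abstract indicates, is to rewrite the torsion. Since $T^j_{ik}$ is skew in $i,k$ and $\Lambda^2 T^{1,0}\cong \Omega^{1}\otimes K_M^{-1}$ when $n=3$, the torsion becomes a $2$-tensor
\[
P_{\ell j}=\tfrac12\sum_{i,k}\varepsilon_{ik\ell}T^j_{ik},
\]
which is a section of $\Lambda^{1,0}\otimes T^{1,0}\otimes K_M^{-1}$. We then decompose $P$ into its trace (the Gauduchon torsion $1$-form $\eta$), its symmetric traceless part $S$, and its skew part, which is again a vector twisted by $K^{-1}$.

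The curvature identities derived from $\hat R=0$ turn into first-order PDE on $S$, $\eta$, and the skew part. In particular, applying the Bianchi identities to the polarized vanishing should give equations of the form $\bar\partial$-type (resp. $\partial$-type) derivatives of these pieces equal to quadratic terms in $P$. Then we integrate Bochner formulas over compact $M$. For instance, compute $\Delta|\eta|^2$ and $\Delta|S|^2$ (using a Gauduchon-type divergence, or integrate against $\omega^{2}$), substitute the curvature expressed via torsion, and obtain identities
\[
\int_M \big(|\nabla' P|^2 + Q(P)\big)\,\omega^3 = 0,
\]
where $Q$ is a quartic or cubic torsion term.

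The aim is to show that suitable combinations are nonnegative. That forces $\nabla P$ to be highly constrained, ideally $\nabla_{\bar{}}T=0$, meaning the torsion is holomorphic. Then the Bianchi identity gives $R_{i\bar j k\bar \ell}=R_{k\bar j i\bar\ell}$, so the curvature is fully symmetric. With $\hat R=0$ this yields $R=0$, i.e.\ Chern flatness. This final step is also consistent with the known Chern Kähler-like case of Tang.

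The main obstacle is the sign of the quartic torsion terms $Q(P)$ in the integral identities. I would first try to choose coefficients in a linear combination of the Bochner identities for $|\eta|^2$, $|S|^2$ and $|P|^2$ so that the indefinite terms cancel. If that fails, I would split into cases according to the rank of the symmetric part $S$, diagonalizing $S$ pointwise by a unitary change (Takagi factorization) where it is nondegenerate. I would use the balanced case (Chen--Li), i.e.\ $\eta=0$, as a guide for which identity kills $S$, and handle the remaining $\eta$ contribution with a separate integration by parts against a Gauduchon metric.
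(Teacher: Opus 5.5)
Your proposal has two genuine gaps. First, the corollary asserts Conjecture~\ref{conj1} for every constant $c$, but you only polarize $H\equiv 0$. The case $c\neq 0$ is never addressed; there the required conclusion is that $g$ is K\"ahler, not flat. In the paper this half is not proved anew: the corollary is simply Theorem~\ref{thm1} combined with the theorem of Qin and Tian for $c\neq 0$ in dimension three. You must either cite that result or prove it.

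Second, for $c=0$ your outline stops exactly where the difficulty begins, and the step you defer is essentially the whole content of Theorem~\ref{thm1}.

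\begin{itemize}
\item Your tools (the mixed Bianchi identity giving the skew parts of $R$ through $\nabla''T$, the $K_M$-twisted $2$-tensor, Bochner integrals) are the paper's starting point, but they do not close. Combining its two identities (Propositions~\ref{prop:energy} and \ref{prop:scalar}) yields only \eqref{eq:compact-estimate}. Its left side $\int_M t\,r_G$, with $r_G=\re\tr(G^2)$ and $G=\Phi^*-2\rho$, has no sign.
\item This term is an indefinite quadratic in the curvature (equivalently in $\nabla''T$). It is not a torsion polynomial that a clever choice of coefficients could absorb.
\item The paper closes the argument only through a pointwise bound $r_G\le 3|S|^2$. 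Obtaining it occupies Sections~\ref{sec:rank}--\ref{sec:singular}:
  \begin{itemize}
  \item ruling out output ranks one and three of the curvature homomorphism $Q$;
  \item a separate treatment of $Q=0$;
  \item on the dense rank-two locus, the alternative $\det\mathcal B\not\equiv 0$, where a normalized frame gives two incompatible values of one curvature component;
  \item and the alternative $\det\mathcal B\equiv 0$, handled by a holomorphic quotient, a line bundle $\mathscr A$ with $H^0(M,\mathscr A^2)=0$, and a degenerate binary quadratic form.
  \end{itemize}
\item Splitting by the rank of the symmetric part and Takagi-diagonalizing gives no mechanism for such a bound. You also offer no route to your target $\nabla''T=0$, although the implication $\nabla''T=0\Rightarrow R=0$ is correct.
\end{itemize}

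Your decomposition of $P$ is also off. Since $\Lambda^2E^*\cong E\otimes K_M$, the tensor $P$ is a $K_M$-valued bilinear form on $E^*$, not an endomorphism. Consequently:
\begin{itemize}
\item its trace is not unitarily invariant;
\item $\eta$ is its skew part, not its trace;
\item the symmetric part $\mathcal B\in\Sym^2E\otimes K_M$ is already irreducible, so there is no separate ``symmetric traceless'' piece.
\end{itemize}
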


Let us briefly describe the proof of Theorem \ref{thm1}, which relies heavily on the fact that we are in complex dimension three. Denote by $E=T^{1,0}M$ the holomorphic tangent bundle of the given Hermitian threefold $(M^3,g)$. The Chern torsion tensor $T$ is a linear map from $\Lambda^2E$ into $E$, hence a section of the bundle $\mbox{Hom}(\Lambda^2E, E)\cong \Lambda^2E^{\ast}\otimes E$. Since $\dim M=3$, we have a bundle isomorphism
\begin{equation}
 \Lambda^2E^{\ast} \cong E \otimes \Lambda^3E^{\ast} = E \otimes K_M,
 \end{equation}
where $K_M$ denotes the canonical line bundle of $M^3$. Therefore $T$ is equivalent to a section ${\mathcal A}$ of the bundle $E\otimes E\otimes K_M$, namely, a $K_M$-valued covariant $2$-tensor on $M$. Let ${\mathcal B}=\frac12 ( {\mathcal A} + \mbox{}^t\!{\mathcal A})$ be the symmetric part of ${\mathcal A}$. Then ${\mathcal B}$ is a section of $\Sym^2\!E \otimes K_M$, which contains a useful part of information of the Chern torsion tensor $T$. This $K_M$-valued symmetric $2$-tensor ${\mathcal B}$ will be a major tool in our proof. Another major tool is a tensor $Q$ formed by the Chern curvature tensor, which is a linear map from $\Sym^2\!\bar{E} $ into $E \otimes K_M$, and we will refer to $Q$ as the {\em curvature homomorphism}. The Bianchi identities give differential equations for these tensors, and two integral identities then provide a scalar inequality and an estimate which will be used repeatedly.

Assume that a compact Hermitian threefold $(M^3,g)$ has vanishing Chern holomorphic sectional curvature.  The first reduction shows that if $g$ is not Chern flat, then the rank of $Q$, which is the maximum for all $x\in M$ of the dimension of the image space of $Q_x$, is at most $2$, and the rank cannot be one.  The next step is to deal with the rank zero case, namely the $Q=0$ case. Note that this case does require a proof as $Q=0$ does not mean $R=0$ yet because $Q$ does not encode all the curvature components. Now we are left with the rank two case. In this case, we will show that the rank-two locus form an open dense subset in $M$, in which we obtain an invariant plane and a line determined by $\partial\eta$, where $\eta$ is Gauduchon's torsion one-form. We then divide the proof into two cases, depending on whether
$\det\B\not\equiv0$ or $\det\B\equiv0$. Both cases will lead to contradictions:
in the first case an adapted frame leads to two incompatible evaluations of the same curvature component, while in the second case a holomorphic quotient gives a degenerate binary quadratic form, from which we obtain the pointwise estimate needed to close the integral argument.

The paper is organized as follows. In \S\ref{sec:prelim} we fix the notations and derive the local identities. The integral formulas are established in \S\ref{sec:integral}. Section~\ref{sec:rank} treats the curvature-rank reduction, including the case $Q=0$. The geometry of the rank-two locus is studied in \S\ref{sec:geometry}. Sections~\ref{sec:full} and~\ref{sec:singular} treat the two determinant alternatives. The proof of Theorem~\ref{thm1}, together with the treatment of exceptional sets, is assembled in \S\ref{sec:conclusion}.

\vspace{0.3cm}

\section{Preliminaries}\label{sec:prelim}

Let $(M^3,g)$ be a Hermitian manifold. We will follow the notations of \cite{YangZ} and write $g=\langle\,,\,\rangle$ for its complex bilinear extension, and put $\ip XY=\langle X,\bar Y\rangle$ for the Hermitian product on $E=T^{1,0}M$. Thus this product is linear in its first argument. We use the same convention on the induced tensor bundles. Denote by $\nabla$, $T$, and $R$ the Chern connection, its torsion, and its curvature.  Our conventions are
\[
T(X,Y)=\nabla_XY-\nabla_YX-[X,Y],\qquad
R(X,Y)=[\nabla_X,\nabla_Y]-\nabla_{[X,Y]}.
\]

Let $e=\{e_1,e_2,e_3\}$ be a local unitary frame, and let $\varphi=\{\varphi_1,\varphi_2,\varphi_3\}$ be its dual coframe. Following the unhalved torsion convention used in \cite{ChenZheng,ZhaoZheng}, write
\begin{equation}\label{eq:torsion-convention}
T(e_i,e_k)=\sum_jT^j_{ik}e_j,\qquad
R_{i\bar jk\bar\ell}=\ip{R(e_i,\bar e_j)e_k}{e_\ell}.
\end{equation}
If $\theta$ and $\Theta$ are the Chern connection and curvature matrices in the convention $\nabla e_i=\sum_j\theta_{ij}e_j$, then
\begin{equation}\label{eq:structure}
d\varphi=-{}^t\theta\wedge\varphi+\tau,\qquad
\Theta=d\theta-\theta\wedge\theta,\qquad
\tau_j=\frac12\sum_{i,k}T^j_{ik}\varphi_i\wedge\varphi_k.
\end{equation}
In block-matrix calculations it will also be convenient to use the column-coefficient matrix $\Omega={}^t\!\theta$. Its curvature matrix is ${}^t\!\Theta=d\Omega+\Omega\wedge\Omega$. These are two matrix conventions for the same connection. The latter convention will always be specified when used.

Following the notions of \cite{ZhaoZ23, ZhaoZheng}, an index after a comma denotes a covariant derivative with respect to $\nabla$. In particular, $T^\ell_{ik,\bar j}=(\nabla_{\bar e_j}T)^\ell_{ik}$. Derivatives of a tensor with additional derivative indices act on those indices as well. We denote the canonical bundle by $\KM=\det E^*$ and put
\begin{equation}\label{eq:eta-rho}
\eta=\sum_i\eta_i\varphi_i,\quad \eta_i=\sum_kT^k_{ki},\quad
\Phi_{i\bar j}=\eta_{i,\bar j},\quad
\rho_{i\bar j}=\sum_kR_{i\bar jk\bar k},\quad
\rho^{(2)}_{i\bar j}=\sum_kR_{k\bar ki\bar j},
\end{equation}
where $\eta$ is Gauduchon's torsion one-form determined by $\partial\omega^2=-\eta\wedge\omega^2$, and $\rho$ is the first Chern--Ricci tensor. The associated fundamental form and volume are respectively
\[
\omega=\sqrt{-1}\sum_i\varphi_i\wedge\bar\varphi_i,\qquad dV=\frac{\omega^3}{6}.
\]

Unless otherwise indicated, all indices range from $1$ to $3$ and repeated indices are summed. The squared norm of a tensor is the sum of the squares of all its ordered unitary components. On symmetric tensors this is the induced subspace norm, without a factorial divisor. On exterior powers we use the exterior metric, so that the squared norm of a two-form is one half of its ordered two-index squared norm. We use $\Sym_3$ for the average over all six permutations and $\odot$ for normalized symmetric multiplication. The superscript $*$ denotes Hermitian adjoint, and ${}^t$ denotes transpose without conjugation.

\emph{For the remainder of the paper we assume the Chern holomorphic sectional curvature vanishes identically.} By polarization, this is equivalent to
\begin{equation}\label{eq:polarization}
R_{i\bar jk\bar\ell}+R_{k\bar ji\bar\ell}
+R_{i\bar\ell k\bar j}+R_{k\bar\ell i\bar j}=0.
\end{equation}

We start with the following
\begin{lemma}\label{lem:ricci}
Under the above assumption,
\begin{equation}\label{eq:ricci-conversion}
\rho^{(2)}=-3\rho+\Phi+\Phi^*,\qquad
\chi:=\tr\Phi=2\tr\rho\in\R.
\end{equation}
\end{lemma}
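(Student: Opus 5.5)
The plan is to contract the polarized identity \eqref{eq:polarization} over $k=\ell$ and then use the first Bianchi identity of the Chern connection to rewrite the two \lq\lq mixed\rq\rq\ contractions in terms of $\rho$ and $\Phi$.

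\emph{Step 1 (contraction).} Setting $k=\ell$ in \eqref{eq:polarization} and summing gives
\[
\rho_{i\bar j}+\sum_k R_{k\bar j i\bar k}+\sum_k R_{i\bar k k\bar j}+\rho^{(2)}_{i\bar j}=0 .
\]

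\emph{Step 2 (first Bianchi identity).} For the Chern connection, the first Bianchi identity reads
\[
R_{k\bar j i\bar\ell}-R_{i\bar j k\bar\ell}=-T^\ell_{ki,\bar j}.
\]
It is obtained by taking $d$ of the structure equation $d\varphi=-{}^t\theta\wedge\varphi+\tau$ in \eqref{eq:structure} and comparing $(2,1)$-parts. The overall sign depends on the conventions in \eqref{eq:torsion-convention}, and checking it is the main point needing care.
\begin{itemize}
\item Contracting $\ell=k$ and using $\eta_i=\sum_k T^k_{ki}$ gives $\sum_k R_{k\bar j i\bar k}=\rho_{i\bar j}-\Phi_{i\bar j}$.
\item The term $\sum_kR_{i\bar k k\bar j}$ is the complex conjugate of $\sum_k R_{k\bar i j\bar k}$, by the Hermitian symmetry $\overline{R_{a\bar b c\bar d}}=R_{b\bar a d\bar c}$. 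Hence it equals $\overline{\rho_{j\bar i}-\Phi_{j\bar i}}=\rho_{i\bar j}-(\Phi^*)_{i\bar j}$, using that $\rho$ is Hermitian.
\end{itemize}
Substituting both into Step 1 gives $4\rho+\rho^{(2)}-\Phi-\Phi^*=0$, which is the first formula in \eqref{eq:ricci-conversion}. If the Bianchi sign were the opposite one, this step would produce the wrong coefficients. So I would fix the sign by a direct local computation rather than by quoting it.

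\emph{Step 3 (traces).} First, $\tr\rho^{(2)}=\sum_{i,k}R_{k\bar k i\bar i}=\tr\rho$. Taking the trace of the first formula therefore gives $4\tr\rho=\tr\Phi+\overline{\tr\Phi}=2\re\tr\Phi$.

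It remains to show that $\tr\Phi$ is real. Take the trace of the contracted Bianchi identity from Step 2:
\[
\sum_{i,k}R_{k\bar i i\bar k}=\tr\rho-\tr\Phi .
\]
The left-hand side is real, because its conjugate is $\sum R_{i\bar k k\bar i}$, which is the same sum with the indices relabelled. Since $\tr\rho$ is real, $\tr\Phi$ is real as well. Hence $\chi=\tr\Phi=2\tr\rho\in\R$.
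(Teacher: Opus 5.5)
Your argument is essentially the paper's proof: contract the polarized identity over $k=\ell$, and get $C_{i\bar j}=\sum_kR_{k\bar ji\bar k}=\rho_{i\bar j}-\Phi_{i\bar j}$ from the mixed Bianchi identity. Your sign agrees with \eqref{eq:mixed-bianchi}, which the paper obtains from $R_{i\bar jk}{}^\ell=-\partial_{\bar j}\Gamma^\ell_{ik}$ in holomorphic coordinates. Then get the other mixed contraction as $C^*=\rho-\Phi^*$ by curvature reality, and deduce reality of $\tr\Phi$ from $\tr C=\overline{\tr C}$.

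There is one arithmetic slip. Substituting gives $3\rho+\rho^{(2)}-\Phi-\Phi^*=0$, not $4\rho+\rho^{(2)}-\Phi-\Phi^*=0$. Your Step 3 computation, $4\tr\rho=2\re\tr\Phi$, already uses the correct coefficient.
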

\begin{proof}
In local holomorphic coordinates the Chern coefficients satisfy
$\Gamma^\ell_{ik}g_{\ell\bar q}=\partial_i g_{k\bar q}$ and $\nabla_{\bar j}\partial_k=0$. Consequently,
\begin{equation}\label{eq:mixed-bianchi}
T^\ell_{ik,\bar j}=\partial_{\bar j}(\Gamma^\ell_{ik}-\Gamma^\ell_{ki})
=R_{k\bar ji}{}^\ell-R_{i\bar jk}{}^\ell.
\end{equation}
At the point under consideration take a unitary frame, and set
\[
C_{i\bar j}=\sum_kR_{k\bar ji\bar k},\qquad
D_{i\bar j}=\sum_kR_{i\bar kk\bar j}.
\]
Contracting \eqref{eq:mixed-bianchi} gives $\Phi=\rho-C$. Curvature reality,
$\overline{R_{i\bar jk\bar\ell}}=R_{j\bar i\ell\bar k}$, gives $D=C^*=\rho-\Phi^*$. On the other hand, setting $\ell=k$ in \eqref{eq:polarization} and summing yields
\[
\rho+C+D+\rho^{(2)}=0.
\]
This proves the first formula. The traces of $\rho$ and $\rho^{(2)}$ agree by relabeling the two summed indices. Moreover, $\tr C=\tr D=\overline{\tr C}$, so $\chi$ is real. Taking the trace now gives the second formula.
\end{proof}

Throughout this article, we will write $\varepsilon_{ijk}=0$ whenever $\{ i,j,k\} \neq \{ 1,2,3\}$, and
$$ \varepsilon_{123}=\varepsilon_{231}=\varepsilon_{312}=1, \varepsilon_{213}=\varepsilon_{321}=\varepsilon_{132}=-1.$$
As we mentioned in the previous section, using the above alternating symbol $\eps_{ijk}$ together with the local unit section $\nu=\varphi_1\wedge\varphi_2\wedge\varphi_3$ of $\KM$, we can identify the Chern torsion $T$ with a $K_M$-valued $2$-tensor $\At$. Set
\begin{equation}\label{eq:AB-def}
\At_{a\ell}=\frac12\sum_{i,k}\eps_{ika}T^\ell_{ik},\qquad
\B=\frac12(\At+{}^t\!\At).
\end{equation}
So $\B$ is a global section of $\Sym^2\!E\otimes\KM$. We remark that $\B$ is not the nonnegative quadratic torsion tensor commonly denoted by $B$ in \cite{ZhaoZ23, ZhaoZ24, ZhaoZheng}, and we use the calligraphic notation here. All alternating-symbol identifications below retain the canonical factor. Put
\begin{equation}\label{eq:QSG-def}
\begin{split}
D_{a\ell j}&=\B_{a\ell,\bar j},\qquad S=\Sym_3D,\qquad G=\Phi^*-2\rho,\\
Q_{a\ell j}&=\frac14\sum_{i,k}\eps_{ika}
\bigl(R_{i\bar jk\bar\ell}+R_{i\bar\ell k\bar j}\bigr).
\end{split}
\end{equation}
Here the barred derivative index in $D$ is raised by the metric, so $S\in\Sym^3E\otimes\KM$. We regard $Q$ as a homomorphism $\Sym^2\!\bar E\longrightarrow E\otimes\KM$. Its rank will always mean the rank of this homomorphism, rather than the matrix rank of one of its quadratic components.

\begin{lemma}\label{lem:algebra}
The tensors above satisfy
\begin{align}
&T^\ell_{ik}=\sum_a\eps_{ika}\B_{a\ell}
+\frac12(\delta_{i\ell}\eta_k-\delta_{k\ell}\eta_i),
\quad \quad  |T|^2=2|\B|^2+|\eta|^2,\label{eq:torsion-split}\\
&S=-2\Sym_3Q, \quad \quad  \tr G=0,\label{eq:SG}\\
& G_{rj}=2\sum_{a,\ell}\eps_{a\ell r}Q_{a\ell j},\qquad
Q_{a\ell j}=-\frac12S_{a\ell j}
+\frac16\sum_b(\eps_{a\ell b}G_{bj}+\eps_{ajb}G_{b\ell}).\label{eq:Q-decomp}
\end{align}
At any point where $\rank Q\leq1$, one has
\begin{equation}\label{eq:rank-one-bound}
|\tr(G^2)|\leq3|S|^2.
\end{equation}
At such a point $Q\ne0$ implies $S\ne0$.
\end{lemma}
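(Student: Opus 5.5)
The plan is to verify every identity by direct linear algebra in a unitary frame at a point, using only the definitions \eqref{eq:AB-def}, \eqref{eq:QSG-def}, the mixed Bianchi identity \eqref{eq:mixed-bianchi}, the polarization identity \eqref{eq:polarization}, and Lemma~\ref{lem:ricci}.

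\emph{The torsion split \eqref{eq:torsion-split}.} Since $T^\ell_{ik}$ is antisymmetric in $(i,k)$ and $\frac12\sum_{i,k}\eps_{ika}\eps_{ikb}=\delta_{ab}$, definition \eqref{eq:AB-def} inverts to $T^\ell_{ik}=\sum_a\eps_{ika}\At_{a\ell}$.
\begin{itemize}
\item Write $\At=\B+\At^-$, where $\At^-$ is antisymmetric. Parametrize $\At^-_{a\ell}=\sum_b\eps_{a\ell b}w_b$.
\item Computing $\eta_i=\sum_kT^k_{ki}=\sum_{k,a}\eps_{kia}\At_{ak}$, the symmetric part $\B$ drops out. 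This gives $w$ as a fixed multiple of $\eta$ (I expect $w=-\eta/2$, up to sign).
\item Substituting back and using $\sum_a\eps_{ika}\eps_{a\ell b}=\delta_{i\ell}\delta_{kb}-\delta_{ib}\delta_{k\ell}$ gives the $\delta\eta$ terms.
\item For the norm, $|T|^2=2|\At|^2=2|\B|^2+2|\At^-|^2$, and $2|\At^-|^2=4|w|^2=|\eta|^2$.
\end{itemize}

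\emph{The identity $S=-2\Sym_3Q$ and $\tr G=0$.}
\begin{itemize}
\item By \eqref{eq:mixed-bianchi}, $\At_{a\ell,\bar j}=-\sum_{i,k}\eps_{ika}R_{i\bar jk\bar\ell}$.
\item The antisymmetric part of $\At$ is $\eps$ contracted with a multiple of $\eta$. Its $\bar j$-derivative is $\eps_{a\ell b}$ times a multiple of $\Phi$, which is killed by full symmetrization. Hence $S=\Sym_3\At_{a\ell,\bar j}$.
\item Set $X_{i\ell jk}=R_{i\bar jk\bar\ell}+R_{i\bar\ell k\bar j}$. Symmetrizing in $(\ell,j)$ gives $\Sym_3(\eps R)=\frac12\Sym_3(\eps X)$, hence $S=-2\Sym_3Q$ from $Q=\frac14\eps X$.
\item Polarization says exactly that $X$ is antisymmetric in $(i,k)$, so no information is lost in $Q$.
\item The relation $\tr G=\tr\Phi-2\tr\rho=0$ is immediate from Lemma~\ref{lem:ricci}.
\end{itemize}

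\emph{The decomposition \eqref{eq:Q-decomp}.}
\begin{itemize}
\item Contract $Q$ with $\eps_{a\ell r}$ and use the $\eps\eps$ identity. This expresses $\sum\eps_{a\ell r}Q_{a\ell j}$ through the contractions $\rho$, $\rho^{(2)}$, $C$, $D$ of the proof of Lemma~\ref{lem:ricci}.
\item Insert $C=\rho-\Phi$, $D=\rho-\Phi^*$ and $\rho^{(2)}=-3\rho+\Phi+\Phi^*$ to get $\frac12(\Phi^*-2\rho)=\frac12G$.
\item The second formula is checked by a uniqueness argument. $Q$ lies in $E\otimes\Sym^2E$ ($\dim 18$), and the map $Q\mapsto(\Sym_3Q,\ \eps\text{-contraction})$ is injective into $\Sym^3E\oplus\{\text{traceless }3\times3\}$ ($10+8$). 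This follows from the $GL_3$-decomposition $E\otimes\Sym^2E=\Sym^3E\oplus\mathfrak{sl}$-type piece.
\item It then suffices to check that the right-hand side has full symmetrization $-\frac12S$ (the $\eps$-terms vanish under $\Sym_3$) and $\eps$-contraction $\frac12G$. The latter is a short $\eps\eps$ computation using $\tr G=0$.
\end{itemize}

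\emph{The rank bound \eqref{eq:rank-one-bound}; this is the main obstacle.} If $\rank Q\le1$, then $Q_{a\ell j}=v_aq_{\ell j}$ with $q$ symmetric. Rotate the frame so that $v=|v|e_1$.
\begin{itemize}
\item Then $G_{rj}=2|v|\sum_\ell\eps_{1\ell r}q_{\ell j}$. So $G$ has only rows $2,3$, namely $(G_{2j})=-2|v|q_{3j}$ and $(G_{3j})=2|v|q_{2j}$.
\item Hence $\tr G^2=\sum_{r,j}G_{rj}G_{jr}$ involves only $q_{22},q_{23},q_{33}$: explicitly $4|v|^2(q_{23}^2-q_{22}q_{33})$, up to sign.
\item On the other side, $|S|^2=4|\Sym_3(v\otimes q)|^2$. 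Its components with indices in $\{2,3\}$ plus one index $1$ are $\frac13|v|q_{\ell j}$, with multiplicities.
\item Bounding $|q_{23}^2-q_{22}q_{33}|\le\frac12(|q_{22}|^2+|q_{33}|^2)+|q_{23}|^2$ against these multiplicity-weighted terms should give the constant $3$. I expect pinning down the sharp constant via the combinatorial weights of $\Sym_3$ to be where care is needed.
\end{itemize}

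Finally, if $Q=v\otimes q\neq0$, then $S=-2\Sym_3(v\otimes q)$ corresponds to the product of a nonzero linear form and a nonzero quadratic form in the polynomial ring. Since that ring has no zero divisors, $S\neq0$.
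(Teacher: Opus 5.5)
Your plan is correct and follows the paper's proof essentially step by step: $\eps\eps$-inversion, the symmetric/skew split of $\At$, symmetrizing the mixed Bianchi identity, the $\eps$-contraction giving $G$, uniqueness from zero full symmetrization plus zero $\eps$-contraction, and the rank-one frame computation. For the uniqueness step, the paper argues directly rather than through the $GL_3$-decomposition: zero $\eps$-contraction makes the tensor symmetric in its first two slots, hence fully symmetric and zero.

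Fix two constants when you carry it out. With $\At^-_{a\ell}=\sum_b\eps_{a\ell b}w_b$ one gets $w=\eta/2$. More importantly, $\tr G^2=8|v|^2(q_{23}^2-q_{22}q_{33})$, not $4|v|^2(\cdots)$. Combined with your bound on $|q_{23}^2-q_{22}q_{33}|$ and $|S|^2\geq|v|^2\bigl(\frac43(|q_{22}|^2+|q_{33}|^2)+\frac83|q_{23}|^2\bigr)$, this gives exactly $3|S|^2$. Equality holds at $q=\diag(0,1,-1)$, so there is no slack to absorb a wrong factor.
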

\begin{proof}
The identity $\sum_{i,k}\eps_{ika}\eps_{ikb}=2\delta_{ab}$ gives
$T^\ell_{ik}=\sum_a\eps_{ika}\At_{a\ell}$. The definition of $\eta$ gives
\[
\eta_1=\At_{23}-\At_{32},\quad
\eta_2=\At_{31}-\At_{13},\quad
\eta_3=\At_{12}-\At_{21},
\]
and hence $\At_{a\ell}=\B_{a\ell}+\frac12\sum_r\eps_{a\ell r}\eta_r$. Substitution proves the first identity in \eqref{eq:torsion-split}. The symmetric and skew-symmetric parts of $\At$ are orthogonal, and the squared norm of its skew part is $|\eta|^2/2$. This proves the norm identity. By \eqref{eq:mixed-bianchi}, we have
\begin{equation}\label{eq:D-from-R}
D_{a\ell j}=-\frac12\sum_{i,k}
\bigl(\eps_{ika}R_{i\bar jk\bar\ell}+\eps_{ik\ell}R_{i\bar jk\bar a}\bigr).
\end{equation}
Taking the full symmetrization gives $S=-2\Sym_3Q$. Let us write
$\widetilde G_{rj}=2\sum_{a,\ell}\eps_{a\ell r}Q_{a\ell j}$. Since $Q_{a\ell j}=Q_{aj\ell}$, the trace of $\widetilde G$  is zero. Contracting the two alternating symbols gives
\[
\widetilde G_{rj}=\frac12(C_{r\bar j}-\rho_{r\bar j}
+\rho^{(2)}_{r\bar j}-D_{r\bar j})=\Phi^*_{r\bar j}-2\rho_{r\bar j},
\]
where the last equality uses Lemma~\ref{lem:ricci} and the contractions in its proof. For a trace-free matrix $G$, put
\[
J(G)_{a\ell j}=\frac16\sum_b(\eps_{a\ell b}G_{bj}+\eps_{ajb}G_{b\ell}).
\]
This tensor is symmetric in $\ell,j$, has zero full symmetrization, and satisfies
$2\sum_{a,\ell}\eps_{a\ell r}J(G)_{a\ell j}=G_{rj}$. Thus
$V=Q+S/2-J(G)$ has zero full symmetrization and zero alternating-symbol contraction. The latter makes $V$ symmetric in its first two positions; it is already symmetric in its last two. Hence it is fully symmetric and therefore zero. This proves \eqref{eq:Q-decomp}.

It remains to prove \eqref{eq:rank-one-bound}. When $Q\ne0$ and its output has dimension one, choose a unitary frame with that output along the first axis. After including the unit canonical factor, write
\[
Q_{1\ell j}=q_{\ell j},\qquad Q_{2\ell j}=Q_{3\ell j}=0,
\]
where $q$ is symmetric. Formula~\eqref{eq:Q-decomp} gives
\[
G_{1j}=0,\quad G_{2j}=-2q_{3j},\quad G_{3j}=2q_{2j},\qquad
\tr(G^2)=8(q_{23}^2-q_{22}q_{33}).
\]
The possibly nonzero symmetric components of $S$ are
\[
\begin{gathered}
S_{111}=-2q_{11},\quad S_{112}=-\frac43q_{12},\quad
S_{113}=-\frac43q_{13},\\
S_{122}=-\frac23q_{22},\quad S_{123}=-\frac23q_{23},\quad
S_{133}=-\frac23q_{33}.
\end{gathered}
\]
Their ordered multiplicities are $1,3,3,3,6,3$, respectively. Therefore
\[
|S|^2=4|q_{11}|^2+\frac{16}{3}(|q_{12}|^2+|q_{13}|^2)
+\frac43(|q_{22}|^2+|q_{33}|^2)+\frac83|q_{23}|^2.
\]
This is positive when $q\ne0$, and
\[
|\tr(G^2)|\leq8|q_{23}|^2+8|q_{22}||q_{33}|
\leq8|q_{23}|^2+4|q_{22}|^2+4|q_{33}|^2\leq3|S|^2.
\]
The case $Q=0$ follows at once from \eqref{eq:SG} and \eqref{eq:Q-decomp}. For abstract arrays the constant is sharp, as is seen by taking $q=\diag(0,1,-1)$. This completes the proof of the lemma.
\end{proof}

For our later use, let us introduce the following notations:
\begin{equation}\label{eq:N-w-def}
\Nt_{abp}=\B_{ab,p}-\frac12\eta_p\B_{ab},\qquad
W=\partial\eta,\qquad w_a=\frac12\sum_{i,k}\eps_{ika}W_{ik}.
\end{equation}
Thus $w$ is a section of $E\otimes\KM$. Let us also write
\[
I_{\B}(v)_{abc}=\frac13(\B_{ab}v_c+\B_{ac}v_b+\B_{bc}v_a).
\]

\begin{lemma}\label{lem:differential}
Under the above notation,
\begin{align}
& Q_{a\ell j,p}=\frac12\eta_pQ_{a\ell j}
+\sum_{b,t}\eps_{pbt}\B_{at}Q_{b\ell j},\label{eq:Q-transport}\\
& S_{abc,p}=\frac12\eta_pS_{abc}
-2\Sym_{abc}\sum_{u,v}\eps_{puv}\B_{av}Q_{ubc},\label{eq:S-transport}\\
& \Sym_{abc}\Nt_{abp,\bar c}=-\frac12 I_{\B}((G^*)_p)_{abc},
\quad \quad  \sum_p\Nt_{app}=\frac12w_a.\label{eq:N-mixed}
\end{align}
Here $(G^*)_p$ is its $p$-th row with the barred index raised. Define $Z$ by
\[
{}^tZ=2G+\frac32\Phi-\frac\chi2I,
\qquad L(Z)_{abq}=\frac13\sum_r(\eps_{qbr}Z_{ar}+\eps_{qar}Z_{br}).
\]
Then $\tr Z=0$ and
\begin{equation}\label{eq:hook-norm}
D=S+L(Z),\qquad |D|^2=|S|^2+\frac23|Z|^2.
\end{equation}
In particular, with
\begin{equation}\label{eq:FH-def}
F=\frac12(\Phi+\Phi^*),\quad \Psi=\frac12(\Phi-\Phi^*),\quad
H=\frac12(G+G^*),\quad F_0=F-\frac\chi3I,
\end{equation}
one has
\begin{equation}\label{eq:hook-norm-FH}
|D|^2=|S|^2+\frac32\left|F_0+\frac43H\right|^2+\frac16|\Psi|^2,
\end{equation}
and
\begin{equation}\label{eq:mixed-lower-bound}
|\nabla''\Nt|^2\geq|\Sym_3\nabla''\Nt|^2\geq\frac1{12}|\B|^2|G|^2.
\end{equation}
\end{lemma}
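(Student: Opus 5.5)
The plan is to derive every identity from the two Bianchi-type relations available under vanishing holomorphic sectional curvature: the mixed Bianchi identity \eqref{eq:mixed-bianchi}, which expresses $\nabla''T$ through $R$, and the $(1,0)$-part of the first Bianchi identity, which expresses $\nabla'R$ through $T$ and $R$. Throughout we work at a point in a unitary frame and use the decomposition \eqref{eq:torsion-split} of $T$ into $\B$ and $\eta$.

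\emph{Identities \eqref{eq:Q-transport} and \eqref{eq:S-transport}.} For the Chern connection the second Bianchi identity reads
\[
R_{i\bar jk\bar\ell,p}-R_{p\bar jk\bar\ell,i}=-\sum_rT^r_{pi}R_{r\bar jk\bar\ell}.
\]
I will contract this with $\eps_{ika}$ and symmetrize in $(\ell,j)$. Two parts of the computation need care.
\begin{itemize}
\item The cyclic-difference term $R_{p\bar j k\bar\ell,i}$ does not obviously reduce to $Q$. My plan is to use \eqref{eq:polarization}, which makes the part of $R$ symmetric in its two unbarred indices and its two barred indices vanish. Combined with the $\eps$-contraction, this should re-express the derivative terms through $Q_{\cdot\ell j,p}$ and contractions of the form $\sum_i R_{i\bar j i\bar\ell}$. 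The latter are handled by Lemma~\ref{lem:ricci}.
\item In the torsion term, I substitute \eqref{eq:torsion-split}. The $\eta$-part should produce $\frac12\eta_pQ$, and the $\B$-part should produce $\sum\eps_{pbt}\B_{at}Q_{b\ell j}$.
\end{itemize}
Formula \eqref{eq:S-transport} then follows by applying $\Sym_3$, using $S=-2\Sym_3Q$ from \eqref{eq:SG}, together with the symmetry of $Q$ in its last two slots. I expect this step to be the main bookkeeping obstacle: getting the right coefficient and verifying that the $\rho$-type traces cancel.

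\emph{Identity \eqref{eq:N-mixed}.} For the first part, I compute $\Nt_{abp,\bar c}=\B_{ab,p\bar c}-\frac12\eta_{p,\bar c}\B_{ab}-\frac12\eta_p\B_{ab,\bar c}$.
\begin{itemize}
\item I commute derivatives, $\B_{ab,p\bar c}=\B_{ab,\bar cp}+{}$curvature terms; the curvature acts on $\Sym^2E\otimes\KM$, so the $\KM$ factor contributes a $\rho$-trace.
\item In the term $\B_{ab,\bar c p}=D_{abc,p}$, I insert $D=S+L(Z)$. The symmetrization in $(a,b,c)$ kills $L(Z)$ but not its derivative. I will therefore rely on the Bianchi identity again, or equivalently apply $\nabla'$ to \eqref{eq:D-from-R} and use \eqref{eq:Q-transport}, which expresses $S_{,p}$ algebraically.
\item After symmetrization, all terms except those linear in $\B$ times $G$ must cancel, leaving $-\frac12 I_\B((G^*)_p)$.
\end{itemize}
For the trace identity, I contract $\B_{ap,p}$ using the definition of $\At$ and the $(1,0)$ Bianchi identity for torsion, $\sum\mathfrak{S}\,(T_{,p}+TT)=0$. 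This expresses the divergence of $\B$ through $\partial\eta$, i.e.\ through $w$, plus the term $\frac12\eta_p\B_{ap}$ that is subtracted in the definition of $\Nt$.

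\emph{Hook decomposition \eqref{eq:hook-norm}.} Since $\B$ is symmetric, $D$ lies in $\Sym^2E\otimes E$. This space splits orthogonally as $\Sym^3E\oplus(\text{hook part})$, and the hook part is isomorphic to trace-free $3\times3$ matrices via $L$. I will compute the hook part of $D$ by taking the alternating contraction $\sum\eps_{qbr}D_{abq}$, evaluating it through \eqref{eq:D-from-R}, and applying Lemma~\ref{lem:ricci}; this should identify ${}^tZ=2G+\frac32\Phi-\frac\chi2I$. Tracelessness follows from $\tr G=0$ and $\tr\Phi=\chi$. The norm factor $\frac23$ comes from a direct count on a basis element. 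To obtain \eqref{eq:hook-norm-FH}, I substitute $G=\Phi^*-2\rho$ and eliminate $\rho$ so that ${}^tZ$ is written through $F_0,H,\Psi$, then expand $|Z|^2$ using the orthogonality of Hermitian and anti-Hermitian parts.

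\emph{Bound \eqref{eq:mixed-lower-bound}.} The first inequality holds because $\Sym_3$ is an orthogonal projection. For the second, by \eqref{eq:N-mixed} it suffices to show
\[
|I_\B(v)|^2\ge\frac13|\B|^2|v|^2
\]
for each $p$, and then sum over $p$ to get $\frac1{12}|\B|^2|G|^2$. To prove it, expand $|I_\B(v)|^2=\frac19(3|\B|^2|v|^2+6|\B v|^2)$, where $|\B v|^2\ge0$, giving at least $\frac13|\B|^2|v|^2$. The multiplicity conventions for the norm must be checked carefully against the convention fixed in \S2.
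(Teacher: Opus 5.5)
Your overall route is the paper's. The mixed identity comes from the commutator and the curvature of $\Sym^2E\otimes\KM$, the divergence from the torsion Bianchi identity, and the hook part from the alternating contraction of \eqref{eq:D-from-R} together with Lemma~\ref{lem:ricci}. The final bound comes from the expansion of $|I_\B(v)|^2$, which you have right.

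\emph{The $Q$-transport does not close as you describe it.} After symmetrizing the Bianchi identity in $(j,\ell)$ and contracting with $\frac12\eps_{ika}$, you rewrite $\frac12(R_{p\bar jk\bar\ell}+R_{p\bar\ell k\bar j})_{,i}=\sum_b\eps_{pkb}Q_{b\ell j,i}$. The $\eps\eps$ contraction then leaves the term $\delta_{ap}\sum_bQ_{b\ell j,b}$. This is the divergence of $Q$, an unknown first-order quantity, not an algebraic Ricci trace. Lemma~\ref{lem:ricci} cannot dispose of it, and no $\rho$-terms arise anywhere in this step. What you actually obtain is
\[
Q_{a\ell j,p}+\delta_{ap}\sum_bQ_{b\ell j,b}
=\frac12\eta_pQ_{a\ell j}+\sum_{b,t}\eps_{pbt}\B_{at}Q_{b\ell j}
+\frac12\delta_{ap}\sum_b\eta_bQ_{b\ell j}.
\]
The missing step is to trace this over $a=p$. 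The $\B$-term drops because $\B$ is symmetric, which gives $\sum_bQ_{b\ell j,b}=\frac12\sum_b\eta_bQ_{b\ell j}$. Substituting back yields \eqref{eq:Q-transport}. The paper performs the same elimination differently: it solves the three cyclic Bianchi instances simultaneously for the skew form $r_{ik}=R(e_i,\bar Y,e_k,\bar Y)$.

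\emph{Symmetrization and the mixed identity.} Your remark that $\Sym_{abc}$ kills $L(Z)$ but not its derivative is false. Acting on it would send you into computing $\nabla'Z$, which involves $\nabla'\Phi$ and is not algebraically controlled. The barred slot of $D$ is raised by the parallel metric, so $\Sym_{abc}$ is a parallel projection. Hence $\Sym_{abc}D_{abc,p}=S_{abc,p}$ exactly, and $L(Z)$ contributes nothing.

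The cancellation then works for the following reason. The symmetrized curvature action $\Sym_{abc}(R^{\B}_{p\bar c}\B)_{ab}$ sees only the part of $R_{p\bar cr\bar a}$ symmetric in $(a,c)$, which by polarization is $\sum_u\eps_{pru}Q_{uac}$. This produces $-2\Sym_{abc}\sum_{u,v}\eps_{puv}\B_{av}Q_{ubc}-I_\B(\rho_p)$. That cancels the quadratic term of \eqref{eq:S-transport} and leaves $I_\B(\rho_p)-\frac12I_\B(\Phi_p)=-\frac12I_\B((G^*)_p)$.

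\emph{The constant $\frac23$.} Checking it on one basis element is enough only after invoking Schur's lemma for the irreducible $U(3)$-module of trace-free matrices. Otherwise, compute directly $|L(Z)|^2=\frac19\bigl(4|Z|^2+2(|Z|^2-|\tr Z|^2)\bigr)$.
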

\begin{proof}
We first derive the transport of $Q$. The second Bianchi identity gives us
\begin{equation}\label{eq:exterior-bianchi}
R_{i\bar jk\bar\ell,p}-R_{p\bar jk\bar\ell,i}
=-\sum_tT^t_{pi}R_{t\bar jk\bar\ell}, \ \ \ \ \ \ \ \ \  \forall  \ 1\leq i,j,k,\ell, p\leq 3.
\end{equation}
For example, in a connection-normal unitary frame at a point, $[e_p,e_i]=-T(e_p,e_i)$; this fixes the sign in \eqref{eq:exterior-bianchi}. Fix a formal barred vector $\bar Y$ whose coefficients are covariantly constant at the point, and put $r_{ik}=R(e_i,\bar Y,e_k,\bar Y)$. By \eqref{eq:polarization}, $r_{ik}=-r_{ki}$. The three cyclic instances of \eqref{eq:exterior-bianchi} give
\begin{equation}\label{eq:r-transport}
2r_{ik,p}=\sum_t(T^t_{ip}r_{tk}-T^t_{ki}r_{tp}+T^t_{pk}r_{ti}).
\end{equation}
Set $q_a=\frac12\sum_{i,k}\eps_{ika}r_{ik}$, so that $r_{ik}=\sum_a\eps_{ika}q_a$. Multiplying \eqref{eq:r-transport} by $\eps_{ika}/4$ and summing yields
\[
q_{a,p}=\sum_t\At_{at}r_{tp}-\frac12\delta_{ap}\sum_{k,t}\At_{kt}r_{tk}.
\]
The last contraction is $-\sum_b\eta_bq_b$, whereas the skew part of the first term is
$\frac12(\eta_pq_a-\delta_{ap}\sum_b\eta_bq_b)$. Hence
\[
q_{a,p}=\frac12\eta_pq_a+\sum_{b,t}\eps_{pbt}\B_{at}q_b.
\]
Comparing its quadratic coefficients proves \eqref{eq:Q-transport}; symmetrization gives \eqref{eq:S-transport}.

Next consider the curvature of $\Sym^2\!E\otimes\KM$. Its action on $\B$ is
\[
(R^{\B}_{p\bar q}\B)_{ab}
=\sum_r(R_{p\bar qr\bar a}\B_{rb}+R_{p\bar qr\bar b}\B_{ar})
-\rho_{p\bar q}\B_{ab}.
\]
The last term is the curvature of the canonical factor. By polarization and the definition of $Q$,
\[
\frac12(R_{p\bar cr\bar a}+R_{p\bar ar\bar c})
=\sum_u\eps_{pru}Q_{uac}.
\]
Consequently,
\begin{equation}\label{eq:B-curv-sym}
\Sym_{abc}(R^{\B}_{p\bar c}\B)_{ab}
=-2\Sym_{abc}\sum_{u,v}\eps_{puv}\B_{av}Q_{ubc}
-I_{\B}(\rho_p)_{abc}.
\end{equation}
The mixed Hessian commutator has no torsion term. Differentiating the definition of $\Nt$ gives
\[
\Nt_{abp,\bar q}=D_{abq,p}-(R^{\B}_{p\bar q}\B)_{ab}
-\frac12\Phi_{p\bar q}\B_{ab}-\frac12\eta_pD_{abq}.
\]
After symmetrizing in $a,b,q$, the nonlinear terms cancel by \eqref{eq:S-transport} and \eqref{eq:B-curv-sym}. The remaining expression is
$I_{\B}(\rho_p)-\frac12I_{\B}(\Phi_p)=-\frac12I_{\B}((G^*)_p)$.

The pure torsion Bianchi identity is
\[
\sum_{\mathrm{cyclic}}\bigl((\nabla_XT)(Y,Z)+T(T(X,Y),Z)\bigr)=0.
\]
Taking $(X,Y,Z)=(e_s,e_i,e_k)$, contracting the output with $s$, and summing gives
\[
\sum_sT^s_{ik,s}=W_{ik},\qquad
W_{ik}=\eta_{k,i}-\eta_{i,k}+\sum_rT^r_{ik}\eta_r.
\]
Contract with $\eps_{ika}/2$. If $c_a=\sum_{p,r}\eps_{apr}\eta_{r,p}$, this gives
\[
\sum_p\At_{ap,p}=w_a,\qquad
w_a=c_a+\sum_s\B_{as}\eta_s,\qquad
\sum_p\At_{ap,p}=\sum_p\B_{ap,p}+\frac12c_a.
\]
Combining the last two equations proves the divergence formula in \eqref{eq:N-mixed}.

To compute the remaining part of $D$, set $Z_{rb}=\sum_{a,s}\eps_{bas}D_{sra}$. Its trace is zero. Substitute \eqref{eq:D-from-R}. The contraction of the first summand is
$\frac12(\rho^{(2)}-D)_{b\bar r}=(G+\Phi/2)_{b\bar r}$, where $D$ here denotes the auxiliary curvature contraction in the proof of Lemma~\ref{lem:ricci}. Before multiplication by $-1/2$, the second summand gives
\[
D_{b\bar r}-\rho_{b\bar r}-\rho^{(2)}_{b\bar r}+C_{b\bar r}
+\delta_{br}(\tr\rho-\tr C)
=(4\rho-2\Phi-2\Phi^*+\chi I)_{b\bar r}.
\]
It therefore contributes $(G+\Phi-\chi I/2)_{b\bar r}$. This proves the stated formula for ${}^tZ$.

The tensor $L(Z)$ has zero full symmetrization and has the same alternating-symbol contraction as $D$. Thus $D-S-L(Z)$ is fully symmetric with zero full symmetrization, and is zero. The ordered norm calculation gives
\[
|L(Z)|^2=\frac19\bigl(4|Z|^2+2(|Z|^2-|\tr Z|^2)\bigr)=\frac23|Z|^2.
\]
It is orthogonal to $S$, proving \eqref{eq:hook-norm}. Since $G=H-\Psi$,
$\,{}^tZ=\frac32F_0+2H-\frac12\Psi$. Hermitian and skew-Hermitian matrices are orthogonal for the real part of the Hermitian product, so \eqref{eq:hook-norm-FH} follows.

Finally, direct expansion of the three symmetric summands gives
\[
|I_{\B}(v)|^2=\frac13|\B|^2|v|^2+\frac23|\B\bar v|^2
\geq\frac13|\B|^2|v|^2.
\]
Here the barred vector in the matrix product records the Hermitian contraction of the vector slot; only the displayed lower bound is needed. Apply this to each raised row of $G^*$ in \eqref{eq:N-mixed}. The factor $1/2$ contributes $1/4$ after taking the squared norm. Symmetrization is an orthogonal projection, so it does not increase the norm. This proves \eqref{eq:mixed-lower-bound}.
\end{proof}

\vspace{0.3cm}

\section{The integral identities}\label{sec:integral}

From this section onward, $M$ is compact and connected, without boundary. All integrals are taken with respect to the original volume $dV=\omega^3/6$, unless a measure is displayed explicitly. On real functions put
\begin{equation}\label{eq:operators}
\Delta u=g^{i\bar j}\partial_i\partial_{\bar j}u,\quad
\LL u=\Delta u-\re\ip{\partial u}{\eta},\quad
\PP u=-\LL u+\left(\frac\chi2-\frac{|\eta|^2}{4}\right)u,
\quad a_u=\partial u-\frac u2\eta.
\end{equation}

\begin{lemma}\label{lem:stokes}
For real smooth functions $f,u,\vartheta$ on $M$,
\begin{align}
\int_M f\LL u&=-\re\int_M\ip{\partial u}{\partial f},\label{eq:stokes-L}\\
\int_M f\chi&=\int_M f|\eta|^2-\re\int_M\ip{\eta}{\partial f},\label{eq:stokes-chi}\\
\int_M\vartheta u\PP u&=\int_M\vartheta|a_u|^2
+\re\int_M u\ip{\partial\vartheta}{a_u}.\label{eq:stokes-P}
\end{align}
In particular, $\int_Mu\PP u=\int_M|a_u|^2\geq0$. If $\psi$ is smooth and real, $\PP\psi=F$ with $F\geq0$, and $v=\max\{-\psi,0\}$, then
\begin{equation}\label{eq:negative-part}
0\leq\int_MvF=-\int_M|a_v|^2\leq0.
\end{equation}
\end{lemma}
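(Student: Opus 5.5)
The plan is to reduce everything to one divergence formula for the Chern connection with the torsion correction governed by $\eta$, and then derive the three identities by substitution and real parts. Concretely, let $\beta=\sum_i\beta_{\bar i}\bar\varphi_i$ be a smooth $(0,1)$-form. The $(2,3)$-form $\partial(\beta\wedge\omega^2)$ is exact (its $\bar\partial$-part vanishes for bidegree reasons, so it equals $d(\beta\wedge\omega^2)$), and
\[
\partial(\beta\wedge\omega^2)=\partial\beta\wedge\omega^2-\beta\wedge\partial\omega^2=\partial\beta\wedge\omega^2+\beta\wedge\eta\wedge\omega^2 .
\]
In a unitary frame, $\partial\beta\wedge\omega^2$ is a fixed constant multiple of $\sum_i\beta_{\bar i,i}\,dV$. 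Here the $(1,1)$-part of the Chern covariant derivative of a $(0,1)$-form carries no torsion term, because $\nabla''=\bar\partial$ and dually $\nabla'$ on $\bar E^*$ agrees with $\partial$ on $(0,1)$-forms modulo the $(2,0)$ torsion, which does not enter the trace. The second term is the same constant multiple of $-\sum_i\eta_i\beta_{\bar i}\,dV$. Stokes then gives
\[
\int_M\sum_i\beta_{\bar i,i}=\int_M\sum_i\eta_i\beta_{\bar i},
\]
and conjugating gives the analogous formula for $(1,0)$-forms. Fixing this sign carefully (it is dictated by $\partial\omega^2=-\eta\wedge\omega^2$) is the only delicate point of the whole lemma. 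I would check it on the trivial case $\beta=\bar\partial u$ with $\eta=0$, where it must reduce to $\int\Delta u=0$.

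For \eqref{eq:stokes-L}, apply the formula to $\beta=f\,\bar\partial u$. Then
\[
\sum_i\beta_{\bar i,i}=f\Delta u+\sum_i f_iu_{\bar i}=f\Delta u+\ip{\partial f}{\partial u},
\qquad
\sum_i\eta_i\beta_{\bar i}=f\ip{\eta}{\partial u}.
\]
Taking real parts, and using that $f,u$ are real, yields $\int f\Delta u-\re\int f\ip{\partial u}{\eta}=-\re\int\ip{\partial u}{\partial f}$, which is exactly \eqref{eq:stokes-L}.

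For \eqref{eq:stokes-chi}, apply the conjugate formula to the $(1,0)$-form $f\eta$. Since $\chi=\tr\Phi=\sum_i\eta_{i,\bar i}$, we have $\sum_i(f\eta_i)_{,\bar i}=f\chi+\ip{\eta}{\partial f}$, and the torsion term becomes $f|\eta|^2$. Taking real parts, with $\chi$ real by Lemma~\ref{lem:ricci}, gives \eqref{eq:stokes-chi}.

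For \eqref{eq:stokes-P}, expand $\vartheta u\PP u=-\vartheta u\LL u+(\chi/2-|\eta|^2/4)\vartheta u^2$. Apply \eqref{eq:stokes-L} with $f=\vartheta u$, and apply \eqref{eq:stokes-chi} with $f=\vartheta u^2/2$. Then compare with the expansion
\[
\vartheta|a_u|^2=\vartheta|\partial u|^2-\vartheta u\re\ip{\partial u}{\eta}+\tfrac14\vartheta u^2|\eta|^2 .
\]
The terms involving $\partial\vartheta$ collect as $u\re\ip{\partial\vartheta}{\partial u}-\tfrac12u^2\re\ip{\partial\vartheta}{\eta}=u\re\ip{\partial\vartheta}{a_u}$, and the $|\eta|^2$-terms combine to $\tfrac14$. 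This is bookkeeping. Setting $\vartheta=1$ gives $\int u\PP u=\int|a_u|^2\ge 0$.

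For \eqref{eq:negative-part}, note that $v=\max\{-\psi,0\}$ is Lipschitz, and $v=-\psi$ on the open set $\{\psi<0\}$, while $v$ and $\partial v$ vanish almost everywhere off it. Hence, a.e.,
\[
\int vF=\int v\PP\psi=\int(-\psi)\,\PP(-\psi)\,\mathbbm 1_{\{\psi<0\}}.
\]
Then use the identity $\int v\PP w=\int\ip{a_v}{a_w}$-type form of \eqref{eq:stokes-P} (polarized, valid for Lipschitz $v$ by mollification) with $w=\psi$, where $a_\psi=-a_v$ a.e.\ on the support of $v$. This gives $\int vF=-\int|a_v|^2$. Since $v,F\ge0$, the left side is nonnegative, which gives both inequalities. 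The justification of the polarized identity for Lipschitz $v$ is routine by density in $W^{1,2}$.
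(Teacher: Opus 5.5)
Your proof is correct and follows the paper's route: one $\eta$-corrected divergence identity, then \eqref{eq:stokes-L} and \eqref{eq:stokes-chi} by taking real parts, \eqref{eq:stokes-P} by completing the square, and \eqref{eq:negative-part} by extending the bilinear identity to $W^{1,2}$ and testing $\PP\psi=F$ against $v$, using $a_v=-a_\psi$ on $\{\psi<0\}$. The only difference is that you derive the divergence identity invariantly, from Stokes on $\beta\wedge\omega^2$ and $\partial\omega^2=-\eta\wedge\omega^2$, whereas the paper computes $\sum_j\partial_{\bar j}(bg^{i\bar j})=-b\sum_\ell g^{i\bar\ell}\bar\eta_\ell$ in holomorphic charts and patches with a partition of unity.

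Two small fixes are needed. First, your intermediate display $\int vF=\int(-\psi)\PP(-\psi)\,\mathbf 1_{\{\psi<0\}}$ is off by a sign, although you never use it. Second, polarizing $\int u\PP u=\int|a_u|^2$ yields $\int v\PP\psi=\re\int\ip{a_v}{a_\psi}$ only once you note that $\PP$ is symmetric. This holds because the right side of \eqref{eq:stokes-L} is symmetric in $f,u$; alternatively, expand $\int v\PP\psi$ directly with \eqref{eq:stokes-L} and \eqref{eq:stokes-chi}.
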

\begin{proof}
Let $b=\det(g_{i\bar j})$ in a holomorphic coordinate chart, with inverse coefficients defined by $\sum_jg_{k\bar j}g^{i\bar j}=\delta_{ki}$. The torsion trace is
\[
\eta_i=\sum_{k,\ell}g^{k\bar\ell}\partial_k g_{i\bar\ell}-\partial_i\log b.
\]
Differentiating the inverse matrix and using the conjugate formula for $\eta$ gives
\begin{equation}\label{eq:volume-div}
\sum_j\partial_{\bar j}(b g^{i\bar j})=-b\sum_\ell g^{i\bar\ell}\bar\eta_\ell.
\end{equation}
Integration by parts, first for compactly supported functions in the chart, now yields
\[
\int_M f\Delta u=-\int_Mg^{i\bar j}(\partial_i u)(\partial_{\bar j}f)
+\int_Mf g^{i\bar j}(\partial_i u)\bar\eta_j.
\]
Taking real parts proves \eqref{eq:stokes-L}. A partition of unity gives the global formula: the additional derivative terms sum to zero because the partition functions sum to one.

Since the barred Chern coefficients vanish in holomorphic coordinates,
$\chi=g^{i\bar j}\partial_{\bar j}\eta_i$. Apply \eqref{eq:volume-div} once more, now to $f\eta_i$, to obtain \eqref{eq:stokes-chi}. Apply \eqref{eq:stokes-L} with test function $\vartheta u$ and \eqref{eq:stokes-chi} with test function $\vartheta u^2$. Completing the square gives \eqref{eq:stokes-P}, including the term containing $\partial\vartheta$.

The unweighted bilinear form is continuous on real $W^{1,2}(M)$ because all its coefficients are bounded. Local mollification and a finite partition of unity therefore extend the identity from smooth functions to $W^{1,2}$. The negative part $v$ is Lipschitz and satisfies $\partial v=-\partial\psi$ on $\{\psi<0\}$ and $\partial v=0$ almost everywhere on its complement. Thus testing $\PP\psi=F$ against $v$ gives the negative of the square identity for $v$, establishing \eqref{eq:negative-part}, and we have completed the proof of the lemma.
\end{proof}

To streamline our later computations, let us introduce more notation:
\begin{equation}\label{eq:energy-notation}
t=|\B|^2,\quad \KB=\B^*\B,\quad \KB_0=\KB-\frac t3I,
\quad r_G=\re\tr(G^2),\quad
(H_w)_{ab}=\frac12(w_{a,\bar b}+w_{b,\bar a}),
\end{equation}
and the notation in \eqref{eq:FH-def} remains in force.

\begin{proposition}\label{prop:energy}
The primitive norm satisfies
\begin{equation}\label{eq:primitive-norm}
\PP t=-|\Nt|^2-|D|^2-\tr\bigl(\KB(F+H)\bigr)+2\re\ip{H_w}{\B}.
\end{equation}
For every real smooth function $\vartheta$,
\begin{equation}\label{eq:weighted-energy}
\int_M\vartheta\mathcal E+\re\int_Mt\ip{\partial\vartheta}{a_t+2\B^*w}=0,
\end{equation}
where
\begin{equation}\label{eq:energy-density}
\begin{split}
\mathcal E={}&|a_t+\B^*w|^2+t|w|^2-|\B^*w|^2+t|\Nt|^2+t|S|^2\\
&+\frac32t\left|F_0+\frac43H+\frac13\KB_0\right|^2
+\frac t6|H-\KB_0|^2
+\frac{\chi t^2-t|\KB_0|^2}{3}-\frac t6r_G.
\end{split}
\end{equation}
\end{proposition}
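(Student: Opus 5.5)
The plan is a Bochner computation for $t=|\B|^2$, followed by multiplying by $\vartheta t$ and integrating by parts with Lemma~\ref{lem:stokes}.

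\emph{Step 1: the pointwise formula \eqref{eq:primitive-norm}.} In a unitary frame,
\[
\Delta t=|\nabla'\B|^2+|D|^2+2\re\sum_p\ip{\B_{ab,p\bar p}}{\B_{ab}}+\bigl(\text{commutator}\bigr).
\]
I would organize it as follows.

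\begin{itemize}
\item Rewrite $\nabla'\B=\Nt+\frac12\eta\otimes\B$. Then $|\nabla'\B|^2$ and the term $-\re\ip{\partial t}{\eta}$ in $\LL t$ combine into $|\Nt|^2-\frac14|\eta|^2t$. This is the piece absorbed by $\frac{|\eta|^2}{4}$ in $\PP$.
\item Write the mixed second derivative as $\Nt_{abp,\bar p}$ plus $\frac12\Phi_{p\bar p}\B_{ab}$ plus first-order terms, exactly as in the proof of Lemma~\ref{lem:differential}. The trace $\frac12\chi t$ then cancels against the $\frac\chi2 t$ in $\PP$.
\item For $\sum_p\Nt_{abp,\bar p}$, decompose the three indices $(a,b,p)$ into the full symmetrization plus a hook part. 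The symmetric part is given by \eqref{eq:N-mixed}. Pairing with $\B$ yields $-\frac12\ip{I_\B((G^*)_p)}{\B}$, i.e.\ a $\tr(\KB H)$ contribution.
\item The hook part is controlled by the divergence $\sum_p\Nt_{app}=\frac12 w_a$ after one barred derivative. Swapping the order of the barred and unbarred derivatives costs a curvature term, and the $\rho$ and $\Phi$ contributions together with $\B$ give $\tr(\KB F)$. The surviving derivative $w_{a,\bar b}$ symmetrizes against $\B$ to $2\re\ip{H_w}{\B}$.
\end{itemize}

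The main bookkeeping risk is in the commutator and curvature terms. To check coefficients, I would use \eqref{eq:B-curv-sym}, \eqref{eq:S-transport} and Lemma~\ref{lem:ricci}, which converts $\rho^{(2)},C,D$ into $\rho,\Phi$.

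\emph{Step 2: the weighted identity \eqref{eq:weighted-energy}.} Multiply \eqref{eq:primitive-norm} by $\vartheta t$ and integrate, using \eqref{eq:stokes-P} with $u=t$:
\[
\int\vartheta t\,\PP t=\int\vartheta|a_t|^2+\re\int t\ip{\partial\vartheta}{a_t}.
\]
The term $2\re\int\vartheta t\ip{H_w}{\B}$ is handled by integrating the barred derivative in $w_{a,\bar b}$ by parts, with the $\bar\eta$ correction coming from \eqref{eq:volume-div}. This produces three kinds of terms:
\begin{itemize}
\item $\re\int t\ip{\partial\vartheta}{\B^*w}$;
\item $\re\int\vartheta\ip{\partial t}{\B^*w}$ and $\int\vartheta t\ip{D}{\B\otimes w}$-type terms;
\item a zeroth-order $\eta$ term.
\end{itemize}
Together with $\partial t$, these $\eta$ terms reassemble $a_t$. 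I would then complete the square
\[
|a_t|^2+2\re\ip{a_t}{\B^*w}=|a_t+\B^*w|^2-|\B^*w|^2.
\]
The leftover $t|w|^2$ should appear after rewriting the pairing of $\Nt$ with $w$ through the divergence identity $\sum_p\Nt_{app}=\frac12 w_a$. This step is where I expect the main obstacle: getting $\B^*w$ with coefficient exactly $2$ in the boundary term and $1$ inside the square.

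\emph{Step 3: the algebraic regrouping.} Substitute \eqref{eq:hook-norm-FH} for $t|D|^2$ and split $F=F_0+\frac\chi3I$ and $\KB=\KB_0+\frac t3I$. This gives
\[
t\,\tr(\KB F)=t\,\tr(\KB_0F_0)+\frac{\chi t^2}{3},
\]
and $\tr(\KB H)=\tr(\KB_0H)$, since $\tr H=0$ by \eqref{eq:SG}. Next complete the squares
\[
\frac32\Bigl|F_0+\frac43H+\frac13\KB_0\Bigr|^2
\qquad\text{and}\qquad
\frac16|H-\KB_0|^2 .
\]
Expanding these absorbs the cross terms $\tr(\KB_0F_0)$ and $\tr(\KB_0H)$. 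It leaves $-\frac13|\KB_0|^2$ and an excess $\frac{16}{6}|H|^2$ versus the available $\frac{16}{6}|H|^2+\frac16|\Psi|^2$. The excess is rewritten through $|G|^2$ and the identity $r_G=\re\tr(G^2)=|H|^2-|\Psi|^2$, giving the term $-\frac t6 r_G$. Finally, I would check the constants by matching the coefficient of $|\Psi|^2$ on both sides.
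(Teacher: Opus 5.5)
Your outline follows the paper's proof. It uses the Bochner formula for $t$ with $\nabla'\B=\Nt+\frac12\eta\otimes\B$, the contracted mixed identity together with the divergence identity, then weighting by $\vartheta t$, integrating the $H_w$ term by parts, and the same two square completions; your Step 3 is correct as written. However, your attribution of terms in Step 1 is wrong. Followed literally, it would give the wrong coefficient of $\tr(\KB H)$.

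\textbf{Step 1.} The symmetrization in \eqref{eq:N-mixed} runs over $a,b$ and the \emph{barred} slot, with $p$ free; you then set the barred slot equal to $p$ and sum.

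\emph{The hook terms involve no derivative swap.} The two extra terms are $\sum_p\Nt_{app,\bar b}=\frac12w_{a,\bar b}$ and its $a\leftrightarrow b$ partner, because contraction commutes with $\nabla$. Hence $\sum_p\Nt_{abp,\bar p}=-\Sym_2(\B G^*)-H_w$ directly, with no curvature term. Paired with $\B$ under $2\re$, this contributes $-2\tr(\KB H)-2\re\ip{H_w}{\B}$ to $\Delta t$.

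\emph{The only curvature term is the Bochner commutator.} It turns $\B_{,\bar pp}$ into $\B_{,p\bar p}$ and is the mean curvature of $\Sym^2E\otimes\KM$. By Lemma~\ref{lem:ricci}, $\rho^{(2)}=\frac12F+\frac32H$ and $\tr\rho=\chi/2$. So the commutator contributes $\tr(\KB(F+3H))-\frac\chi2t$, not $\tr(\KB F)$. The $+3\tr(\KB H)$ and $-2\tr(\KB H)$ combine to give the $+\tr(\KB H)$ in \eqref{eq:primitive-norm}.

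\emph{The $\chi$ and $\eta$ bookkeeping.} The $\chi$ terms are $\chi t$, from $2\re$ of $\frac\chi2\B$, minus $\frac\chi2 t$ from the canonical factor. The term $\frac12\sum_p\eta_pD_{abp}$ cancels the $D$-part of $\re\ip{\partial t}{\eta}$. You do not need \eqref{eq:B-curv-sym} or \eqref{eq:S-transport} here; they are already absorbed into \eqref{eq:N-mixed}.

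\textbf{Step 2.} No $\ip{D}{\B\otimes w}$-type term arises. Moving $\nabla_{\bar b}$ off $w_{a,\bar b}$ lands on $\overline{\B_{ab}}$ as the conjugate divergence $\overline{\sum_b\B_{ab,b}}=\frac12\bar w_a+\frac12\sum_b\bar\eta_b\overline{\B_{ab}}$.
\begin{itemize}
\item The first piece gives $-t|w|^2$.
\item The second piece combines with the $\bar\eta$ from the divergence rule, with net coefficient $\frac12$, to turn $\bp t$ into $\overline{a_t}$. This is \eqref{eq:Hw-int}.
\end{itemize}
The coefficient $2$ on $\B^*w$ is inherited from $2\re\ip{H_w}{\B}$. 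Completing $|a_t|^2+2\re\ip{a_t}{\B^*w}$ then gives coefficient $1$ inside the square. The boundary term is the $\re\int t\ip{\partial\vartheta}{a_t}$ from \eqref{eq:stokes-P} plus the $2\re\int t\ip{\partial\vartheta}{\B^*w}$ from \eqref{eq:Hw-int}. So the obstacle you anticipated does not occur.
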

\begin{proof}
The mixed equation in Lemma~\ref{lem:differential} can be written as
\begin{equation}\label{eq:Nmixed-expanded}
\Nt_{abp,\bar q}+\Nt_{aqp,\bar b}+\Nt_{bqp,\bar a}
=-\frac12\bigl(\B_{ab}(G^*)_{pq}+\B_{aq}(G^*)_{pb}+\B_{bq}(G^*)_{pa}\bigr).
\end{equation}
Set $q=p$ and sum. Using $\tr G=0$ and $\sum_p\Nt_{app}=w_a/2$ gives
\[
\sum_p\Nt_{abp,\bar p}=-(J_G)_{ab}-(H_w)_{ab},\qquad
J_G=\Sym_2(\B G^*).
\]
If $E^0_{abp}=\B_{ab,p}=\Nt_{abp}+\eta_p\B_{ab}/2$, it follows that
\begin{equation}\label{eq:E0-div}
\sum_pE^0_{abp,\bar p}=\frac\chi2\B_{ab}
+\frac12\sum_p\eta_pD_{abp}-(J_G)_{ab}-(H_w)_{ab}.
\end{equation}
The norm product rule and the mixed Hessian commutator give
\begin{equation}\label{eq:norm-rule}
\Delta t=|E^0|^2+|D|^2
+2\re\ip{\sum_pE^0_{\cdot\cdot p,\bar p}}{\B}
+\ip{\mathcal M\B}{\B},
\end{equation}
where the mean curvature of $\Sym^2E\otimes\KM$ is
\[
(\mathcal M\B)_{ab}=\sum_r\bigl(\rho^{(2)}_{r\bar a}\B_{rb}
+\rho^{(2)}_{r\bar b}\B_{ar}\bigr)-(\tr\rho)\B_{ab}.
\]
The sign in \eqref{eq:norm-rule} follows from commuting $\nabla_p\nabla_{\bar p}$ to $\nabla_{\bar p}\nabla_p$; the mixed torsion is zero. Lemma~\ref{lem:ricci} gives
\[
\rho=\frac12(F-H),\qquad \rho^{(2)}=\frac12F+\frac32H,
\qquad \tr\rho=\frac\chi2.
\]
Consequently,
\[
\ip{\mathcal M\B}{\B}=\tr\bigl(\KB(F+3H)\bigr)-\frac\chi2t,
\qquad \re\ip{J_G}{\B}=\tr(\KB H).
\]
Insert these formulas and \eqref{eq:E0-div} into \eqref{eq:norm-rule}, and subtract the drift term. The first-order terms combine as
$|E^0|^2-\re\ip{E^0}{\eta\otimes\B}=|\Nt|^2-|\eta|^2t/4$.
This proves \eqref{eq:primitive-norm}.

We next integrate the $H_w$ term. The vector divergence rule for the original volume is
\[
\int_M\sum_i(\nabla_iV)^i=\int_M\sum_i\eta_iV^i.
\]
Indeed, the difference between the Chern divergence and the coordinate volume divergence is $\eta_iV^i$. Its conjugate version, applied to the complete tensor contraction, together with
$\sum_a\B_{ab,a}=w_b/2+\sum_a\eta_a\B_{ab}/2$, gives
\begin{equation}\label{eq:Hw-int}
2\re\int_M\vartheta t\ip{H_w}{\B}
=-\int_M\vartheta t|w|^2-2\re\int_M\vartheta\ip{w}{\B a_t}
-2\re\int_Mt\ip{w}{\B\partial\vartheta}.
\end{equation}
All factors are differentiated here, including the weight. For the algebraic completion let us set $T_1=F_0+4H/3$. Formula~\eqref{eq:hook-norm-FH} implies
\[
|D|^2+\tr\bigl(\KB(F+H)\bigr)
\ =\ |S|^2+\frac32|T_1+\KB_0/3|^2+\frac16|\Psi|^2+\frac\chi3t-\frac16|\KB_0|^2-\frac13\tr(\KB_0H).
\]
Since $G=H-\Psi$, one has $r_G=|H|^2-|\Psi|^2$, and thus
\[
\frac16|\Psi|^2-\frac16|\KB_0|^2-\frac13\tr(\KB_0H)
=\frac16|H-\KB_0|^2-\frac13|\KB_0|^2-\frac16r_G.
\]
Multiply \eqref{eq:primitive-norm} by $\vartheta t$ and integrate. Using \eqref{eq:stokes-P} and \eqref{eq:Hw-int}, the first-order terms become
\[
|a_t|^2+2\re\ip{w}{\B a_t}+t|w|^2
=|a_t+\B^*w|^2+t|w|^2-|\B^*w|^2.
\]
The two remaining weight derivatives give the second integral in \eqref{eq:weighted-energy}. This completes  the proof of the proposition.
\end{proof}

\begin{proposition}\label{prop:scalar}
On $M$ one has the scalar identity
\begin{equation}\label{eq:scalar}
\PP(\chi-2t)=3|S|^2+2|\Nt|^2.
\end{equation}
\end{proposition}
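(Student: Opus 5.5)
We will compute $\PP\chi$ and $\PP t$ separately and take the combination $\PP\chi-2\PP t$. The formula for $\PP t$ is already given by \eqref{eq:primitive-norm}. It remains to compute $\PP\chi=-\LL\chi+(\frac\chi2-\frac{|\eta|^2}4)\chi$. The natural route is to express $\chi=\tr\Phi=\sum_p\eta_{p,\bar p}$ and obtain a second identity for $\chi$ through the torsion. The starting point is the divergence formula $\sum_p\Nt_{app}=\frac12 w_a$ from \eqref{eq:N-mixed}, together with the mixed equation \eqref{eq:Nmixed-expanded}.

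\textbf{Step 1: a Bochner formula for $\chi$.} Contract \eqref{eq:Nmixed-expanded} fully, once against $\bar\B$ and once over indices, to relate $\sum\Nt_{abp,\bar q}$ to $\tr(\KB H)$. Separately, differentiate the Ricci relations of Lemma~\ref{lem:ricci}. Here $\chi=2\tr\rho$, and $\tr\rho$ is the Chern scalar curvature. The second Bianchi identity \eqref{eq:exterior-bianchi}, contracted twice, together with \eqref{eq:mixed-bianchi} for $T_{,\bar j}$, expresses $\Delta\chi$ through $\eta$, $\Phi$, $\partial\eta=W$ and curvature–torsion products. Using $\rho=\frac12(F-H)$ and $\rho^{(2)}=\frac12F+\frac32H$, I expect a formula of the schematic form
\[
\PP\chi=2|D|^2+2\tr(\KB(F+H))-4\re\ip{H_w}{\B}+(\text{terms quadratic in }S,\Nt),
\]
in which all the terms involving $F$, $H$, $w$ and $\Psi$ match twice the corresponding terms in \eqref{eq:primitive-norm}.

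\textbf{Step 2: cancellation.} Subtracting twice \eqref{eq:primitive-norm} should then remove the $\tr(\KB(F+H))$, $H_w$ and $Z$-parts of $|D|^2$ (via \eqref{eq:hook-norm}). What is left should be $3|S|^2+2|\Nt|^2$. The relation $S=-2\Sym_3Q$ from \eqref{eq:SG} converts any $\ip{Q}{\bar Q}$-type curvature square appearing in $\Delta\chi$ into $|S|^2$. Similarly, $|\nabla\B|$ terms combine with the $\eta$-drift as in the proof of Proposition~\ref{prop:energy}, giving $|\Nt|^2$ after completing the square with $-\frac{|\eta|^2}4$.

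\textbf{Main obstacle.} The hard part is Step 1: producing a clean second-order formula for $\chi$ whose curvature-squared terms are exactly a multiple of $|S|^2$. The key is that vanishing holomorphic sectional curvature forces $\partial\bar\partial$ of the scalar curvature to be expressible via $\nabla\nabla T$. Using \eqref{eq:mixed-bianchi} twice, $\Delta\tr\rho$ becomes a divergence of $T_{,\bar j}$-type quantities, i.e. of $D$ and $\eta$-derivatives. I would first try writing $\chi$ in terms of $\B$ and $\eta$ via the contraction identity $\sum_p\B_{ap,p}$ in the proof of Lemma~\ref{lem:differential}, then apply $\nabla_{\bar a}$ and integrate the commutators using \eqref{eq:Q-transport}. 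If the $|S|^2$ coefficient does not come out exactly as $3$, I would recheck the normalization $|D|^2=|S|^2+\frac23|Z|^2$ against the multiplicities used in Lemma~\ref{lem:algebra}.
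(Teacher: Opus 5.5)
Your outer architecture matches the paper's: compute $\LL\chi$, expand \eqref{eq:primitive-norm} using \eqref{eq:hook-norm-FH}, and subtract twice. The gap is that Step~1 does not reduce the problem; it restates it. Because \eqref{eq:primitive-norm} is already known, the proposition is \emph{equivalent} to
\[
\PP\chi=-2|D|^2-2\tr\bigl(\KB(F+H)\bigr)+4\re\ip{H_w}{\B}+3|S|^2 .
\]
Saying that you expect a formula of this shape therefore leaves the entire statement unproved. Your displayed schematic also has all three signs reversed relative to your own claim that these terms are twice those in \eqref{eq:primitive-norm}. With your signs, subtracting $2\PP t$ would double those terms rather than cancel them. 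Finally, $\PP\chi$ contains no $|\Nt|^2$ term: the $2|\Nt|^2$ in \eqref{eq:scalar} comes entirely from $-2\PP t$.

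The missing idea is a first-order identity for $\bp\chi$, and the mechanisms you suggest do not produce one.
\begin{itemize}
\item The contraction $\sum_p\B_{ap,p}$ involves only $(1,0)$-derivatives and leads to $w$, that is, to $\partial\eta$. By contrast, $\chi=\tr\Phi$ is a trace of barred derivatives of $\eta$. Differentiating that contraction again gives identities linear in $\B$, which is the route to \eqref{eq:primitive-norm}, not a scalar equation for $\chi$.
\item $S=-2\Sym_3Q$ does not turn curvature squares into $|S|^2$. By \eqref{eq:Q-decomp}, $Q=-\frac12S+J(G)$, so Ricci-type squares such as $|F|^2$, $|H|^2$, $|\Psi|^2$, $\tr(FH)$ necessarily appear. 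They must cancel against the $Z$-part of $|D|^2$.
\end{itemize}

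The paper obtains the needed identity from the Ricci identity for $\At$ in two barred directions. This identity has only a torsion term, since the Chern curvature has no $(0,2)$ part. Set $\mathcal U=\At_{,\bar j}$ and $\mathcal H_{ab}=\sum\eps_{j\ell b}\mathcal U_{a\ell j}$. The formula for ${}^tZ$ gives ${}^t\mathcal H=-4\rho+2\Phi+2\Phi^*-\chi I$, while the barred divergence of $\mathcal H$ is a pure torsion contraction. Combining this with the barred divergences of $\Phi$ and of $\rho$ (the latter from closedness of the Ricci form) yields
\[
\bp\chi=\tfrac\chi2\bar\eta-\nu+2X(\Phi)+\tfrac23X(G)+{}^tG\bar\eta-2\,dG,\qquad \nu_a=\sum_{\ell,r}\overline{\B_{\ell r}}S_{a\ell r}.
\]
Here $X(V)$ is the contraction of $V$ with $\overline{T_0}$, and $(dG)_a=\sum_bG_{ba,\bar b}$.

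Only after this step does the divergence computation become controllable:
\begin{itemize}
\item $|S|^2$ arises solely from $\re\divg'\nu=|S|^2+\frac12\re\sum_a\eta_a\nu_a+\frac23\tr(\KB H)$. This uses \eqref{eq:S-transport} and $S=\Sym_3D$.
\item The divergences of $X(\Phi)$ and $X(G)$ require ${}^tZ$ and the curl identities $\Curl G=0$ and $(\Curl G^*)_{ab;r}=W_{ab,\bar r}$. These produce the $\re\ip{H_w}{\B}$ terms.
\item $\divg'(dG)$ requires a further commutation, which contributes $-2|\Psi|^2$.
\end{itemize}
The proof consists of the exact cancellation of the six mixed scalars $\tr(\KB F)$, $\tr(\KB H)$, $\tr(FH)$, $|H|^2$, $|\Psi|^2$, $\re\ip{H_w}{\B}$. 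Your proposal supplies no mechanism that forces this cancellation.
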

\begin{proof}
We give the trace calculation in detail. Set
\[
\begin{gathered}
(T_0)^r_{ba}=\sum_d\eps_{bad}\B_{dr},\qquad
h_2=|H|^2,\qquad a_2=|\Psi|^2,\\
f_H=\tr(FH),\qquad k_H=\tr(\KB H),\qquad c_F=\tr(\KB F),\qquad b_w=\re\ip{H_w}{\B}.
\end{gathered}
\]
Note that all these scalar quantities are real. We will carry out the proof in the following steps.

\vspace{0.15cm}

\step{Step 1. The trace-gradient identity.}
Let $\mathcal U_{a\ell j}=\At_{a\ell,\bar j}$ and
$\mathcal H_{ab}=\sum_{j,\ell}\eps_{j\ell b}\,\mathcal U_{a\ell j}$.
The formula for ${}^t\!Z$ in Lemma~\ref{lem:differential} gives
\begin{equation}\label{eq:sc-H}
{}^t\!\mathcal H={}^t\!Z+\frac12\Phi-\frac\chi2I
=2G+2\Phi-\chi I=-4\rho+2\Phi+2\Phi^*-\chi I.
\end{equation}
Here the additional contraction is
$\frac12\sum_{j,\ell,k}\eps_{j\ell b}\eps_{a\ell k}\Phi_{kj}
=(\Phi_{ba}-\chi\delta_{ab})/2$. The Chern connection has no $(0,2)$ curvature. Its full Hessian commutator therefore gives
\[
\mathcal U_{a\ell j,\bar m}-\mathcal U_{a\ell m,\bar j}
=\sum_r\overline{T^r_{jm}}\,\mathcal U_{a\ell r}.
\]
Use this with $(\ell,j,m)=(1,2,3),(2,3,1),(3,1,2)$ and sum. The three components of $\mathcal H$ are the corresponding alternating differences of $\mathcal U$, whence
\begin{equation}\label{eq:sc-divH}
\sum_j\mathcal H_{aj,\bar j}=-\sum_{\ell,r}\overline{\At_{\ell r}}\,\mathcal U_{a\ell r}.
\end{equation}
The same commutator on $\eta$, and closedness of the first Chern--Ricci form, give respectively
\begin{align}
\sum_j\Phi_{ja,\bar j}-\chi_{,\bar a}
&=-\sum_{j,r}\overline{T^r_{ja}}\Phi_{jr},\label{eq:sc-divPhi}\\
\sum_j\rho_{ja,\bar j}
&=\frac12\chi_{,\bar a}-\sum_{j,r}\overline{T^r_{ja}}\rho_{jr}.
\label{eq:sc-divrho}
\end{align}
For the latter, one may use $\rho_{i\bar j}=-\partial_i\partial_{\bar j}\log\det(g_{k\bar\ell})$ in a holomorphic chart and then restore the torsion term. Differentiating \eqref{eq:sc-H} and substituting \eqref{eq:sc-divH}--\eqref{eq:sc-divrho} yields
\begin{equation}\label{eq:sc-grad0}
2\sum_j\overline\Phi_{aj,\bar j}-\chi_{,\bar a}+C_a=0,
\qquad C_a=\sum_{\ell,r}\overline{\At_{\ell r}}\,\mathcal U_{a\ell r}
+\sum_{b,r}\overline{T^r_{ba}}(4\rho-2\Phi)_{br}.
\end{equation}

\vspace{0.1cm}

\step{Step 2. Expansion of the source.}
For a matrix $V$ with the slots of $\Phi$, define
\[
X(V)_a=\sum_{b,r}\overline{(T_0)^r_{ba}}V_{br},\qquad
\nu_a=\sum_{\ell,r}\overline{\B_{\ell r}}S_{a\ell r},\qquad
(dG)_a=\sum_bG_{ba,\bar b}.
\]
These are $(0,1)$-forms, or equivalently raised $(1,0)$ vectors. By \eqref{eq:hook-norm},
\[
\mathcal U_{a\ell r}=S_{a\ell r}+L(Z)_{a\ell r}
+\frac12\sum_k\eps_{a\ell k}\Phi_{kr}.
\]
When contracted against $\overline{\B_{\ell r}}$, the $\eps_{r\ell b}$ term vanishes. The remaining $\Phi$ terms cancel after interchanging $\ell,r$. Thus
\begin{equation}\label{eq:sc-primitive-source}
\sum_{\ell,r}\overline{\B_{\ell r}}\mathcal U_{a\ell r}
=\nu_a-\frac23X(G)_a.
\end{equation}
For the skew part of $\At$, formula~\eqref{eq:sc-H} gives
\[
\sum_{\ell,r}\overline{(\At-\B)_{\ell r}}\mathcal U_{a\ell r}
=-\sum_k(G_{ka}+\Phi_{ka})\bar\eta_k+\frac\chi2\bar\eta_a.
\]
The remaining part of $C$ is the full torsion contracted against $-2G+2(\Phi^*-\Phi)$. Its trace part is
$\sum_b\bar\eta_b[G_{ba}-(\Phi^*)_{ba}+\Phi_{ba}]$.
Consequently,
\begin{equation}\label{eq:sc-C}
C=\nu-\frac83X(G)+2X(\Phi^*-\Phi)-\bar\Phi\bar\eta+\frac\chi2\bar\eta.
\end{equation}
The bar on $\Phi$ in this formula is entrywise conjugation, not adjoint. Since $\Phi^*=2\rho+G$, equations \eqref{eq:sc-divrho} and \eqref{eq:sc-grad0} now give
\begin{equation}\label{eq:sc-gradient}
\bp\chi=\frac\chi2\bar\eta-\nu+2X(\Phi)+R_0,
\qquad R_0=\frac23X(G)+V_0,
\qquad V_0={}^tG\bar\eta-2dG.
\end{equation}

\vspace{0.1cm}

\step{Step 3. Curls and divergences.}
For a coefficient matrix $V$ and a raised vector $v$, put
\[
(\Curl V)_{ab;r}=V_{br,a}-V_{ar,b}+\sum_sT^s_{ab}V_{sr},
\qquad D_\eta v=\re\sum_a\left(v_{a,a}-\frac12\eta_av_a\right).
\]
In holomorphic coordinates, $\Phi_{i\bar j}=\partial_{\bar j}\eta_i$ and
$(\Phi^*)_{i\bar j}=\partial_i\bar\eta_j$. Since the first Ricci form is closed,
\begin{equation}\label{eq:sc-curls}
\Curl G=0,\qquad (\Curl G^*)_{ab;r}=W_{ab,\bar r},\qquad
\Curl\Phi=\Curl G^*.
\end{equation}
Contracting the second equation with $\overline{(T_0)^r_{ba}}$ gives
\begin{equation}\label{eq:sc-curlW}
\re\sum_{a,b,r}\overline{(T_0)^r_{ba}}(\Curl G^*)_{ab;r}=-2b_w.
\end{equation}
Indeed, $\sum_{a,b}\eps_{bad}\eps_{abs}=-2\delta_{ds}$, and symmetry of $\B$ replaces the derivative of $w$ by $H_w$.

Differentiate $X(V)$ and antisymmetrize the derivative of $V$ in $a,b$. The derivative of the torsion factor contributes $\overline{Z_{rb}}$. Restoring the torsion term in $\Curl V$ contributes $\tr(\KB V)$ and $\frac12\sum_a\eta_aX(V)_a$; the latter cancels the shift in $D_\eta$. Thus
\begin{equation}\label{eq:sc-divX}
D_\eta X(V)=\re\left\{\sum_{b,r}\overline{Z_{rb}}V_{br}
+\tr(\KB V)+\frac12\sum_{a,b,r}\overline{T_0{}^r{}_{ba}}(\Curl V)_{ab;r}\right\}.
\end{equation}
Using ${}^tZ=2G+3\Phi/2-\chi I/2$ and \eqref{eq:sc-curls}--\eqref{eq:sc-curlW}, we obtain
\begin{equation}\label{eq:sc-divX-values}
\begin{aligned}
D_\eta X(\Phi)&=\frac32|F|^2-\frac12\chi^2+2f_H-\frac12a_2+c_F-b_w,\\
D_\eta X(G)&=2h_2+\frac12a_2+\frac32f_H+k_H,\\
D_\eta X(G^*)&=2h_2-\frac12a_2+\frac32f_H+k_H-b_w.
\end{aligned}
\end{equation}
The relevant pairings are $\re\ip\Phi G=f_H-a_2$,
$\re\ip{G^*}\Phi=f_H+a_2$, and $\re\ip{G^*}G=h_2-a_2$.

Contracting $\Curl G=0$ and using $\tr G=0$ gives
\begin{equation}\label{eq:sc-J}
J_b:=\sum_aG_{ba,a}=-\sum_{a,s}T^s_{ab}G_{sa}
=-\overline{X(G^*)_b}+\frac12(G\eta)_b.
\end{equation}
The curvature action on all slots of $G$ is
\[
(R^G_{p\bar q}G)_{ra}=\sum_sR_{p\bar qs\bar a}G_{rs}
-\sum_sR_{p\bar qr\bar s}G_{sa}.
\]
Using the two auxiliary Ricci contractions from Lemma~\ref{lem:ricci}, it follows that
\begin{equation}\label{eq:sc-curvG}
\re\sum_{a,b}(R^G_{a\bar b}G)_{ba}
=\re\tr\bigl((\Phi^*-\Phi)G\bigr)=-2a_2.
\end{equation}
Here $\Phi^*-\Phi=-2\Psi$, $G=H-\Psi$, and $\tr(\Psi^2)=-a_2$. Let $\divg'v=\sum_av_{a,a}$, and use $\divg''$ for the conjugate-slot contraction. Commuting the two full derivatives in \eqref{eq:sc-J}, with \eqref{eq:sc-curvG}, gives
\begin{equation}\label{eq:sc-divdG}
\re\divg'(dG)=-\re\divg'X(G^*)
+\frac12\re\divg''(G\eta)-2a_2.
\end{equation}
The product rules also give
\begin{equation}\label{eq:sc-products}
\begin{aligned}
& D_\eta({}^tG\bar\eta)=f_H-a_2-\re\sum_a\eta_aX(G^*)_a,\\
& \re\divg''(G\eta)=\re\sum_a\eta_a(dG)_a+f_H+a_2.
\end{aligned}
\end{equation}
Substitution into \eqref{eq:sc-divdG} cancels the $\eta\cdot dG$ terms and yields
\begin{equation}\label{eq:sc-divR0}
\begin{aligned}
D_\eta V_0&=2D_\eta X(G^*)+2a_2=4h_2+a_2+3f_H+2k_H-2b_w,\\
D_\eta R_0&=\frac{16}{3}h_2+\frac43a_2+4f_H+\frac83k_H-2b_w.
\end{aligned}
\end{equation}

\vspace{0.1cm}

\step{Step 4. The scalar cancellation.}
By the product rule and \eqref{eq:S-transport},
\[
\nu_{a,p}=\sum_{\ell,r}\overline{D_{\ell rp}}S_{a\ell r}
+\frac12\eta_p\nu_a
-2\sum_{\ell,r}\overline{\B_{\ell r}}\Sym_{a\ell r}
\sum_{u,v}\eps_{puv}\B_{av}Q_{u\ell r}.
\]
Contract $p=a$ and take real parts. The first term is $|S|^2$. In the last term, the contraction of $\eps$ with the symmetric $\B_{av}$ vanishes; the other two terms agree and give
\[
-\frac43\sum_{a,\ell,r,u,v}\overline{\B_{\ell r}}\eps_{auv}\B_{v\ell}Q_{uar}
=\frac23\sum_{r,v}\KB_{rv}G_{vr},
\]
since $\sum_{a,u}\eps_{auv}Q_{uar}=-G_{vr}/2$. Hence
\begin{equation}\label{eq:sc-divnu}
\re\divg'\nu=|S|^2+\frac12\re\sum_a\eta_a\nu_a+\frac23k_H.
\end{equation}
Take the holomorphic divergence of \eqref{eq:sc-gradient}, its real part, and subtract the drift. Equations \eqref{eq:sc-divX-values} and \eqref{eq:sc-divnu} give
\[
\LL\chi=3|F|^2-\frac12\chi^2-\frac14\chi|\eta|^2-|S|^2
+2c_F+4f_H-a_2-\frac23k_H-2b_w+D_\eta R_0.
\]
For clarity, differentiating $(\chi/2)\bar\eta$ contributes $\chi^2/2$ and half the drift. The other half of the drift is eliminated with \eqref{eq:sc-gradient}; this produces $-\chi|\eta|^2/4$ and the shifted divergence of $R_0$. Using \eqref{eq:sc-divR0}, we obtain
\begin{equation}\label{eq:sc-Lchi}
\begin{split}
\LL\chi={}&3|F|^2-\frac12\chi^2-\frac14\chi|\eta|^2-|S|^2
+2c_F+8f_H+\frac{16}{3}h_2+\frac13a_2+2k_H-4b_w.
\end{split}
\end{equation}
On the other hand, expanding \eqref{eq:primitive-norm} with \eqref{eq:hook-norm-FH} gives
\begin{equation}\label{eq:sc-Lt}
\begin{split}
\LL t={}&|\Nt|^2+|S|^2+\frac32|F|^2-\frac12\chi^2
+4f_H+\frac83h_2+\frac16a_2\\
&+\left(\frac\chi2-\frac{|\eta|^2}{4}\right)t+c_F+k_H-2b_w.
\end{split}
\end{equation}
Subtract twice \eqref{eq:sc-Lt} from \eqref{eq:sc-Lchi}. All six mixed quantities $c_F,k_H,f_H,h_2,a_2,b_w$ cancel, and the remaining equation is
\[
\LL(\chi-2t)=\left(\frac\chi2-\frac{|\eta|^2}{4}\right)(\chi-2t)
-3|S|^2-2|\Nt|^2.
\]
This is just \eqref{eq:scalar}, and we have completed the proof of the proposition.
\end{proof}

\vspace{0.1cm}

\begin{proposition}\label{prop:compact-estimate}
Suppose $3|S|^2+2|\Nt|^2$ is not identically zero. Then
\begin{equation}\label{eq:scalar-gap}
\psi:=\chi-2t\geq0
\end{equation}
on $M$, and
\begin{equation}\label{eq:compact-estimate}
\int_Mt r_G\geq6\int_Mt(|\Nt|^2+|S|^2)
+\frac83\int_Mt^3+2\int_M\psi t^2.
\end{equation}
If $\rank Q\leq1$ everywhere, then $Q=0$.
\end{proposition}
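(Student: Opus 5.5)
Three steps, in order: a minimum principle for $\PP$ giving $\psi\ge0$; the weighted energy identity with weight $\vartheta=t$ plus algebra giving \eqref{eq:compact-estimate}; and the pointwise rank-one bound \eqref{eq:rank-one-bound} fed into that estimate to force $Q=0$.

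\emph{Step 1: $\psi\geq0$.} By Proposition~\ref{prop:scalar}, $\PP\psi=F$ with $F=3|S|^2+2|\Nt|^2\ge0$. Apply \eqref{eq:negative-part} to $v=\max\{-\psi,0\}$. This gives $\int_M vF=0$ and $a_v=0$ almost everywhere. Now $a_v=0$ means $\partial v=\frac v2\eta$, so locally $v$ is a multiple of a positive function; hence the zero set of $v$ is open and closed. On a connected $M$, either $v\equiv0$ or $v>0$ everywhere. If $v>0$ everywhere, then $\int_M vF=0$ forces $F\equiv0$, against the hypothesis. So $v\equiv0$, i.e.\ $\psi\ge0$. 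The unique-continuation point should be handled through the first-order linear equation $\partial v=\frac v2\eta$ with $v$ real, as in the Lipschitz setting of Lemma~\ref{lem:stokes}.

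\emph{Step 2: the estimate \eqref{eq:compact-estimate}.} Take $\vartheta=t$ in \eqref{eq:weighted-energy}.
\begin{itemize}
\item The boundary-type term is $\re\int t\ip{\partial t}{a_t+2\B^*w}$. Rewrite $\partial t=a_t+\frac t2\eta$ and then complete squares with the leading term $t|a_t+\B^*w|^2$ of $\vartheta\mathcal E$.
\item Use the elementary bound $|\B^*w|^2\le t|w|^2$, so that $t|w|^2-|\B^*w|^2\ge0$.
\item Drop the nonnegative squares $\frac32t|F_0+\cdots|^2$ and $\frac t6|H-\KB_0|^2$.
\item Control $-t|\KB_0|^2$ algebraically: $|\KB_0|^2\le \frac23t^2$ since $\KB\ge0$ with trace $t$.
\item Substitute $\chi=\psi+2t$.
\end{itemize}
The term $\frac{\chi t^2-t|\KB_0|^2}{3}$, multiplied by $\vartheta=t$, should produce $\frac{t^2(\psi+2t)}{3}-\frac{2t^3}{9}\cdot t$-type contributions. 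The target constants $6$, $\frac83$, $2$ then come out after multiplying the whole identity by $6$, since $r_G$ enters with coefficient $-\frac t6$.

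The main obstacle is the first bullet. The leftover $\eta$-terms ($t^2|\eta|^2$ and cross terms with $a_t$) must be absorbed. I would first try applying \eqref{eq:stokes-chi} with test function $t^3$ to trade $\int t^3|\eta|^2$ against $\int t^3\chi$ plus $\re\int\ip{\eta}{\partial t^3}$, and then recombine, tuning the coefficients until everything not in the target is a square. If that fails, I would apply \eqref{eq:stokes-P} to $u=t^{3/2}$ or similar, to convert first-order terms into $\PP$-quantities.

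\emph{Step 3: rank $\le1$ forces $Q=0$.} Assume $\rank Q\le1$ everywhere. Suppose first $F\not\equiv0$. Then \eqref{eq:compact-estimate} and \eqref{eq:rank-one-bound} give
\[
\int tr_G\le 3\int t|S|^2\le 6\int t(|\Nt|^2+|S|^2)\le \int t r_G-\frac83\int t^3 .
\]
Hence $t\equiv0$, and by the same chain also $\int t|S|^2=0$. But $t\equiv0$ means $\B=0$, so $D=0$ and $S=0$, and hence $F=2|\Nt|^2\equiv0$ as well, a contradiction. So $F\equiv0$, which gives $S\equiv0$. By the last assertion of Lemma~\ref{lem:algebra}, at a rank $\le1$ point $Q\ne0$ implies $S\ne0$; therefore $Q\equiv0$.
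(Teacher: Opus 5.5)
Step 1 and the logic of Step 3 are essentially the paper's argument. Step 2, however, has a genuine gap: the weight $\vartheta=t$ is the wrong choice.

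\textbf{Why $\vartheta=t$ fails.} Every term of $\mathcal E$ in \eqref{eq:energy-density} that survives into \eqref{eq:compact-estimate} already carries one factor of $t$. These terms are $t|\Nt|^2$, $t|S|^2$, $\frac13(\chi t^2-t|\KB_0|^2)$ and $-\frac t6r_G$. Multiplying by $\vartheta=t$ therefore gives an inequality of the wrong homogeneity, involving $\int t^2r_G$, $\int t^2(|\Nt|^2+|S|^2)$, $\int t^4$ and $\int\psi t^3$. It also creates the boundary term $\re\int_M t\ip{\partial t}{a_t+2\B^*w}$. After completing the square, this leaves $\frac{t^2}{2}\re\ip{\eta}{a_t+2\B^*w}$, which has no sign and is not controlled by anything you have. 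Your suggested repairs via \eqref{eq:stokes-chi} with $t^3$ or \eqref{eq:stokes-P} with $t^{3/2}$ are not specified. In any case they cannot turn $t^2$-weighted integrals into the $t$-weighted ones in the statement. Note also that your own bookkeeping, $\frac{t^2(\psi+2t)}{3}-\frac{2t^3}{9}$, is actually the $\vartheta=1$ computation.

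\textbf{The fix.} Take $\vartheta\equiv1$. Then $\partial\vartheta=0$, the second integral in \eqref{eq:weighted-energy} vanishes, and $\int_M\mathcal E=0$ with no obstacle at all. Now use your remaining bullets: drop $|a_t+\B^*w|^2$ and the two curvature squares, use $t|w|^2-|\B^*w|^2\geq0$ and $|\KB_0|^2\leq\frac23t^2$, and substitute $\chi=\psi+2t$. This gives, pointwise,
\[
\mathcal E\geq t(|\Nt|^2+|S|^2)+\frac49t^3+\frac13\psi t^2-\frac16t\,r_G .
\]
Integrating and multiplying by $6$ yields \eqref{eq:compact-estimate}. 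With this correction, your Step 3 goes through as written; it is the paper's argument in contrapositive order.

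\textbf{A smaller point in Step 1.} The phrase ``locally $v$ is a multiple of a positive function'' presupposes that $\eta+\bar\eta$ is locally closed. That is only known where $v>0$. The clean route, which the paper takes, is to work on the open set $\{\psi<0\}$. There $v=-\psi$ is smooth and $dv=\frac v2(\eta+\bar\eta)$ holds pointwise. Integrating this along a path from a point of that set shows $v$ stays bounded below by a positive constant up to any first exit. Hence the set is also closed, so by connectedness it is empty or all of $M$.
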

\begin{proof}
Put $F=3|S|^2+2|\Nt|^2$. Apply Lemma~\ref{lem:stokes} to $\PP\psi=F$ and $v=\max\{-\psi,0\}$. Equation~\eqref{eq:negative-part} gives $a_v=0$ almost everywhere and $\int_MvF=0$. On $U=\{\psi<0\}$, $v$ is smooth and positive, so
\[
dv=\frac v2(\eta+\bar\eta).
\]
If $U$ is nonempty, start a piecewise smooth path at a point $p\in U$. Before its first exit from $U$, this equation gives
\[
v(\gamma(s))=v(p)\exp\left(\frac12\int_0^s(\eta+\bar\eta)(\dot\gamma(r))\,dr\right).
\]
The exponent has a finite limit at a first exit, whereas $v$ must tend to zero there. Thus no first exit exists. Connectedness gives $v>0$ everywhere, contradicting $\int_MvF=0$ because $F$ is nonnegative and not identically zero. This proves \eqref{eq:scalar-gap}; no density assumption on $\{F>0\}$ is needed.

Take $\vartheta=1$ in \eqref{eq:weighted-energy}. The elementary bounds
\[
|\B^*w|^2\leq t|w|^2,\qquad
|\KB_0|^2=\sum_i\lambda_i^2-\frac13t^2\leq\frac23t^2,
\]
where $\lambda_i\geq0$ are the eigenvalues of $\KB$ and $\sum_i\lambda_i=t$, give
\[
\mathcal E\geq t(|\Nt|^2+|S|^2)+\frac49t^3+\frac13\psi t^2-\frac16t r_G.
\]
Integrating and multiplying by six proves \eqref{eq:compact-estimate}.

Finally, suppose $\rank Q\leq1$ everywhere and $Q\not\equiv0$. Lemma~\ref{lem:algebra} gives $r_G\leq3|S|^2$ and $S\ne0$ at a point where $Q\ne0$. Thus the preceding inequality applies and gives
\[
0\geq6\int_Mt|\Nt|^2+3\int_Mt|S|^2+\frac83\int_Mt^3+2\int_M\psi t^2\geq0.
\]
It follows that $t=0$ everywhere, and hence $\B=S=0$. This contradicts the choice of the point. Therefore $Q=0$, and we have completed the proof of the proposition.
\end{proof}

\vspace{0.3cm}

\section{Curvature rank and the case \texorpdfstring{$Q=0$}{Q = 0}}\label{sec:rank}
In this section we will treat the case in which this tensor $Q$ vanishes. Note that this step is necessary as $Q$ contains only part of the information of the Chern curvature $R$, so $Q=0$ does not mean $R=0$ yet. For later use, let us write
\begin{equation}\label{eq:C-def}
T_0(X,Y)=T(X,Y)-\frac12\{\eta(Y)X-\eta(X)Y\},\qquad
C_X=T_0(X,\cdot)+\frac12\eta(X)I.
\end{equation}
Thus \eqref{eq:Q-transport} is $\nabla'Q=CQ$, where $C$ acts on the output factor.

\begin{lemma}\label{lem:continuation}
The maximal-rank locus of $Q$ is open and dense. More generally, if $\Lambda^kQ$ vanishes on a nonempty open subset of $M$, then it vanishes identically.
\end{lemma}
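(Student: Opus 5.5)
The plan is to exploit the holomorphic-type transport equation $\nabla'Q=CQ$ from \eqref{eq:Q-transport}. This says that $Q$, viewed as a section of the bundle $\mathcal V=\mathrm{Hom}(\Sym^2\!\bar E,E\otimes\KM)$, satisfies a first-order linear equation in the $(1,0)$-directions with no derivative appearing on the right. Consequently $\Lambda^kQ$, a section of $\Lambda^k\mathcal V$ (precisely, of $\mathrm{Hom}(\Lambda^k\Sym^2\!\bar E,\Lambda^k(E\otimes\KM))$), satisfies the analogous equation $\nabla'(\Lambda^kQ)=C^{(k)}\Lambda^kQ$. Here $C^{(k)}$ is the derivation extension of $C$ acting on the output factor, and the identity follows from the Leibniz rule. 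The key step is to convert this into a $\bar\partial$-type equation with zeroth-order coefficient, and then apply a unique continuation theorem.

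Concretely, I take complex conjugates. The conjugate $\overline{\Lambda^kQ}$ is a section of the conjugate bundle and satisfies $\nabla''\overline{\Lambda^kQ}=\bar C^{(k)}\,\overline{\Lambda^kQ}$. Now $\nabla''$ on a Hermitian bundle differs from a $\bar\partial$-operator only by a zeroth-order term. For $E$ it is exactly the holomorphic $\bar\partial$, and the conjugate bundle $\bar E\cong E^*$ via the metric carries the Chern $\bar\partial$ as well. So in a local holomorphic frame of the bundle $\overline{\mathcal V}$, the components $u$ of $\overline{\Lambda^kQ}$ satisfy a system $\partial u/\partial\bar z_j=A_j u$ with smooth matrices $A_j$. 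I deliberately avoid trying to make $C$ itself holomorphic. It suffices to note that $|\bar\partial u|\le K|u|$ locally, which gives the differential inequality $|\Delta_{\mathrm{eucl}} u|\le K(|u|+|\nabla u|)$ after applying $\partial$. Alternatively, I would use the classical Carleman or Aronszajn unique continuation for such vector-valued $\bar\partial$-inequalities. The conclusion is that if $u$ vanishes on a nonempty open set, it vanishes on the connected coordinate domain. A standard open-closed argument on the connected manifold $M$ then shows that the interior of the zero set of $\Lambda^kQ$ is both open and closed. It is closed because, at a boundary point, a coordinate ball meets the interior, so the section vanishes on the whole ball. Hence $\Lambda^kQ\equiv0$.

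Density of the maximal-rank locus follows directly. Let $m$ be the maximal rank. The locus $\{\Lambda^mQ\ne0\}$ is open by continuity. If it failed to be dense, its complement would contain an open set on which $\Lambda^mQ=0$, and then $\Lambda^mQ\equiv0$, contradicting maximality.

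The main obstacle is the careful justification that the conjugated transport equation really is of the form $\bar\partial u=Au$ in a suitable local trivialization, including the canonical factor and the product structure $\Lambda^k$. One also needs to check that the unique continuation theorem one cites applies to systems rather than scalars. I would handle the latter by invoking Aronszajn's theorem for the elliptic system $\Delta u=(\partial A)u+A\,\partial u$, with smooth coefficients, obtained after differentiating once more.
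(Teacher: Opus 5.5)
Your proof is correct, but it reaches unique continuation by a different route from the paper. You differentiate the first-order system $\partial_{\bar z_j}u=A_ju$ once more to get the diagonal second-order elliptic system $\frac14\Delta u=\sum_j\bigl((\partial_{z_j}A_j)u+A_j\partial_{z_j}u\bigr)$. You then invoke the Aronszajn--Cordes theorem, which does apply to vector-valued $u$ because the principal part is the scalar Laplacian.

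The paper stays with the first-order Cauchy--Riemann structure and argues by hand. It uses a holomorphic source frame, so that the conjugated equation reads $\bar\partial A_0=A_1A_0$. In one complex variable it solves $P=I-\mathcal T(PA)$ on a small disk by a contraction for the Cauchy transform $\mathcal T$. This makes $P$ invertible and $Pv$ holomorphic, so the identity principle applies, and vanishing is then propagated slice by slice through a polydisk.

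Your route is shorter and works in an arbitrary smooth frame. It even gives strong unique continuation from a zero of infinite order, but it cites a deep theorem and needs differentiable coefficients. The paper's route is elementary and self-contained, and its key estimate uses only $\sup|A|$. It is invoked again later, for the scalar recurrence $\nabla'\B=\frac12\eta\B$ in Proposition~\ref{prop:rank}.

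One wording point: you cannot get the Laplacian inequality by ``applying $\partial$'' to the inequality $|\bar\partial u|\le K|u|$. You have to differentiate the equation $\bar\partial u=Au$ itself, which is what your last paragraph actually does.
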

\begin{proof}
We first recall an elementary uniqueness argument for a smooth system in one complex variable. Suppose $\partial_{\bar z}v=A(z)v$ on a disk, with $A$ smooth. On a sufficiently small disk, the equation
\[
P=I-\mathcal T(PA),\qquad
\mathcal T f(z)=\frac1\pi\int\frac{f(\xi)}{z-\xi}\,dA(\xi),
\]
has a solution close to $I$. Indeed, $\partial_{\bar z}\mathcal T f=f$, and on a disk of radius $r$,
\[
\sup_z\int\frac{dA(\xi)}{|z-\xi|}\leq4\pi r.
\]
The map on the right is therefore a contraction if $4r\sup|A|<1$. By decreasing $r$, we also have $|P-I|<1/2$, so $P$ is invertible. The solution is smooth in the interior. To see the regularity directly, splitting the difference of the kernels at distance $2\delta$ gives the modulus $C\|f\|_\infty\delta(1+|\log\delta|)$ for $\mathcal T f$. Thus $P$ is locally H\"older. Subtracting the value of the H\"older numerator at the singularity permits one differentiation, and iteration gives smoothness. Consequently
\[
\partial_{\bar z}(Pv)=-PAv+PAv=0.
\]
The identity principle for holomorphic functions, followed through overlapping disks, proves open-set uniqueness for $v$.

Now consider a matrix $A_0$ on a polydisk satisfying $\bp A_0=A_1A_0$, with $A_1$ a smooth matrix-valued $(0,1)$-form. Each column of $\Lambda^k A_0$ satisfies the induced system. If it vanishes on a product of small disks, the preceding argument propagates the vanishing first in one coordinate, then in the next, and finally throughout the polydisk. Overlapping coordinate neighborhoods give the same conclusion on a connected manifold.

Apply this observation to the conjugate of $\nabla'Q=CQ$. Its source is the holomorphic bundle $\Sym^2 E$. In a holomorphic source frame and a smooth output frame, the conjugate equation has precisely the form $\bp A_0=A_1A_0$; all connection coefficients occur on the output side. Hence the observation applies to every exterior power. If $r$ is the maximum rank of $Q$, the set $\{\Lambda^rQ\ne0\}$ is nonempty and open, and its complement has no interior. This completes the proof of the lemma.
\end{proof}

\begin{lemma}\label{lem:pure-trace}
If $\B=0$ on $M$, then $R=0$.
\end{lemma}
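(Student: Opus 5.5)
The plan is to exploit the fact that $\B=0$ forces the torsion to be pure trace. By \eqref{eq:torsion-split},
\[
T^\ell_{ik}=\tfrac12(\delta_{i\ell}\eta_k-\delta_{k\ell}\eta_i),
\]
so the metric is locally conformally K\"ahler. Every curvature component should then be expressible through $\Phi$ and $\chi$, and the compact integral identities should force these to vanish.

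\textbf{Step 1: collect the pointwise consequences of $\B=0$.} Since $\B=0$, we have $D=0$, $S=0$ and $\Nt=0$.
\begin{itemize}
\item From \eqref{eq:hook-norm}, $Z=0$, i.e. $2G+\frac32\Phi-\frac\chi2I=0$.
\item Splitting into Hermitian and skew-Hermitian parts with $G=H-\Psi$ and $\Phi=F+\Psi$, the skew part reads $-2\Psi+\frac32\Psi=0$. Hence $\Psi=0$, so $\Phi$ is Hermitian and $G=H=-\frac34F+\frac\chi4I$.
\item The divergence formula in \eqref{eq:N-mixed} gives $w=0$, hence $W=\partial\eta=0$.
\item \eqref{eq:Q-decomp} gives $Q=J(G)$.
\item Inserting the pure-trace torsion into the mixed Bianchi identity \eqref{eq:mixed-bianchi} gives
\[
R_{k\bar j i\bar\ell}-R_{i\bar jk\bar\ell}=\tfrac12(\delta_{i\ell}\Phi_{k\bar j}-\delta_{k\ell}\Phi_{i\bar j}).
\]
Combined with the polarized identity \eqref{eq:polarization} and curvature reality, this determines $R$ as an explicit linear expression in $\Phi$ and $\chi$, of the form (Kähler-type symmetric part) plus $\Phi\otimes g$ terms. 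Consequently, to prove $R=0$ it suffices to show $\Phi=0$.
\end{itemize}

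\textbf{Step 2: use the scalar identity.} Proposition~\ref{prop:scalar} with $t=0$, $S=0$ and $\Nt=0$ gives $\PP\chi=0$. Lemma~\ref{lem:stokes} then gives $\int_M|a_\chi|^2=0$, i.e.
\[
\partial\chi=\tfrac\chi2\eta .
\]
By the path argument in the proof of Proposition~\ref{prop:compact-estimate}, either $\chi\equiv0$ or $\chi$ has no zero. In the latter case $\eta$ is $\partial$ of a logarithm, $\partial\log|\chi|=\eta/2$. Then $\chi^{-1}$ (up to constant, a power of $|\chi|$) gives a global conformal factor. With respect to the conformal metric $\tilde g=|\chi|^{-1}g$ (appropriately normalized), the torsion one-form becomes zero, so $\tilde g$ is K\"ahler. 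In this case I would rederive $\chi=0$ from \eqref{eq:stokes-chi} with $f=1$, namely $\int\chi=\int|\eta|^2$, together with the conformal change formula for Chern curvature under which zero holomorphic sectional curvature forces $\tilde g$ to have holomorphic sectional curvature $\,-\sqrt{-1}\partial\bar\partial\log|\chi|$-type, leading to a contradiction by a maximum principle.

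\textbf{Step 3: show $F_0=0$ and $\chi=0$.} I would use the transport equation $\nabla'Q=\frac12\eta Q$ from \eqref{eq:Q-transport} with $\B=0$, together with $Q=J(G)$. This gives $\nabla'G=\frac12\eta\otimes G$. Combined with $\Curl G=0$ from \eqref{eq:sc-curls}, this says that $\nabla'G$ is both ``pure $\eta$'' and curl-free, which should force $\eta\wedge G$-type expressions to vanish. I would then integrate $|G|^2$ with the weight $e^{-\text{(potential of }\eta)}$ implicit in $\PP$: compute $\LL|G|^2$ from $\nabla'G=\frac12\eta G$ and the commutator (the curvature term $-2a_2$ in \eqref{eq:sc-curvG} now vanishes since $\Psi=0$), and obtain an identity of the form $\PP|G|^2=-|\nabla''G|^2+(\dots)$ whose integral forces $\nabla''G=0$ and then $G=0$. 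Once $G=0$, we get $F=\frac\chi3I$. The Step 1 formula then gives $R$ as a multiple of $\chi$ times a fixed tensor, and the polarization identity kills that tensor unless $\chi=0$. Hence $\Phi=0$ and $R=0$.

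\textbf{Main obstacle.} I expect Step 3 to be the hardest: extracting $G=0$ from $\nabla'G=\frac12\eta G$ and $\Curl G=0$ via a clean Bochner identity, since the scalar identity alone carries no further information here. A direct check confirms this: \eqref{eq:sc-Lchi} with these substitutions reduces to $\LL\chi=\frac12\chi^2-\frac14\chi|\eta|^2$, i.e. $\PP\chi=0$ again. If the Bochner route fails, I would fall back on the locally conformally K\"ahler structure and cite \cite{CCN} for the $c=0$ case.
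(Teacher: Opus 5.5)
Your Steps 1 and 2 are correct. Step 2 is a legitimate alternative to the paper's derivation. You obtain $\partial\chi=\frac\chi2\eta$ from Proposition~\ref{prop:scalar} together with $\int_M\chi\PP\chi=\int_M|a_\chi|^2$. The paper instead gets the equivalent equation $da+a\vartheta=0$, with $a=-\chi/2$, from closedness of $\rho^{\rm form}$ and $\mathfrak a$ plus Lefschetz injectivity.

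The gap is in how you close the argument. You send the case $\chi\equiv0$ into Step 3, which is unproven, and its final step is false. If $G=0$, then $\Phi=\frac\chi3 I$, and the curvature determined in Step 1 becomes $R_{i\bar jk\bar l}=\frac{\chi}{12}(\delta_{ij}\delta_{kl}-\delta_{il}\delta_{kj})$. This tensor has zero holomorphic sectional curvature for every value of $\chi$. In fact the pure-trace curvature expression satisfies \eqref{eq:polarization} identically, so polarization can never force $\chi=0$. The nowhere-zero case is also only gestured at, and your conformal factor is inverted. Since $\tilde\eta=\eta-2\partial u$ for $\tilde g=e^ug$, the K\"ahler metric is $|\chi|\,g$, not $|\chi|^{-1}g$.

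Both cases close with tools you already have, and Step 3 can be discarded.

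If $\chi\equiv0$, then \eqref{eq:stokes-chi} with weight $1$ gives $\int_M|\eta|^2=\int_M\chi=0$. Hence $\eta=0$, so $\Phi=0$, and $R=0$ by Step 1.

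If $\chi$ has no zero, then $\eta=2\partial\log|\chi|$. Since $\Phi_{i\bar j}=\partial_{\bar j}\eta_i$ in holomorphic coordinates, this gives $\chi=\tr\Phi=2\Delta\log|\chi|$. At a maximum of $\log|\chi|$ this yields $\chi\le0$, and at a minimum it yields $\chi\ge0$. That contradicts $\chi$ having constant nonzero sign on the connected manifold $M$. The paper reaches the same contradiction by integrating $\Delta_{g_0}f=c_0$ against the volume of the K\"ahler metric $g_0=|a|g$.

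With these two replacements your argument is a complete proof. It is slightly shorter than the paper's, because it reuses Proposition~\ref{prop:scalar} instead of redoing the Ricci-form and Lefschetz computation.
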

\begin{proof}
By \eqref{eq:torsion-split}, $T^l_{ik}=(\delta_{il}\eta_k-\delta_{kl}\eta_i)/2$. Put $q=-\eta/2$ and $\vartheta=q+\bar q$. In holomorphic coordinates this says
\[
\partial_i g_{k\bar l}-\partial_k g_{i\bar l}
=q_i g_{k\bar l}-q_k g_{i\bar l},
\]
so $\partial\omega=q\wedge\omega$ and $d\omega=\vartheta\wedge\omega$. Thus $d\vartheta\wedge\omega=0$. Multiplication by $\omega$ is injective on real two-forms in real dimension six. For example, if $L\alpha=\omega\wedge\alpha$ and $\Lambda$ is its symplectic contraction, then $[\Lambda,L]\alpha=(3-k)\alpha$ on $k$-forms. If $k=2$ and $L\alpha=0$, we obtain $\alpha=-L\Lambda\alpha$, and then $\Lambda\alpha=-3\Lambda\alpha$, so $\alpha=0$. It follows that $d\vartheta=0$.

Locally write $\vartheta=df$ with $f$ real. The differences of these local functions are constant on overlaps; hence
\[
\mathfrak a=\sqrt{-1}\,\partial\bp f,\qquad
A_{i\bar j}=\partial_i\partial_{\bar j}f,\qquad a=\tr_g\mathfrak a
\]
are globally defined. Since $\eta=-2\partial f$, the mixed torsion identity gives
\[
T^l_{ik,\bar j}=\delta_{kl}A_{i\bar j}-\delta_{il}A_{k\bar j}.
\]
Together with curvature reality and \eqref{eq:polarization}, this determines the curvature as
\begin{equation}\label{eq:pure-trace-R}
R_{i\bar jk\bar l}
=-\frac34A_{i\bar j}\delta_{kl}
+\frac14A_{k\bar j}\delta_{il}
+\frac14A_{i\bar l}\delta_{kj}
+\frac14A_{k\bar l}\delta_{ij}.
\end{equation}
Indeed, the right side has the required reality, its holomorphic sectional curvature is zero, and its difference under interchange of $i,k$ is $-A_{i\bar j}\delta_{kl}+A_{k\bar j}\delta_{il}$. Subtracting it from $R$ leaves a tensor symmetric in both the unbarred and barred pairs. Its diagonal polynomial vanishes, so polarization makes the difference zero.

Taking the first Ricci contraction in \eqref{eq:pure-trace-R}, we obtain
\[
\rho^{\rm form}=\frac14a\omega-\frac74\mathfrak a.
\]
Both $\rho^{\rm form}$ and $\mathfrak a$ are closed. Thus $(da+a\vartheta)\wedge\omega=0$, and injectivity on one-forms gives $da+a\vartheta=0$. Along paths this is a scalar linear equation, so $a$ is either identically zero or nowhere zero.

In the first case, $\chi=-2a=0$, and \eqref{eq:stokes-chi} with weight one gives $\eta=0$. Equation~\eqref{eq:pure-trace-R} then gives $R=0$. In the second case, $f=-\log|a|$ is a global real function satisfying $df=\vartheta$. The metric $g_0=e^{-f}g$ is K\"ahler, and $ae^f=c_0$ is a nonzero real constant. Since $a=\Delta_g f=e^{-f}\Delta_{g_0}f$, we obtain $\Delta_{g_0}f=c_0$. Integration against the K\"ahler volume gives $0=c_0\operatorname{Vol}_{g_0}(M)$, a contradiction. This completes the proof.
\end{proof}

\begin{proposition}\label{prop:rank}
The maximum output rank of $Q$ is either zero or two.
\end{proposition}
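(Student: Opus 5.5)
We prove the statement in two parts: first that the maximal rank cannot be one, then that it cannot be three. By Lemma~\ref{lem:continuation} the maximal rank is attained on an open dense set, so it suffices to rule out the values $1$ and $3$ for $r=\max_x\rank Q_x$.

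\emph{Rank one.} Suppose $r=1$, so $\rank Q\le1$ everywhere and $Q\not\equiv0$. There are two cases.
\begin{itemize}
\item If $3|S|^2+2|\Nt|^2\not\equiv0$, the last assertion of Proposition~\ref{prop:compact-estimate} gives $Q=0$, which is a contradiction.
\item If $3|S|^2+2|\Nt|^2\equiv0$, then $S\equiv0$. The final sentence of Lemma~\ref{lem:algebra} says that at a point with $\rank Q\le1$, $Q\neq0$ forces $S\neq0$. Hence $Q\equiv0$, again a contradiction.
\end{itemize}
So this part only assembles results already proved.

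\emph{Rank three.} This is the main obstacle. The plan is to exploit the transport law $\nabla'Q=CQ$ from \eqref{eq:Q-transport}. In a holomorphic frame of the source $\Sym^2E$, the conjugate of $Q$ satisfies a $\bp$-system $\bp A_0=A_1A_0$, exactly as in the proof of Lemma~\ref{lem:continuation}. Assuming $r=3$, I would do the following.
\begin{enumerate}
\item Fix a holomorphic $3$-plane field $P\subset\Sym^2\bar E$ on which $Q$ is generically injective. Locally one can simply take three holomorphic coordinate quadratic monomials, chosen so that the composition is nonzero somewhere.
\item Form $\sigma=\det(Q|_P)$. It is a local section of a line bundle $\Lambda^3(E\otimes\KM)\otimes\Lambda^3P^*$, whose source factor is holomorphic (antiholomorphic in the conjugated picture).
\item Take the determinant of the transport law. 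This gives $\nabla'\sigma=(\tr C)\sigma$. Here $\tr C$ is a fixed multiple of $\eta$, read off from \eqref{eq:C-def}.
\item Since $\sigma\not\equiv0$, Lemma~\ref{lem:continuation} shows that $\sigma$ vanishes only on a closed set with empty interior.
\item On the open set $\{\sigma\neq0\}$, compute $\sqrt{-1}\partial\bp\log|\sigma|^2$. Because the source factor is holomorphic, the result is a fixed linear combination of the curvature of $\Lambda^3(E\otimes\KM)$, namely a multiple of the Chern--Ricci form $\rho$, and of $\bp\eta$, namely $\Phi^*$.
\end{enumerate}
Different choices of $P$ change $\log|\sigma|^2$ only by $\log$ of the modulus squared of a holomorphic function. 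Hence the resulting $(1,1)$-identity is global on the open dense set. Its trace relates $\Delta\log|\sigma|^2$ to a combination of $\tr\rho=\chi/2$ and $\chi$, with $\eta$-drift terms.

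Next I would integrate this trace identity over $M$. The singular set is handled by cutting off near $\{\sigma=0\}$, using that $\log|\sigma|^2$ is locally bounded above and plurisubharmonic up to smooth terms, so the cut-off contributions have a sign. Stokes is applied in the form \eqref{eq:stokes-L}--\eqref{eq:stokes-chi}. The expected outcome is that $\int_M\chi$ plus a nonnegative boundary or Lelong-type contribution equals a negative multiple of itself, or of $\int|\eta|^2$-type terms, forcing $\int_M\chi\le0$. On the other hand, if $3|S|^2+2|\Nt|^2\not\equiv0$, Proposition~\ref{prop:compact-estimate} gives $\chi\ge2t\ge0$. Combining the two, $\chi\equiv 2t\equiv0$, so $\B=0$, and Lemma~\ref{lem:pure-trace} gives $R=0$, contradicting $Q\neq0$. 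If instead $3|S|^2+2|\Nt|^2\equiv0$, then $S=0$ and $Q=J(G)$ by \eqref{eq:Q-decomp}. In this subcase I would use the explicit form of $J(G)$ to show algebraically that $\rank J(G)\le2$ whenever $\tr G=0$, or else feed $S=0$ back into \eqref{eq:compact-estimate}, run through the $\vartheta=1$ case of \eqref{eq:weighted-energy}, to again get $t=0$.

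I expect the delicate points to be two. The first is getting the sign and constants of $\tr C$ and of the curvature term right, so that the integral identity genuinely conflicts with $\chi\ge0$. The second is controlling the zero set of $\sigma$ in the integration. If the sign turns out wrong, my fallback is a purely algebraic route. Polarization \eqref{eq:polarization} makes $r_{ik}(\bar Y)$ skew-symmetric and quadratic in $\bar Y$, so the image of $Q$ is spanned by values of a quadratic map $\bar E\to E\otimes\KM$. Combined with the trace constraint $\tr G=0$ from \eqref{eq:SG} and the structure $Q=-\tfrac12S+J(G)$, I would test whether surjectivity is compatible with the transport identity at a generic point.
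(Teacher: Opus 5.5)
Your rank-one argument is correct and matches the paper. The paper simply invokes the last assertion of Proposition~\ref{prop:compact-estimate}, and the proof of that assertion already covers your second subcase via Lemma~\ref{lem:algebra}.

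The rank-three part has a genuine gap. Your integration step needs $\log|\sigma|^2$ to be a global function, and it is not one. Different $3\times3$ minors of $Q$ satisfy the same equation $\nabla'\sigma=(\tr C)\sigma$, so they differ by (anti)holomorphic factors. Hence $\sigma$ only globalizes as a section of $\KM^2\otimes(\det\bar V)^{-1}$, where $V$ is the quotient of $\Sym^2E$ by the conjugate of $\ker Q$. This $V$ exists only on the rank-three locus. Measuring $\sigma$ with a global metric, for example via $|\Lambda^3Q|^2$, brings in the curvature of this quotient. That curvature consists of traces of the $\rho^{(2)}$-action over a varying $3$-plane in $\Sym^2E$, and these have no sign. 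So the inequality $\int_M\chi\le0$ you hope for is not available, and no computation supports the stated ``expected outcome''.

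Your fallback for the subcase $S\equiv0$ is also false. For $Q=J(G)$ one has $Q(y)=\frac13\,y\times Gy$. The trace-free matrix $G=\diag(1,-1,0)$ then gives the three independent rows $y_2y_3$, $y_1y_3$, $-2y_1y_2$, so trace-free $G$ can produce output rank three.

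What is missing is the full endomorphism content of $\nabla'Q=CQ$; taking the determinant keeps only its trace. Differentiate the transport law once more and use that the Chern connection has no $(2,0)$-curvature. This gives $J_{pq}Q=0$, where $J$ is the compatibility endomorphism of \eqref{eq:J-Q}; up to sign it is the $(2,0)$-curvature of $\nabla'-C$ on the output $E\otimes\KM$. Where $Q$ is onto, this forces $J=0$, and by density $J\equiv0$. The argument then runs as follows.
\begin{itemize}
\item The trace of $J$ is $\frac32\partial\eta$, so $W=0$. You could already have read this off by applying $\partial$ to your own identity $\nabla'\sigma=(\tr C)\sigma$, that is, from its $(2,0)$-part rather than its $(1,1)$-part.
\item The remaining components of $J=0$, combined with the torsion Bianchi identity, give $\nabla'\B=\frac12\eta\otimes\B$, i.e.\ $\Nt=0$.
\item If $\B\equiv0$, Lemma~\ref{lem:pure-trace} gives $R=0$.
\item Otherwise $\{\B\ne0\}$ is dense, and \eqref{eq:mixed-lower-bound} forces $G=0$. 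Hence $Q=-S/2$ with $S\not\equiv0$.
\item Then \eqref{eq:compact-estimate} with $r_G=0$ gives $\int_Mt^3=0$, a contradiction.
\end{itemize}
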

\begin{proof}
The rank-one case is excluded by Proposition~\ref{prop:compact-estimate}. Suppose the maximum rank is three. Differentiating $\nabla'Q=CQ$ and using the vanishing of the pure Chern curvature gives
\begin{equation}\label{eq:J-Q}
J_{pq}Q=0,\qquad
J_{pq}=(\nabla_p C)_q-(\nabla_q C)_p+C_{T(e_p,e_q)}-[C_p,C_q].
\end{equation}
Lemma~\ref{lem:continuation} gives a dense rank-three locus. There $J=0$, and continuity gives $J=0$ on $M$. Since $\tr C=3\eta/2$, we have $\tr J=3\partial\eta/2$, and therefore $W=0$.

To use the full equation $J=0$, put $L_p=T(e_p,\cdot)$ and $H_p=-e_p\otimes\eta/2+\eta_pI$, so $C=L+H$. For an endomorphism-valued one-form,
\[
(d^\nabla L)_{pq}=(\nabla_pL)_q-(\nabla_qL)_p+L_{T(e_p,e_q)}.
\]
The pure torsion Bianchi identity gives
\begin{equation}\label{eq:L-compat}
(d^\nabla L)_{pq}-[L_p,L_q]=-(\nabla_{\cdot}T)(e_p,e_q).
\end{equation}
In components, the quadratic terms are $T^s_{pq}T^a_{sb}-T^a_{ps}T^s_{qb}+T^a_{qs}T^s_{pb}$, exactly the three cyclic torsion products. The remaining terms are
\begin{align*}
(d^\nabla H)_{pq}
&=-\tfrac12e_q\otimes\nabla_p\eta+\tfrac12e_p\otimes\nabla_q\eta
-\tfrac12T(e_p,e_q)\otimes\eta+W_{pq}I,\\
[L_p,H_q]+[H_p,L_q]
&=-T(e_p,e_q)\otimes\eta+\tfrac12e_q\otimes(\eta\circ L_p)
-\tfrac12e_p\otimes(\eta\circ L_q),\\
[H_p,H_q]&=\tfrac14(\eta_qe_p\otimes\eta-\eta_pe_q\otimes\eta).
\end{align*}
Substitution in $J=0$, with $W=0$, yields
\[
(\nabla_bT)^a_{pq}=\frac12\eta_bT^a_{pq}
+\frac12(\delta_{pa}U_{qb}-\delta_{qa}U_{pb}),\qquad
U_{qb}=\eta_{b,q}+\sum_s\eta_sT^s_{qb}-\frac12\eta_q\eta_b.
\]
Contract with $\eps_{pql}/2$ and symmetrize in $a,l$. The $U$-term is skew in these indices, and hence
\begin{equation}\label{eq:fullrank-recurrent}
\nabla'_b\B=\frac12\eta_b\B,\qquad \Nt=0.
\end{equation}

If $\B=0$ identically, Lemma~\ref{lem:pure-trace} contradicts the assumed rank of $Q$. Otherwise the conjugate metric dual of \eqref{eq:fullrank-recurrent} is a scalar-coefficient barred system on the holomorphic dual of $\Sym^2 E\otimes\KM$. The proof of Lemma~\ref{lem:continuation} shows that $\{\B\ne0\}$ is dense. By \eqref{eq:mixed-lower-bound}, $G=0$ on this set, and hence on $M$. Now \eqref{eq:Q-decomp} gives $Q=-S/2$, so $S$ is nonzero somewhere. Apply \eqref{eq:compact-estimate}. Its left side is zero, while its right side contains $\frac83\int_Mt^3$ and only nonnegative terms. Thus $\B=0$, again a contradiction. The rank-three case is impossible.
\end{proof}

We will use the following positive density twice. Its construction does not change the metric.
\begin{lemma}\label{lem:adjoint-density}
There is a smooth positive function $v$ on $M$ such that
\begin{equation}\label{eq:adjoint-density}
\int_Mv\,dV=1,\qquad \int_Mv\Delta u\,dV=0
\end{equation}
for every smooth function $u$. If $f=\frac12\log v$ and $\eta_f=\eta-2\partial f$, then
\begin{equation}\label{eq:weighted-trace}
\int_Mv\chi\,dV=\int_Mv|\eta_f|^2\,dV.
\end{equation}
Moreover, for $\Omega=v\,\omega^2/2$,
\begin{equation}\label{eq:Omega-weight}
\partial\Omega=-\eta_f\wedge\Omega,\qquad \partial\bp\Omega=0.
\end{equation}
\end{lemma}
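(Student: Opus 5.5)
The density $v$ will come from the kernel of the formal adjoint of $\Delta$. The operator $\Delta u=g^{i\bar j}\partial_i\partial_{\bar j}u$ is a real second-order elliptic operator with no zeroth-order term. The elliptic operators under consideration, after an index-zero Fredholm argument, are as follows.

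\emph{Step 1 (existence and positivity).} Let $\Delta^*$ be the formal $L^2(dV)$-adjoint of $\Delta$.
\begin{itemize}
\item By the strong maximum principle on a compact connected manifold, $\ker\Delta$ consists of the constants. Hence $\dim\ker\Delta^*=\dim\ker\Delta=1$, since the Fredholm index is zero (homotopy to the Laplace--Beltrami operator).
\item A nonzero element $v$ of $\ker\Delta^*$ does not change sign. The standard argument: for $\Delta$ with zero constant term, the image of $\Delta$ is $\{h:\int vh=0\}$. If $v$ changed sign, choose $h\ge0$, $h\not\equiv0$, with $\int vh=0$; then $\Delta u=h$ would be solvable, contradicting the maximum principle at a maximum of $u$.
\item Next, $v>0$ everywhere by the strong maximum principle (Hopf) applied to $\Delta^*v=0$, which is again elliptic with smooth coefficients. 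Equivalently, this is the Gauduchon-type argument.
\item Normalize so that $\int v\,dV=1$. Then $\int v\Delta u=\int u\Delta^*v=0$. Smoothness follows from elliptic regularity.
\end{itemize}

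\emph{Step 2 (the form identity).} I would translate $\Delta^*v=0$ into forms. In dimension three,
\[
\sqrt{-1}\,\partial\bp u\wedge\omega^2/2=\Delta u\,\omega^3/6 .
\]
Hence $\int v\Delta u\,dV=\int\sqrt{-1}\,\partial\bp u\wedge\Omega$ with $\Omega=v\omega^2/2$. Integrating by parts twice, the condition for all $u$ is exactly $\partial\bp\Omega=0$; this is Gauduchon's condition for $v^{1/2}$-conformal rescaling in the sense of $(n-1)$-forms.

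For the first identity in \eqref{eq:Omega-weight}:
\[
\partial\Omega=\partial v\wedge\omega^2/2+v\,\partial\omega^2/2=(\partial v/v-\eta)\wedge\Omega .
\]
Since $\partial v/v=2\partial f$, this equals $-\eta_f\wedge\Omega$.

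\emph{Step 3 (weighted trace).} I would redo \eqref{eq:stokes-chi} with the weight $v$. From \eqref{eq:stokes-chi} with $f=v$,
\[
\int v\chi=\int v|\eta|^2-\re\int\langle\eta,\partial v\rangle=\int v\bigl(|\eta|^2-2\re\langle\eta,\partial f\rangle\bigr).
\]
Expanding $|\eta_f|^2=|\eta|^2-4\re\langle\eta,\partial f\rangle+4|\partial f|^2$, it remains to show
\[
\int v\bigl(2\re\langle\eta,\partial f\rangle-4|\partial f|^2\bigr)=0 .
\]
This follows by applying \eqref{eq:adjoint-density} to $u=\log v=2f$:
\[
0=\int v\Delta(2f)=\int v\Delta\log v .
\]
Integrating by parts via \eqref{eq:stokes-L} (the $\eta$ drift appears through \eqref{eq:volume-div}), one obtains
\[
\int v\Delta\log v=\int v\bigl(-|\partial\log v|^2+\re\langle\eta,\partial\log v\rangle\bigr),
\]
which after the substitution $\log v=2f$ gives exactly the needed identity. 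I expect this bookkeeping of factors of $2$ and of the drift sign to be the main place errors could occur, so I would check it against the convention in \eqref{eq:stokes-L}. The positivity in Step 1 is standard, but it is the conceptual crux.
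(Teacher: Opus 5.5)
Your proof is correct. Steps 2 and 3 coincide with the paper's argument. The same form identity $\sqrt{-1}\,\partial\bp u\wedge\omega^2/2=\Delta u\,dV$ gives $\partial\bp\Omega=0$, and the torsion-trace identity gives $\partial\Omega=-\eta_f\wedge\Omega$. The weighted trace comes from \eqref{eq:stokes-chi} with weight $v$, combined with $0=\int_Mv\Delta f=\int_Mv\re\ip{\partial f}{\eta}-2\int_Mv|\partial f|^2$. That is exactly your computation, and your factors of $2$ and drift sign are right.

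The genuine difference is Step 1. You follow Gauduchon's Fredholm route: $\ker\Delta$ is the constants, the index is zero, so $\ker\Delta^*$ is a line, and its generator cannot change sign because $\operatorname{Im}\Delta=\{h:\int_Mvh=0\}$. The paper avoids index theory altogether. It defines $\ell(\Delta u+c)=c$ on the subspace $\{\Delta u+c\}\subset C^0(M,\R)$ and checks $|\ell|\leq1$ by evaluating at the extrema of $u$. It then extends $\ell$ by Hahn--Banach and obtains from Riesz a probability measure $\mu$ with $\int_M\Delta u\,d\mu=0$. Nonnegativity is thus built in from the start, at the price of invoking elliptic regularity for a measure-valued distributional solution of $\Delta^*\mu=0$. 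Your route costs the index computation but also yields uniqueness of $v$, which the paper never needs.

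Two small points in your Step 1 should be tightened.

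\emph{Sign argument.} At a maximum $x_0$ of $u$ you only get $h(x_0)\leq0$. So you need either the strong maximum principle for $\Delta u=h\geq0$, or a choice of $h>0$ everywhere with $\int_Mvh=0$. The latter is possible because $h\mapsto\int_Mvh$ takes both signs on the convex cone of positive functions.

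\emph{Strict positivity.} $\Delta^*$ has a zeroth-order coefficient $c$ of no definite sign, which is exactly the issue the paper flags. Hopf's principle still applies because the minimum value is $0$. Write $(\Delta^*-c^+)v=-c^+v\leq0$; the operator $\Delta^*-c^+$ has zeroth-order coefficient $-c^-\leq0$, and $v$ attains a nonpositive interior minimum. The paper instead conjugates by $e^{-A|x|^2}$ to make the zeroth-order term negative. Either fix works.
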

\begin{proof}
On the real subspace $\{\Delta u+c:u\in C^\infty(M,\R),\ c\in\R\}$ of $C^0(M,\R)$, define $\ell(\Delta u+c)=c$. Evaluation at a maximum and a minimum of $u$ proves both that this is well defined and that
\[
\min(\Delta u+c)\leq c\leq\max(\Delta u+c).
\]
Thus $\ell$ has norm one and $\ell(1)=1$. Extend it by the real Hahn--Banach theorem without increasing its norm. The extension is positive: if $b\geq0$ and $B_0=\max b$, then $\ell(B_0-b)\leq\|B_0-b\|_\infty\leq B_0$, so $\ell(b)\geq0$. The Riesz representation theorem gives a probability measure $\mu$ with $\int_M\Delta u\,d\mu=0$. Distributional elliptic regularity gives $d\mu=v\,dV$ with $v$ smooth and nonnegative.

For completeness, $v$ is strictly positive even though the zeroth-order coefficient of $\Delta^*$ need not have a sign. In a real coordinate ball about a putative zero, write $\Delta^*=a^{ij}\partial_i\partial_j+b^i\partial_i+d$, where $(a^{ij})$ is positive definite. Choose $A>0$ large and then shrink the ball so that $p=e^{-A|x|^2}$ satisfies
\[
p^{-1}\Delta^*p=-2A\tr a+4A^2a^{ij}x_ix_j-2Ab^ix_i+d<0.
\]
The conjugated operator $p^{-1}\Delta^*(p\,\cdot)$ has nonpositive zeroth-order term. The strong maximum principle applied to $-v/p$ makes it zero near an interior zero. Thus the zero set of $v$ is open and closed, contrary to connectedness and $\int_Mv=1$. This proves \eqref{eq:adjoint-density}.

Apply the Stokes identities to the weight $v=e^{2f}$. They give
\[
\int_Mv\chi=\int_Mv\bigl(|\eta|^2-2\re\ip{\eta}{\partial f}\bigr),\qquad
0=\int_Mv\Delta f=\int_Mv\re\ip{\partial f}{\eta}-2\int_Mv|\partial f|^2.
\]
Their combination is \eqref{eq:weighted-trace}. The torsion trace identity $\partial(\omega^2/2)=-\eta\wedge\omega^2/2$ gives the first formula in \eqref{eq:Omega-weight}. Finally,
\[
\sqrt{-1}\,\partial\bp u\wedge\omega^2/2=(\Delta u)dV.
\]
Multiplying by $v$, integrating, and using \eqref{eq:adjoint-density} proves $\partial\bp\Omega=0$ distributionally, hence smoothly.
\end{proof}

\begin{proposition}\label{prop:Qzero}
If $Q=0$ identically, then $R=0$ identically.
\end{proposition}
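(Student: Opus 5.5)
The plan is to reduce the statement to $\B\equiv0$ and then apply Lemma~\ref{lem:pure-trace}. If $Q\equiv0$, then \eqref{eq:SG} gives $S=0$, and \eqref{eq:Q-decomp} gives $G=2\sum\eps Q=0$. Hence $r_G\equiv0$, $H=0$ and $\Phi^*=2\rho$. I split according to whether $3|S|^2+2|\Nt|^2=2|\Nt|^2$ vanishes identically.

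\emph{Case 1: $\Nt\not\equiv0$.} Proposition~\ref{prop:compact-estimate} applies directly. The left side of \eqref{eq:compact-estimate} is $\int_M t\,r_G=0$, while the right side contains $\frac83\int_M t^3$ and otherwise only nonnegative terms, because $\psi\geq0$. Hence $t\equiv0$, so $\B=0$, and Lemma~\ref{lem:pure-trace} gives $R=0$.

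\emph{Case 2: $\Nt\equiv0$.} Proposition~\ref{prop:scalar} now gives $\PP\psi=0$ for $\psi=\chi-2t$. By Lemma~\ref{lem:stokes}, $\int_M|a_\psi|^2=\int_M\psi\PP\psi=0$, so $\partial\psi=\frac{\psi}{2}\eta$ everywhere. Since this is a scalar linear equation along paths, the same argument as in the proof of Proposition~\ref{prop:compact-estimate} shows that $\psi$ is either identically zero or nowhere zero.

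\emph{Subcase 2a: $\psi\equiv0$.} Then $\chi=2t$. Take $\vartheta=1$ in \eqref{eq:weighted-energy}. With $r_G=0$ and $|\B^*w|^2\leq t|w|^2$, every term of $\mathcal E$ is nonnegative except possibly $(\chi t^2-t|\KB_0|^2)/3$. This term equals $(2t^3-t|\KB_0|^2)/3\geq\frac{4}{9}t^3$, using $|\KB_0|^2\leq\frac23t^2$. Therefore $\int_M t^3\leq0$, so $\B=0$, and we conclude as in Case 1.

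\emph{Subcase 2b: $\psi$ nowhere zero.} Then $\eta=2\partial\log|\psi|$. I would use the explicit weight $v=c\,\psi^2$, with $c>0$ chosen so that $\int_M v=1$. The identity $\partial\bigl(\psi^2\omega^2\bigr)=0$ follows from $\partial\omega^2=-\eta\wedge\omega^2$, and it gives $\partial\bp(v\omega^2)=0$. Hence $\int_M v\Delta u=0$ for all smooth $u$, so $v$ has the properties of the density in Lemma~\ref{lem:adjoint-density}. For $f=\frac12\log v$ we get $\eta_f=\eta-2\partial f=0$, and \eqref{eq:weighted-trace} yields $\int_M v\chi=0$. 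Consequently $\int_M v\psi=-2\int_M vt\leq0$.
\begin{itemize}
\item If $\psi>0$, this forces $\int_M v\psi=0$ against $\psi>0$, which is a contradiction.
\item If $\psi<0$, I would return to \eqref{eq:weighted-energy} with a weight adapted to $\psi$. My first try is $\vartheta=v$, or a power $|\psi|^k$. The aim is that the boundary term $\re\int_M t\ip{\partial\vartheta}{a_t+2\B^*w}$, rewritten via $\partial\vartheta=k\vartheta\eta/2$, combines with the completed square $|a_t+\B^*w|^2$. If it does, one is left with $\int_M\vartheta\bigl(\frac{\chi t^2}{3}+\dots\bigr)$, and $\chi$ is controlled by $\int_M v\chi=0$ together with $\chi=\psi+2t$.
\end{itemize}

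I expect this last sub-subcase ($\psi<0$ everywhere) to be the main obstacle. There $\chi$ may be negative, so the unweighted energy has no sign, and the proof has to exploit the conformal-balanced structure $\eta=2\partial\log|\psi|$. A fallback is to pass to the conformal metric $|\psi|^{\alpha}g$, for which $\eta$ vanishes, and rerun the energy argument there.
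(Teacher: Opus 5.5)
There is a genuine gap, and a better choice of weight will not close it: the reduction to $\B\equiv0$ cannot succeed, because $Q\equiv0$ does not imply $\B\equiv0$.

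For a counterexample, take a compact quotient $\Gamma\backslash SL(2,\C)$ with the left-invariant metric that makes a basis $e_1,e_2,e_3$ of $\mathfrak{sl}(2,\C)$ unitary, where $[e_i,e_j]=\sum_k\eps_{ijk}e_k$. This frame is Chern parallel, so $R=0$; in particular the holomorphic sectional curvature and $Q$ both vanish. Moreover $T^\ell_{ik}=-\eps_{ik\ell}$ and $\eta=0$, so $\B=-I$ times the parallel unit section of $\KM$. Hence $\Nt=0$, $t=3$, $\chi=0$ and $\psi=-6$. This is exactly your remaining branch $\psi<0$, with $\B\neq0$. There one also has $a_t=0$, $w=0$, $F_0=0$, $\KB_0=0$ and $\chi=0$, so $\mathcal E\equiv0$, and no weight $\vartheta$ in \eqref{eq:weighted-energy} can give a contradiction. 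The conformal fallback does not help either. In general a nonconstant conformal change destroys the vanishing of the holomorphic sectional curvature, on which all of Section~\ref{sec:integral} rests. In this example it does nothing, since $\eta$ already vanishes. Your Case~1, Subcase~2a and the branch $\psi>0$ are correct, but in the branch $\psi<0$ the conclusion $R=0$ has to come from the curvature itself, not from $\B$.

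The missing idea is the paper's final step. When $Q=0$, $R$ is skew in both its unbarred and its barred pairs. Hence $R_{i\bar jk\bar l}=\sum_{a,b}\eps_{ika}\eps_{jlb}K^R_{a\bar b}$ for a Hermitian matrix $K^R$ with $2K^R={}^t(sI-2\rho)$, where $s=\tr\rho$. The Bianchi argument that produced \eqref{eq:Q-transport} gives $\nabla'K^R=CK^R$. Along any path this integrates to $K^R=PK^R(0)P^*$, so the rank and inertia of $K^R$ are constant on $M$.

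Your Subcase~2b already supplies the rest. Since $G=0$, one has $\Phi=2\rho$, and $\eta=2\partial\log|\psi|$ then gives $\rho_{i\bar j}=\partial_i\partial_{\bar j}\log|\psi|$. At a minimum of $\log|\psi|$ the form $\rho$ is semipositive, and at a maximum it is seminegative. The possible inertias of $2\rho-sI$ in these two situations are disjoint unless $\rho$, and hence $K^R$, vanishes at one of the two points. Constant rank then forces $K^R\equiv0$, so $R=0$, for either sign of $\psi$.

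The paper reaches the same identity $\rho=\partial\bar\partial f$ by a different route, with no case split. There $f=\frac12\log v$, and $\eta=2\partial f$ comes from a determinant section of the holomorphic image bundle of $K^R$, tested against the density of Lemma~\ref{lem:adjoint-density}. With the inertia step added, your case analysis would be a valid alternative proof: the cheap derivation of $\eta=2\partial\log|\psi|$ from $\PP\psi=0$ would replace that determinant-section construction.
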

\begin{proof}
By Lemma~\ref{lem:algebra}, $G=0$, so $\Phi=2\rho$ and $\chi=2s$, where $s=\tr\rho$. The equation $Q=0$ makes $R$ skew in its barred slots, and curvature reality makes it skew in its unbarred slots as well. In particular $\rho^{(2)}=\rho$ in this case.

\step{The skew curvature and its image.}
Let $F=\Lambda^2E^*\simeq E\otimes\KM$, with the exterior metric. In a unitary frame and its matching volume $\nu$, the forms $\iota_{e_a}\nu$ are unitary. There is a Hermitian coefficient matrix $K^R$ such that
\begin{equation}\label{eq:skew-curvature}
R_{i\bar jk\bar l}=\sum_{a,b}\eps_{ika}\eps_{jlb}K^R_{a\bar b},\qquad
\rho_{i\bar j}=k\delta_{ij}-K^R_{j\bar i},\qquad s=2k,
\end{equation}
where $k=\tr K^R$. The tensor represented by $K^R$ is in $F\otimes\bar F$, and $|R|^2=4|K^R|^2$.

The Bianchi argument used in \eqref{eq:r-transport} applies here with any fixed barred pair, not merely two equal barred arguments. Since the unbarred pair is skew, contraction with $\eps_{ika}/4$ gives
\begin{equation}\label{eq:skew-transport}
\nabla'_pK^R=C_pK^R,\qquad \nabla''_pK^R=K^RC_p^*.
\end{equation}
The second equality follows by conjugation. In a Chern-parallel unitary frame along a real path, these equations read $(K^R)'=CK^R+K^RC^*$. If $P'=CP$ and $P(0)=I$, then
\[
K^R(t)=P(t)K^R(0)P(t)^*.
\]
Thus both rank and inertia are constant. If the rank is zero there is nothing to prove. Suppose it is $r\in\{1,2,3\}$.

Define $D'=\nabla'-C$ and $D''=\bp_F$. Equation~\eqref{eq:skew-transport} says $DK^R=0$. Its constant-rank image $\mathscr S$ is therefore $D$-preserved and, in particular, $\bp$-preserved. It is a holomorphic subbundle: in a holomorphic trivialization, a smooth graph frame has the form $(I,A)^t$; its barred derivative $(0,\bp A)^t$ can lie in the same graph only if $\bp A=0$.

\step{A flat determinant line.}
The mixed torsion identity and $\Phi=2\rho$ give
\[
(\nabla_{\bar j}C_p)^a_i=-2R_{p\bar ji\bar a}-\delta_{pa}\rho_{i\bar j}+2\rho_{p\bar j}\delta_{ai}.
\]
The curvature of $F=E\otimes\KM$ is $R^E-\rho I$. Changing its $(1,0)$ connection by $-C$ consequently gives
\begin{equation}\label{eq:D-curvature}
(R^D)_{p\bar j}{}^a_i
=-R_{p\bar ji\bar a}+\rho_{p\bar j}\delta_{ai}-\delta_{pa}\rho_{i\bar j}
=\delta_{pj}K^R_{a\bar i}-\delta_{ij}K^R_{a\bar p}.
\end{equation}
For the last equality, expand the two alternating symbols in \eqref{eq:skew-curvature}:
\begin{align*}
R_{p\bar ji\bar a}
={}&k(\delta_{pj}\delta_{ia}-\delta_{pa}\delta_{ij})
-\delta_{pj}K^R_{a\bar i}-\delta_{ia}K^R_{j\bar p}\\
&+\delta_{pa}K^R_{j\bar i}+\delta_{ij}K^R_{a\bar p}.
\end{align*}
Every endomorphism in \eqref{eq:D-curvature} has image in $\mathscr S$ and trace $\rho_{p\bar j}$. Since $D$ preserves $\mathscr S$, its quotient has zero mixed curvature. The curvature trace on $\mathscr S$ is therefore $\rho$.

The nondegenerate Hermitian tensor induced by $K^R$ on $\mathscr S$ is $D$-parallel. Its inverse is a possibly indefinite Hermitian metric. The absolute value of its determinant gives a positive metric on $\det\mathscr S$, preserved by the induced connection, whose barred part is the ordinary holomorphic structure. This connection is the Chern connection of that positive line metric. Its curvature is of type $(1,1)$ and has real form $\sqrt{-1}\,\rho_{p\bar j}\varphi_p\wedge\bar\varphi_j$. It follows that the line bundle
\[
\mathscr L=\det\mathscr S\otimes\KM
\]
has a flat Hermitian metric.

\step{Determinant sections.}
The determinant of the inclusion $\mathscr S\longrightarrow F$ is a nowhere-zero holomorphic section
\[
\sigma\in H^0\bigl(M,\Lambda^rF\otimes\KM\otimes\mathscr L^*\bigr).
\]
For $r=1,2,3$, these bundles are, respectively,
\[
E\otimes\KM^2\otimes\mathscr L^*,\qquad
E^*\otimes\KM^2\otimes\mathscr L^*,\qquad
\KM^3\otimes\mathscr L^*.
\]
Equip them with the induced metric and the flat metric on $\mathscr L$. The curvature mean acts on $\sigma$ by $-a_rs$, where
\[
a_1=2,\qquad a_2=\frac52,\qquad a_3=3.
\]
Indeed, for $r=1$ the vector factor lies in the rank-one image of $K^R$, on which the Ricci endomorphism $kI-K^R$ vanishes. The canonical factor contributes $-2s$. For $r=2$, the covector factor annihilates that image, so the dual tangent factor contributes $-k=-s/2$, in addition to $-2s$. For $r=3$, only the canonical power contributes.

Put $u=|\sigma|^2>0$. The Chern product rule and Cauchy--Schwarz give
\begin{equation}\label{eq:det-section-bound}
\Delta\log u
=\frac{|\nabla'\sigma|^2}{u}-\frac{|\partial u|^2}{u^2}
-\frac{\ip{\operatorname{Mean}\sigma}{\sigma}}{u}
\geq a_rs.
\end{equation}
Use the positive density $v$ in Lemma~\ref{lem:adjoint-density}. Since $s=\chi/2$, integration of \eqref{eq:det-section-bound} yields
\[
0\geq\frac{a_r}{2}\int_Mv\chi
=\frac{a_r}{2}\int_Mv|\eta-2\partial f|^2\geq0,
\qquad f=\frac12\log v.
\]
Hence $\eta=2\partial f$, and $\Phi=2\rho$ now gives the global formula
\begin{equation}\label{eq:rho-global-f}
\rho_{i\bar j}=\partial_i\partial_{\bar j}f.
\end{equation}

\step{Comparison at the extrema of $f$.}
At a minimum of $f$, $\rho$ is semipositive; at a maximum it is seminegative. By \eqref{eq:skew-curvature}, the inertia of $2\rho-sI$ is constant, since $2K^R$ is the transpose of $sI-2\rho$. If $\rho\ne0$ is semipositive, order its eigenvalues as $r_1\geq r_2\geq r_3\geq0$. The eigenvalues of $2\rho-sI$ are
\[
r_1-r_2-r_3,\qquad r_2-r_1-r_3,\qquad r_3-r_1-r_2.
\]
Its possible inertias, written as (positive, negative, zero), are
\[
(1,2,0),\quad(0,3,0),\quad(0,2,1),\quad(0,1,2).
\]
For a nonzero seminegative $\rho$, the positive and negative entries are interchanged. The two lists are disjoint. If $\rho=0$ at either extremum, then $K^R=0$ there, also contrary to its positive constant rank. Thus $r>0$ is impossible, and \eqref{eq:skew-curvature} gives $R=0$. This completes the proof of the proposition.
\end{proof}

\vspace{0.3cm}

\section{Geometry of the rank-two locus}\label{sec:geometry}
We next describe the geometry that remains if $R$ is not identically zero. The first observation supplies the continuation needed for the torsion tensor.

\begin{lemma}\label{lem:analytic}
The coefficients of $g$ are real analytic in every holomorphic coordinate chart. Consequently, a tensor formed analytically from $g$, its inverse, and finitely many derivatives will vanish identically if it vanishes on a nonempty open subset of $M$.
\end{lemma}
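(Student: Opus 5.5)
The plan is to show that vanishing Chern holomorphic sectional curvature forces an elliptic system on $g$ with real analytic coefficients, and then to invoke elliptic regularity of Morrey type. The natural candidate is the curvature itself. In a holomorphic chart the Chern curvature is
\[
R_{i\bar j k\bar \ell}=-\partial_i\partial_{\bar j}g_{k\bar\ell}+g^{p\bar q}\partial_i g_{k\bar q}\,\partial_{\bar j}g_{p\bar\ell}.
\]
The polarized identity \eqref{eq:polarization} is therefore a quasilinear second-order system in the $g_{k\bar\ell}$. Its leading part is the full symmetrization
\[
\partial_i\partial_{\bar j}g_{k\bar\ell}+\partial_k\partial_{\bar j}g_{i\bar\ell}+\partial_i\partial_{\bar\ell}g_{k\bar j}+\partial_k\partial_{\bar\ell}g_{i\bar j}.
\]
This symmetrized part alone is not elliptic, because the components of $\partial\bar\partial g$ outside $\Sym^2\otimes\overline{\Sym^2}$ are not controlled by it. I will supplement it with the identities already established.

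\textbf{Supplementary equations.} By \eqref{eq:mixed-bianchi}, the skew part of $\partial_{\bar j}\partial_i g_{k\bar q}$ in $(i,k)$ is $T_{,\bar j}$. By Lemma~\ref{lem:ricci}, $\rho=-\partial\bar\partial\log\det g$ is tied to $\Phi$ and $\rho^{(2)}$. The first step is to write the full set of second-derivative components $\partial_i\partial_{\bar j}g_{k\bar\ell}$ modulo lower order. The polarization equation fixes the $\Sym^2\otimes\overline{\Sym^2}$ part to be of lower order, which leaves the skew-in-$(i,k)$ or skew-in-$(j,\ell)$ parts. Those are $\partial_{\bar j}$ of the torsion, together with its conjugate.

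\textbf{Elliptic system for $(g,T)$.} The second step is to augment the unknowns by the torsion $T$, or equivalently by $\B$ and $\eta$. The equations $\bar\partial$ of $T$, given by \eqref{eq:mixed-bianchi}, and $\partial$ of $T$ together with $\partial$ of the curvature, given by \eqref{eq:exterior-bianchi}, \eqref{eq:Q-transport} and the torsion Bianchi identity, should give a determined first-order system. I would aim for a Douglis--Nirenberg elliptic system for the pair $(g,T)$. Here $\Delta g_{k\bar\ell}=g^{i\bar j}\partial_i\partial_{\bar j}g_{k\bar\ell}$ is expressed through the traced polarization identity, namely $\rho+C+D+\rho^{(2)}=0$ from Lemma~\ref{lem:ricci}, together with first derivatives of $T$ and quadratic terms. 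The aim is an equation of the form
\[
\Delta g=\mathcal F\bigl(g,\partial g,\bar\partial g,\nabla T\bigr),
\]
coupled with the first-order $\bar\partial$- and $\partial$-equations for $T$. Once all nonlinearities are polynomial in $g$, $g^{-1}$ and derivatives, the system has analytic structure, and Morrey's theorem (analytic hypoellipticity for nonlinear elliptic systems with analytic coefficients) gives real analyticity of $g$.

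\textbf{Consequence.} The final assertion is then immediate. A tensor built analytically from $g$, $g^{-1}$ and finitely many derivatives has real analytic coefficients in each chart. Vanishing on an open set therefore propagates through connected overlapping charts, since $M$ is connected.

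\textbf{Main obstacle.} The hard step is verifying ellipticity, that is, that the principal symbol of the combined system is injective. I would first try to check this on the linearization at a flat metric. There the symbol map sends a perturbation $h$ to the full $\partial\bar\partial h$ data, and the identities must recover every component; the concern is the two-form-valued pieces. If direct ellipticity fails, the fallback is to exploit the gauge freedom in the frame, working in harmonic-type holomorphic coordinates or in a Chern-parallel unitary frame, so that the missing components are controlled by the first-order Bianchi systems in the same way the $\bar\partial$-type uniqueness argument of Lemma~\ref{lem:continuation} works.
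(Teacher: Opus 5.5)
Your framework (analytic elliptic regularity, then the identity principle) is the right one, and the closing ``Consequence'' paragraph is fine. However, the step that carries the lemma, namely exhibiting an elliptic system for $g$, is never carried out, and the detour you take rests on a false premise. The full polarized identity \eqref{eq:polarization} \emph{is} elliptic in $g$. Ellipticity asks whether the principal symbol is injective on the nine real unknowns $h=(h_{k\bar\ell})$; it does not ask whether every component $\partial_i\partial_{\bar j}g_{k\bar\ell}$ can be recovered pointwise. At a covector $\zeta\ne0$ the symbol sends $h$ to the bisymmetrization of $\zeta_i\bar\zeta_jh_{k\bar\ell}$. Evaluated on $y\otimes y\otimes\bar y\otimes\bar y$, this is, up to a nonzero constant, $|\zeta\cdot y|^2\,h(y,\bar y)$. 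It vanishes for all $y$ only if $h(y,\bar y)\equiv0$, i.e.\ $h=0$. So the skew pieces you worry about are controlled at the symbol level automatically, and neither torsion unknowns nor gauge fixing are needed. The augmented Douglis--Nirenberg system for $(g,T)$ is therefore both unverified and unnecessary. You never write it down, you leave its ellipticity open yourself, and the identities you cite (\eqref{eq:exterior-bianchi}, \eqref{eq:Q-transport}, the torsion Bianchi identity) only constrain curvature and cyclic sums of $\nabla'T$; they do not form a determined system for $(g,T)$. As written, the proposal does not prove that $g$ is real analytic.

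The missing idea is to extract a \emph{determined} elliptic subsystem, to which Morrey's theorem applies directly. Set $j=i$ in \eqref{eq:polarization} and sum using the fixed Euclidean matrix of the chart. This gives nine real equations $L_0g=F_0(g,dg)$ as in \eqref{eq:analytic-system}, with $F_0$ analytic in $g$ and $dg$. At $\zeta=e_1$ the symbol is
\[
H=\left(\begin{smallmatrix}a&v^*\\ v&D\end{smallmatrix}\right)\longmapsto P_{e_1}H=\left(\begin{smallmatrix}4a+\tr D&2v^*\\ 2v&D\end{smallmatrix}\right),
\qquad \re\tr(H^*P_{e_1}H)\geq|H|^2/2 .
\]
Your traced identity $\rho+C+D+\rho^{(2)}=0$ is essentially this system, with $g^{-1}$ in place of the Euclidean matrix, provided it is kept as an equation in $g$ alone. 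The terms $C=\rho-\Phi$, $D=\rho-\Phi^*$ and $\rho$ are second order in $g$; they supply exactly the extra $3a+\tr D$ and the doubled $v$ in the symbol. Moving them to the right-hand side as ``$\nabla T$'' data makes that side second order in $g$. That is what forces you to promote $T$ to a new unknown and search for extra equations for it. Kept on the left, they already give a positive-definite symbol.
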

\begin{proof}
In a holomorphic chart,
\[
R_{i\bar jk\bar l}=-\partial_i\partial_{\bar j}g_{k\bar l}+Z_{i\bar jk\bar l},\qquad
Z_{i\bar jk\bar l}=\Gamma^a_{ik}\overline{\Gamma^b_{jl}}g_{a\bar b}.
\]
Contract \eqref{eq:polarization} by setting $j=i$ and summing with the fixed Euclidean matrix of the chart, rather than with $g^{-1}$. We obtain $L_0g=F_0(g,dg)$, where
\begin{equation}\label{eq:analytic-system}
(L_0H)_{k\bar l}=\sum_i\bigl(
\partial_i\partial_{\bar i}H_{k\bar l}
+\partial_k\partial_{\bar i}H_{i\bar l}
+\partial_i\partial_{\bar l}H_{k\bar i}
+\partial_k\partial_{\bar l}H_{i\bar i}\bigr).
\end{equation}
The right side is the corresponding sum of the four $Z$-terms and is analytic in the positive Hermitian matrix $g$ and its first derivatives. This is a determined system of nine real equations.

Let $P_\zeta$ be the positive principal symbol, obtained by replacing $\partial_i\partial_{\bar j}$ with $\zeta_i\bar\zeta_j$. By constant unitary equivariance and scaling, it suffices to take $\zeta=e_1$. For
\[
H=\begin{pmatrix}a&v^*\\v&D\end{pmatrix},\qquad a\in\R,\quad D=D^*,
\]
we have
\[
P_{e_1}H=\begin{pmatrix}4a+\tr D&2v^*\\2v&D\end{pmatrix},\qquad
\re\tr(H^*P_{e_1}H)=4a^2+a\tr D+4|v|^2+|D|^2.
\]
Since $|a\tr D|\leq a^2+|D|^2/2$, this is at least $|H|^2/2$. For a real Fourier covector $\xi$, $\zeta=(\xi_x-\sqrt{-1}\xi_y)/2$, so the lower bound is $|\xi|^2|H|^2/8$. The linearization of \eqref{eq:analytic-system} therefore has an elliptic principal part.

We apply the analytic regularity theorem for determined nonlinear elliptic systems, in its second-order form: a $C^{2,\alpha}$ solution of a real analytic system is analytic when its equations of variation are elliptic; see \cite[Theorem 6.7.6]{Morrey}. The hypotheses hold for the nine real entries of $g$ on a sufficiently small coordinate ball. Thus $g$ is real analytic. Holomorphic transition maps and the operations of inversion and differentiation preserve real analyticity. The last assertion follows from the analytic identity principle through overlapping connected charts. This completes the proof of the lemma.
\end{proof}

\begin{lemma}\label{lem:cubic-dense}
At a point where $S=0$ and $\nabla'S=0$, either $\B=0$ or $Q=0$. If $M$ is compact and $R\not\equiv0$, the sets $\{\B\ne0\}$ and $\{S\ne0\}$ are open and dense, and
\[
O_2:=\{\rank Q=2\}
\]
is open and dense.
\end{lemma}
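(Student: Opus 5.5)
Write $S'_{abcp}:=(\nabla'S)$ for the holomorphic covariant derivative. I will first prove the pointwise algebraic claim and then use analyticity and the rank results for the global density statements.

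\emph{Pointwise claim.} Suppose $S=0$ and $\nabla'S=0$ at a point $x$. By \eqref{eq:S-transport} this gives
\[
\Sym_{abc}\sum_{u,v}\eps_{puv}\B_{av}Q_{ubc}=0 \qquad\text{for every } p.
\]
With $S=0$, \eqref{eq:Q-decomp} gives $Q=J(G)$, where $G$ is trace-free. The condition is then bilinear in $\B$ (a symmetric matrix) and $G$ (a trace-free $3\times 3$ matrix), and I must show it forces $\B=0$ or $G=0$.

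I would argue by contradiction in a suitable basis. Assume $\B\ne0$. Using the unitary congruence $\B\mapsto U\B\,{}^tU$ (Takagi factorization), diagonalize $\B=\diag(\beta_1,\beta_2,\beta_3)$ with $\beta_1>0$. Writing out the components of the displayed tensor in terms of $\beta_i$ and the entries $G_{bj}$ gives a linear system in $G$. I expect that equating suitable components (for example $p=a$, or the pairs $(p,a)$ with $p\neq a$) yields $\beta_v G_{\cdot\cdot}=0$ relations forcing all entries of $G$ to vanish. Indeed, the operator $G\mapsto \Sym(\eps\,\B\, J(G))$ should be injective whenever $\B\neq0$.

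This finite linear-algebra verification is the main obstacle. The rank-one case $\B=\diag(1,0,0)$ is the tightest, and I would check it first by hand. If injectivity fails in some degenerate case, I would additionally use \eqref{eq:Q-transport} and $\nabla'Q=CQ$ at the point.

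Once $G=0$ is established, $Q=J(0)=0$.

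\emph{Density of $\{\B\ne0\}$.} By Lemma~\ref{lem:analytic}, $\B$ is analytic. If $\B$ vanished on an open set, it would vanish identically, and then $R=0$ by Lemma~\ref{lem:pure-trace}, a contradiction. So $\{\B\ne0\}$ is open and dense.

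\emph{Density of $\{S\ne0\}$.} If $S=0$ on an open set $U$, then $\nabla'S=0$ on $U$ as well. The pointwise claim then gives $Q=0$ on the dense open subset $U\cap\{\B\ne0\}$, hence $Q=0$ on $U$. Lemma~\ref{lem:continuation} with $k=1$ then gives $Q\equiv0$, and Proposition~\ref{prop:Qzero} gives $R=0$, a contradiction.

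\emph{Density of $O_2$.} By Proposition~\ref{prop:rank}, the maximum rank of $Q$ is $0$ or $2$. Rank $0$ would mean $Q\equiv0$, and then $R=0$ by Proposition~\ref{prop:Qzero}, which is excluded. So the maximum rank is $2$, and Lemma~\ref{lem:continuation} shows that the maximal-rank locus $O_2$ is open and dense.
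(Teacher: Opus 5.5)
Your density arguments for $\{\B\ne0\}$, $\{S\ne0\}$ and $O_2$ match the paper's and are fine. The gap is the pointwise claim, which is the real content of the lemma. You correctly reduce it to the injectivity, for $\B\ne0$, of $G\mapsto\Sym_{abc}\sum_{u,v}\eps_{puv}\B_{av}J(G)_{ubc}$. But you then only say you ``expect'' this and that it ``should'' hold, and the Takagi-basis component chase is never carried out.

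Your fallback also cannot fill the gap. Equation \eqref{eq:S-transport} is exactly the symmetrization of \eqref{eq:Q-transport}. At a single point, \eqref{eq:Q-transport} only computes $\nabla'Q$ from $Q$, so it adds no constraint on $Q$ beyond what you already used. As written, the first sentence of the lemma is unproved, and the density of $\{S\ne0\}$ falls with it.

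The missing idea is to pass to diagonal variables, which removes any case analysis. Put $q_a(y)=\sum_{b,c}Q_{abc}y_by_c$, $b(y)={}^ty\B y$ and $s(y)=S(y,y,y)$. The relevant tensors are symmetric in the contracted slots, so each vanishes exactly when its polynomial does:
\begin{itemize}
\item $S$ is fully symmetric;
\item for each fixed $p$, the symmetrized tensor in \eqref{eq:S-transport} is fully symmetric;
\item $Q$ is symmetric in its last two slots.
\end{itemize}
Then \eqref{eq:SG} gives $y\cdot q(y)=-\frac12 s(y)$. Also \eqref{eq:S-transport} gives $(\nabla_pS)(y,y,y)=\frac12\eta_p s(y)-2\bigl(q(y)\times\B y\bigr)_p$, where $\times$ is the complex bilinear cross product. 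Hence $S=\nabla'S=0$ means $y\cdot q(y)=0$ and $q(y)\times\B y=0$.

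If $\B\ne0$, then $b$ is a nonzero quadratic polynomial. Wherever $b(y)\ne0$, we have $\B y\ne0$, so $q(y)=\ell(y)\,\B y$ for a scalar $\ell(y)$. Then $0=y\cdot q(y)=\ell(y)b(y)$ forces $q(y)=0$. Since $\{b\ne0\}$ is dense in $\C^3$, $q\equiv0$, that is, $Q=0$. This proves the injectivity you conjectured, without diagonalizing $\B$ and without going through $G$.
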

\begin{proof}
At a fixed point put $q_a(y)=Q_{abc}y_by_c$, $b(y)={}^ty\B y$, and $s(y)=S(y,y,y)$. Equations~\eqref{eq:SG} and \eqref{eq:S-transport} give
\[
y\cdot q(y)=-\frac12s(y),\qquad
(\nabla_pS)(y,y,y)=\frac12\eta_ps(y)-2(q(y)\times\B y)_p.
\]
If $S=\nabla'S=0$, then $y\cdot q=0$ and $q\times\B y=0$. When $\B\ne0$, $b$ is a nonzero polynomial. On $\{b\ne0\}$, the second equation gives $q=\ell(y)\B y$, and the first gives $\ell(y)b(y)=0$. Thus $q=0$ there and, by polynomial uniqueness, $Q=0$.

If $\B$ vanishes on an open set, Lemmas~\ref{lem:analytic} and \ref{lem:pure-trace} give $R=0$. If $S$ vanishes on an open set, then so does $\nabla'S$. Either $\B$ vanishes there or the first assertion gives $Q=0$ on a smaller open set. Lemma~\ref{lem:continuation} and Proposition~\ref{prop:Qzero} again give $R=0$. These observations prove the two density assertions in the nonflat case. Propositions~\ref{prop:rank} and \ref{prop:Qzero}, together with Lemma~\ref{lem:continuation}, give the assertion about $O_2$.
\end{proof}

On $O_2$, define the plane bundle $P$ by $\operatorname{Im}Q=P\otimes\KM$. We emphasize that this is a bundle on the open dense subset $O_2$ instead of on the entire $M$.

\begin{lemma}\label{lem:contact}
The connection $D'=\nabla'-C$ preserves $P$. Let $\alpha$ be a nowhere-zero local section of $\Ann(P)$, and write $(D')^*\alpha=\lambda\otimes\alpha$. Put $a=\B(\alpha,\alpha)$ and $J_\alpha(X,Y)=\alpha(T_0(X,Y))$. Then
\begin{align}
\partial\lambda&=\frac32W,&\alpha(w)&=0,&
\nabla'_{\KM}a&=(2\lambda-\eta/2)\otimes a,& a\otimes W&=0,
\label{eq:contact-a}\\
\partial\alpha&=(\lambda-\eta)\wedge\alpha-J_\alpha,&
\alpha\wedge\partial\alpha&=-a.
\label{eq:contact-bracket}
\end{align}
Here three-forms are identified with sections of $\KM$. In particular, $W|_{\Lambda^2P}=0$, and $[P,P]\subset P$ at precisely the points where $a=0$. Moreover, for $X\in P$,
\begin{equation}\label{eq:plane-B}
\nabla_X\B=\frac12\eta(X)\B+w\odot X.
\end{equation}
\end{lemma}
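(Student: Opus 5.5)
Throughout I would work on $O_2$ in a local unitary frame, and I would take the identities in the order they depend on one another.

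\textbf{Step 1: $D'$ preserves $P$.} By \eqref{eq:Q-transport}, $\nabla'Q=CQ$, so $D'Q=0$ for the induced connection on $\mathrm{Hom}(\Sym^2\bar E,E\otimes\KM)$, with $D'$ acting on the output. The source $\Sym^2\bar E$ carries only its $(1,0)$ Chern part, which commutes with $Q$. Hence $D'$ maps sections of $\operatorname{Im}Q$ into $\operatorname{Im}Q$. It follows that $\Ann(P)$ is $(D')^*$-invariant, which defines $\lambda$.

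\textbf{Step 2: $\partial\lambda$ and $\alpha(w)$.} The curvature of $D'$ on $\Ann(P)$ is $\partial\lambda$. This $(2,0)$ curvature is $-{}^tJ$ restricted to the line, where $J$ is the operator in \eqref{eq:J-Q}. Since $J_{pq}Q=0$, the operator $J$ kills $P$. The trace computation $\tr J=\frac32 W$ from the proof of Proposition~\ref{prop:rank} then gives $\partial\lambda=\frac32W$, once I show that $J$ is pure trace on the quotient $E/P$.

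Rather than recompute $J$ from scratch, I would rerun the expansion of $J$ from that proof with $W$ kept. This gives
\[
(\nabla_bT)^a_{pq}=\tfrac12\eta_bT^a_{pq}+(\text{$U$-terms})+(\text{terms involving }W),
\]
valid modulo $P$ in the output slot. Pairing with $\alpha$ should produce both $\partial\lambda=\frac32W$ and $\alpha(w)=0$: the latter is the $\eps$-contraction of the $W$-trace term, and $w=\frac12\eps W$.

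\textbf{Step 3: the identity for $a$ and the plane identity \eqref{eq:plane-B}.} Contracting the same $\alpha$-component with $\eps$ and symmetrizing gives the $P$-directional part of $\nabla'\B$. This is \eqref{eq:plane-B}, with the $w\odot X$ term coming from $W$. Contracting fully with $\alpha\otimes\alpha$ then gives $\nabla'a$ in terms of $\lambda$ and $\eta$; the $\KM$ weight accounts for the $-\eta/2$. Finally, taking $\partial$ of the identity for $\nabla'a$ and using $\partial^2=0$ together with $\partial\lambda=\frac32W$ gives $2\cdot\frac32W\,a-\frac12 W a=0$, so $a\otimes W=0$.

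\textbf{Step 4: the form identities \eqref{eq:contact-bracket}.} For $\partial\alpha$, I would use the structure equation \eqref{eq:structure}. The connection part of $\partial\alpha$ is ${}^t(\nabla'\alpha)$, which equals $\lambda\wedge\alpha$ plus the $C^*$-correction coming from $D'=\nabla'-C$. Writing $C=T_0+\frac12\eta I$, this correction produces $-J_\alpha$ and a $-\frac12\eta\wedge\alpha$ term. The torsion term of the structure equation, split by \eqref{eq:torsion-split}, contributes the remaining $-\frac12\eta\wedge\alpha$. For $\alpha\wedge\partial\alpha$, the first term dies, and $\alpha\wedge J_\alpha$ reduces via $T_0=\eps\B$ to $\B(\alpha,\alpha)$ times the volume form. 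The consequences follow directly: $W|_{\Lambda^2P}=0$ from $\alpha(w)=0$, and the Frobenius criterion gives $[P,P]\subset P$ exactly where $\alpha\wedge\partial\alpha=0$, that is, where $a=0$.

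\textbf{Main obstacle.} The hardest step is Step 2, the careful bookkeeping of $J$ restricted to $\alpha$ with $W\neq0$, and its translation into $\alpha(w)=0$ and \eqref{eq:plane-B}. Signs and factors of $\frac12$ from the $\KM$ weights and from $\eps$-contractions are easy to lose there.
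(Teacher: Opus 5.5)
\textbf{There is a genuine gap at $\alpha(w)=0$, and the rest of your Step 3 depends on it.} Your Step 1 and the derivation of $\partial\lambda=\frac32W$ in Step 2 are fine. The ``pure trace on $E/P$'' point is automatic, since $E/P$ is a line and $J_{pq}$ kills $P$.

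Pairing the expansion of $J$ with $\alpha$ does not give $\alpha(w)=0$. Write $J_{pq}b=\alpha(b)v_{pq}$. The only normal information is the trace identity $\alpha(v_{pq})=\tr J_{pq}=\frac32W_{pq}$. After $\eps$-contraction and symmetrization, the full expansion gives
\[
\nabla_b\B=\tfrac12\eta(b)\B+w\odot b-\alpha(b)C_0,\qquad C_0(\alpha,\alpha)=\tfrac32\,\alpha(w).
\]
This is consistent with any value of $c=\alpha(w)$. For $b\in P$, the term $W_{pq}b$ has $\alpha$-component $W_{pq}\alpha(b)=0$, so it carries no information about $c$ either. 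Your Step 3 only controls the $P$-directions, so it cannot yield the homogeneous recurrence. Contracting the full equation with $\alpha\otimes\alpha$, and using that $\alpha\circ T_0(b,\cdot)$ annihilates $\B(\alpha,\cdot)$, gives instead
\[
\nabla'_{\KM}a=(2\lambda-\eta/2)\otimes a-\tfrac12c\,\alpha .
\]

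\textbf{The missing argument.} The vanishing of $c$ is an integrability fact, and it needs the identities of your Step 4 first, so you must reverse the order.
\begin{itemize}
\item Apply $\partial$ to the inhomogeneous recurrence; the pure curvature of $\KM$ is zero.
\item Wedge with $\alpha$, using $\partial(2\lambda-\eta/2)=\frac52W$, $\alpha\wedge W=c$ and $\alpha\wedge\partial\alpha=-a$. This gives $3ac=0$, so $c=0$ on $\{a\ne0\}$.
\item On the interior of $\{a=0\}$, the recurrence itself gives $c\,\alpha=0$.
\item These two open sets have dense union in $O_2$, so $c=0$ there by continuity.
\end{itemize}
Only after this is the recurrence homogeneous. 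Your $\partial^2$ argument then gives $a\otimes W=0$, and $W|_{\Lambda^2P}=0$ follows from $c=0$.

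\textbf{A minor bookkeeping slip in Step 4.} The $C$-correction contributes $-2J_\alpha$, not $-J_\alpha$, and the torsion term contributes $+J_\alpha-\frac12\eta\wedge\alpha$. Your final formula for $\partial\alpha$ is nevertheless correct.
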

\begin{proof}
The equation $\nabla'Q=CQ$ shows that $D'$ preserves its image; the scalar canonical connection does not change the plane $P$. The compatibility endomorphism $J$ in \eqref{eq:J-Q} annihilates $P$, so $J_{pq}=v_{pq}\otimes\alpha$ for uniquely determined vectors $v_{pq}$. Since $\tr J=3W/2$, the induced dual line connection gives $\partial\lambda=3W/2$.

We retain the unrestricted endomorphism equation before imposing a kernel condition. The expansion used in \eqref{eq:L-compat} gives, for every vector $b$,
\begin{equation}\label{eq:J-full}
J_{pq}b=-(\nabla_bT)(e_p,e_q)+\frac12\eta(b)T(e_p,e_q)
+W_{pq}b+\frac12\{U_q(b)e_p-U_p(b)e_q\},
\end{equation}
where $U_q(b)=(\nabla_q\eta)(b)+\eta(T(e_q,b))-\eta_q\eta(b)/2$. Let
\[
V_{la}=\frac12\sum_{p,q}\eps_{pql}v^a_{pq},\qquad C_0=\frac12(V+{}^tV),\qquad c=\alpha(w).
\]
The canonical factors are understood. The trace equation says $V\alpha=3w/2$, and hence $C_0(\alpha,\alpha)=3c/2$. Alternating-symbol contraction and symmetrization of \eqref{eq:J-full} give
\begin{equation}\label{eq:normal-B-remainder}
\nabla_b\B=\frac12\eta(b)\B+w\odot b-\alpha(b)C_0.
\end{equation}
The $U$-term disappears because it is skew in the two resulting indices. This proves \eqref{eq:plane-B} and gives
\[
(\nabla_b\B)(\alpha,\alpha)=\frac12\eta(b)a-\frac12c\alpha(b).
\]
The dual normal equation is
\begin{equation}\label{eq:alpha-dual}
\nabla_b\alpha=(\lambda(b)-\eta(b)/2)\alpha-\alpha\circ T_0(b,\cdot).
\end{equation}
The last covector annihilates $\B(\alpha,\cdot)$. Indeed, writing $\B=H\otimes\nu$ and $\beta=H(\alpha,\cdot)$ gives $\alpha(T_0(b,x))=\nu(b,x,\beta)$, which vanishes at $x=\beta$. Thus
\begin{equation}\label{eq:a-unrestricted}
\nabla'_{\KM}a=(2\lambda-\eta/2)\otimes a-\frac12c\alpha.
\end{equation}

For a covector, the exterior derivative includes its torsion term. Substituting \eqref{eq:alpha-dual} in
\[
\partial\alpha(X,Y)=(\nabla_X\alpha)(Y)-(\nabla_Y\alpha)(X)+\alpha(T(X,Y))
\]
gives the first formula of \eqref{eq:contact-bracket}. Alternating-symbol inversion gives $\alpha\wedge J_\alpha=a$ and $\alpha\wedge W=c$, proving the second formula.

On $\{a\ne0\}$ put $r=c/a$. Equation~\eqref{eq:a-unrestricted} becomes $\nabla'_{\KM}a=(2\lambda-\eta/2-r\alpha/2)\otimes a$. Its pure curvature is zero, so
\[
0=\frac52W-\frac12\partial(r\alpha).
\]
Wedge with $\alpha$ to get $0=5c/2+ra/2=3c$. On the interior of $\{a=0\}$, equation \eqref{eq:a-unrestricted} also gives $c=0$. These two open sets have dense union, so $c=0$ everywhere by continuity. Equation~\eqref{eq:a-unrestricted} now becomes homogeneous; taking its exterior derivative gives $a\otimes W=0$. This proves \eqref{eq:contact-a}, including at zeros of $a$.

Finally, $\alpha(w)=0$ is equivalent to $W|_{\Lambda^2P}=0$. For $X,Y\in P$, preservation by $D'$ gives
\[
[X,Y]\bmod P=(C_XY-C_YX-T(X,Y))\bmod P=T_0(X,Y)\bmod P.
\]
Its normal component vanishes exactly when $\alpha\wedge J_\alpha=a$ vanishes.
\end{proof}

We record a matrix estimate that will be used more than once.
\begin{lemma}\label{lem:block-bound}
At a point of $O_2$, choose a unitary frame with $P=\Span\{e_1,e_2\}$. Suppose $G$ has either form
\[
G=\begin{pmatrix}A&v\\0&-s\end{pmatrix}
\quad\hbox{or}\quad
G=\begin{pmatrix}A&0\\r&-s\end{pmatrix},\qquad
s=\tr A,\quad\det A=0.
\]
Then $r_G\leq3|S|^2$.
\end{lemma}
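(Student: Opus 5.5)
Since $G$ is block triangular in the chosen frame, whichever form it has,
\[
\tr(G^2)=\tr(A^2)+s^2 .
\]
Because $\det A=0$ and $\tr A=s$, the eigenvalues of the $2\times2$ block $A$ are $0$ and $s$. Hence $\tr(A^2)=s^2$, which gives $r_G=\re\tr(G^2)=2\re(s^2)\le 2|s|^2$. So it suffices to prove
\[
2|s|^2\le 3|S|^2 .
\]
The plan is to express the entries of $A$ through components of $S$ with third index $3$, using the fact that the image of $Q$ lies in $P\otimes\KM$.

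At a point of $O_2$ with $P=\Span\{e_1,e_2\}$, we have $Q_{3\ell j}=0$ for all $\ell,j$. Insert this into the decomposition \eqref{eq:Q-decomp}, $Q_{a\ell j}=-\frac12S_{a\ell j}+\frac16\sum_b(\eps_{a\ell b}G_{bj}+\eps_{ajb}G_{b\ell})$, with $a=3$ and $(\ell,j)$ running over $(1,1),(2,2),(1,2)$. A direct evaluation of the alternating symbols gives
\[
S_{311}=\tfrac23G_{21},\qquad S_{322}=-\tfrac23G_{12},\qquad S_{312}=\tfrac13(G_{22}-G_{11}).
\]
The remaining index choices give $S_{313},S_{323},S_{333}$, which are not needed.

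Now use $\det A=0$ once more to write
\[
s^2=(G_{11}+G_{22})^2=(G_{11}-G_{22})^2+4G_{12}G_{21}.
\]
This yields
\[
|s|^2\le 9|S_{312}|^2+9|S_{311}||S_{322}|\le 9|S_{312}|^2+\tfrac92\bigl(|S_{311}|^2+|S_{322}|^2\bigr).
\]
Recall that the norm of $S$ counts ordered components. The ordered multiplicities of $S_{312}$, $S_{311}$, $S_{322}$ are $6,3,3$, so
\[
3|S|^2\ge 18|S_{312}|^2+9|S_{311}|^2+9|S_{322}|^2\ge 2|s|^2 ,
\]
which is the claim.

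The only delicate point is the index bookkeeping in \eqref{eq:Q-decomp}: getting the signs and the $\frac16$ factors right for the three components $S_{3\ell j}$. I would check these by redoing the contraction $G_{rj}=2\sum\eps_{a\ell r}Q_{a\ell j}$ consistently. The constants then close exactly, with no loss beyond the two Cauchy--Schwarz steps.
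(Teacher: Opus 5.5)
Your proof is correct and follows the paper's argument: the paper also uses $\tr G^2=2s^2$, derives the same three components $S_{311}=\tfrac23G_{21}$, $S_{312}=\tfrac13(G_{22}-G_{11})$, $S_{322}=-\tfrac23G_{12}$ from $Q_3=0$ and \eqref{eq:Q-decomp}, and uses the same ordered multiplicities $3,6,3$. The only cosmetic difference is that the paper obtains $|s|^2\le 2|A-sI/2|^2$ from $|\tr H_0^2|\le|H_0|^2$ with $H_0=A-sI/2$, whereas you expand $s^2=(G_{11}-G_{22})^2+4G_{12}G_{21}$ directly, which gives the same inequality.
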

\begin{proof}
Put $H_0=A-sI/2$. Then $\tr G^2=2s^2$ and $\tr H_0^2=s^2/2$. Since $|\tr H_0^2|\leq|H_0|^2$, we get $r_G\leq4|H_0|^2$. Write $A=\left(\begin{smallmatrix}a&b\\d&e\end{smallmatrix}\right)$. The zero third row of $Q$ and \eqref{eq:Q-decomp} give
\[
S_{311}=\frac23d,\qquad S_{312}=\frac13(e-a),\qquad S_{322}=-\frac23b.
\]
Their multiplicities in the ordered norm are $3,6,3$. Therefore $|S|^2\geq\frac43(|b|^2+|d|^2)+\frac23|e-a|^2=\frac43|H_0|^2$, as required.
\end{proof}

\begin{proposition}\label{prop:zero-curl}
If $W=\partial\eta$ vanishes on a nonempty open subset, then $R=0$ on $M$.
\end{proposition}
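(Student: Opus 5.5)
We argue by contradiction: assume $R\not\equiv0$ while $W=\partial\eta$ vanishes on a nonempty open set $U$. First we globalize the hypothesis. By Lemma~\ref{lem:analytic}, $W$ is formed analytically from $g$ and its derivatives, so $W\equiv0$ on $M$. Then $w=0$ everywhere, and the problematic terms in the integral identities disappear. In particular $H_w=0$, and \eqref{eq:plane-B} reduces to $\nabla_X\B=\frac12\eta(X)\B$ for $X\in P$ on the open dense set $O_2$ of Lemma~\ref{lem:cubic-dense}.

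The goal is to show that, under $W=0$, the inequality $r_G\le 3|S|^2$ holds on $O_2$. Compactness then finishes the argument exactly as in the last paragraph of Proposition~\ref{prop:compact-estimate}. Since $S\not\equiv0$, that proposition applies, and \eqref{eq:compact-estimate} with $r_G\le3|S|^2$ on the dense set $O_2$ gives
\[
0\ge 6\int_M t|\Nt|^2+3\int_M t|S|^2+\frac83\int_M t^3+2\int_M\psi t^2 .
\]
Hence $\B\equiv0$, and Lemma~\ref{lem:pure-trace} gives $R=0$, a contradiction.

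To obtain $r_G\le3|S|^2$ pointwise, we use the structure of $G$ at a point of $O_2$. Fix a unitary frame with $P=\Span\{e_1,e_2\}$ and $\alpha=\varphi_3$. The third output row of $Q$ vanishes, so \eqref{eq:Q-decomp} (via $G_{rj}=2\eps_{a\ell r}Q_{a\ell j}$) forces $G_{3j}$ to come only from $Q_{1\ell j},Q_{2\ell j}$ with $\{a,\ell\}=\{1,2\}$. We then want to show that $W=0$ forces one of the two block shapes in Lemma~\ref{lem:block-bound}, namely the vanishing of the off-diagonal block $G_{3j}$ for $j=1,2$ (or its transpose), together with $\det A=0$. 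Our source for these constraints is the compatibility $J_{pq}=v_{pq}\otimes\alpha$ from Lemma~\ref{lem:contact}. With $W=0$ we have $\tr J=0$, and $\partial\lambda=0$, so locally we may rescale $\alpha$ to make $\lambda=0$; the line $\Ann(P)$ then carries a flat $D'$-structure. We will then differentiate the relation $\alpha\circ Q=0$ in barred directions, using $\nabla''Q$ expressed through $\nabla''$ of $\nabla'Q=CQ$ and the mixed formula for $\nabla_{\bar j}C$, i.e.\ the mixed torsion identity involving $\Phi$ and $\rho$. We expect this to yield linear relations on $G$ restricted to $P$ and to its complement, namely $G(e_3,P)=0$ and degeneracy of the $P$-block. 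The relation $a\otimes W=0$ gives no information here, so both $a\ne0$ and $a=0$ (where $P$ is integrable) must be handled. In the integrable case the quotient $E/P$ is a holomorphic line bundle, which should again give the triangular shape.

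The main obstacle is this pointwise algebraic step: extracting $\det A=0$ together with the triangular form from $W=0$. If the barred differentiation does not close, we would instead use the weighted identity \eqref{eq:weighted-energy} with the density $v$ of Lemma~\ref{lem:adjoint-density}. Since $W=0$ gives $\partial\bp\Omega=0$ and $\partial\eta=0$, locally $\eta=\partial h$, and one might try to show $\eta_f=0$ so that $\Omega$ becomes balanced-type. One would then hope to invoke the balanced-case argument, with $\Phi$ Hermitian and $\Psi=0$, in which $r_G=|H|^2$ is controlled by the $\frac16|H-\KB_0|^2$ and $|F_0+\frac43H+\frac13\KB_0|^2$ terms in $\mathcal E$. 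This gives the same contradiction.
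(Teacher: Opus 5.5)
\textbf{There is a genuine gap.} Your outer frame matches the paper. Analyticity (Lemma~\ref{lem:analytic}) globalizes $W=0$, so $w=0$, and \eqref{eq:plane-B} gives $\Nt_X=0$ for $X\in P$ on $O_2$. Once $r_G\le 3|S|^2$ holds on a dense set, \eqref{eq:compact-estimate} forces $\B\equiv0$, as you say.

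The gap is the step that carries all the content: forcing one of the block shapes of Lemma~\ref{lem:block-bound}. You leave it as an expectation, and the mechanism you propose does not produce it. Differentiating $\alpha\circ Q=0$ in barred directions only expresses the normal part of $\nabla''Q$ through $(\nabla''\alpha)\circ Q$. The law $\nabla'Q=CQ$ says nothing about $\nabla''Q$. Commuting a barred derivative through it gives a first-order equation for $\nabla''Q$ whose source involves the full curvature, not an algebraic constraint on $G$.

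The missing ingredient is the mixed equation \eqref{eq:N-mixed}, the only place where $G$ is tied to first-order torsion data. Since $\Nt$ vanishes on $P$, you have $\Nt=\alpha\otimes C_0$ with $C_0=\Nt_n$. The divergence identity $\sum_p\Nt_{app}=w_a/2=0$ puts $C_0$ in $\Sym^2P\otimes\KM$. Then \eqref{eq:N-mixed} becomes the polynomial identity $b(y)g_p(y)=c(y)\ell_p(y)$ for $p=1,2$. Here $b,c$ are the quadratics of $\B,C_0$, $g_p$ the rows of $G^*$, and $\ell_p$ the rows of $-2\nabla''\alpha$. Unique factorization gives the first block form with $\det A=0$ when $\B,C_0$ are independent, and $G^2=0$ when $C_0=0$.

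The remaining case, $C_0=\lambda_0\B$ with $\lambda_0\ne0$, is not pointwise algebraic, contrary to your framing. The identity then reduces to $g_p=\lambda_0\ell_p$, which constrains nothing pointwise, so you must differentiate once more on an open set. The paper gets such a set by intersecting $O_2$ with $\{t>0,\ r_G>6(|S|^2+|\Nt|^2)\}$. That set is nonempty and open by \eqref{eq:compact-estimate}, and the other two cases are already excluded there. (Alternatively, work on the interior of the dependence locus and conclude by density.) On it:
\begin{itemize}
\item $\Nt=\nu\otimes\B$ with $\nu=\lambda_0\alpha$, and \eqref{eq:N-mixed} yields $S=I_\B(V)$ and $(\nabla_{\bar j}\nu)_p+\nu_pV^j=-\frac12(G^*)_{pj}$.
\item Set $\Gamma_{p\bar j}=(G^*)_{pj}$; it is $D''$-closed, because $\Phi_{p\bar j}=\partial_{\bar j}\eta_p$ and $\rho_{p\bar j}=-\partial_p\partial_{\bar j}\log\det g$ in holomorphic coordinates.
\item Applying $D''$ then gives $\Gamma\wedge\beta=2\nu\otimes\bp\beta$, with $\beta$ dual to $V$.
\item Hence the $P$-rows of $G^*$ are proportional to $\beta$, so the upper-left block is degenerate.
\item Finally, $S_{3a3}=Q_{3a3}=0$ for $a=1,2$ gives $G_{13}=G_{23}=0$, which is the second block form.
\end{itemize}

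Your fallback does not substitute for this. First, $\partial\bp\Omega=0$ holds for every metric and gives no reason for $\eta_f=0$. Second, even with $\Psi=0$, the term $-\frac t6r_G$ in $\mathcal E$ is not dominated by the displayed squares, because the cross term $-\frac t3\tr(H\KB_0)$ has no sign. Third, conformal balancedness would not let you quote the balanced case, since vanishing holomorphic sectional curvature is not conformally invariant.
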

\begin{proof}
Suppose, to the contrary, that $R\not\equiv0$. Lemma~\ref{lem:analytic} gives $W=0$ globally. On $O_2$, choose a local unit normal $n$ to $P$ and put $\alpha=(\cdot,n)$. By \eqref{eq:plane-B}, $\Nt_X=0$ for $X\in P$, so $\Nt=\alpha\otimes C_0$, with $C_0=\Nt_n$. The divergence identity in Lemma~\ref{lem:differential} gives $(C_0)_{a3}=0$ in an adapted unitary frame. Hence $C_0\in\Sym^2P\otimes\KM$.

Set $b(y)=\B(y,y)$, $c(y)=C_0(y,y)$,
\[
g_p(y)=\sum_j(G^*)_{pj}y_j,\qquad
\ell_p(y)=-2\sum_j(\nabla_{\bar j}\alpha)_p y_j\quad (p=1,2).
\]
The complete mixed equation \eqref{eq:N-mixed}, including the derivative of $\alpha$, yields
\begin{equation}\label{eq:Wzero-polynomial}
b(y)g_p(y)=c(y)\ell_p(y),\qquad p=1,2.
\end{equation}
If $\B\ne0$ and $C_0=0$, then $g_1=g_2=0$, and $\tr G=0$ gives $G^2=0$. If $\B,C_0$ are independent, any nonzero linear $g_p$ divides $c$. Indeed, restrict \eqref{eq:Wzero-polynomial} to $g_p=0$. Either $c$ or $\ell_p$ vanishes on that hyperplane; the latter possibility makes $\ell_p$ proportional to $g_p$ and then $b$ proportional to $c$, a contradiction. As $c$ depends only on $y_1,y_2$, $g_p$ has no $y_3$ term. Also any two nonzero $g_1,g_2$ are proportional: otherwise $g_1\ell_2=g_2\ell_1$ forces $\ell_1$ proportional to $g_1$, with the same contradiction. Thus $G$ has the first block form of Lemma~\ref{lem:block-bound}, and $r_G\leq3|S|^2$.

By Lemma~\ref{lem:cubic-dense}, $S$ and $\B$ are nonzero somewhere. Proposition~\ref{prop:compact-estimate} applies, and implies that
\[
V_{\mathrm{ex}}=\{t>0,\ r_G>6(|S|^2+|\Nt|^2)\}
\]
is nonempty and open, since $\int_Mt^3>0$. Intersect with the dense open set $O_2$ and shrink to a coordinate neighborhood. The preceding estimate shows that $C_0=\lambda_0\B$ at every point of this neighborhood, with $\lambda_0\ne0$. The coefficient $\lambda_0=(C_0,\B)/t$ is smooth. Thus
\begin{equation}\label{eq:Wzero-recurrence}
\Nt=\nu\otimes\B,\qquad \nu=\lambda_0\alpha\ne0,\qquad \operatorname{Im}\B\subset P
\end{equation}
throughout this open set.

Insert \eqref{eq:Wzero-recurrence} in \eqref{eq:N-mixed}. It follows that $S=I_\B(V)$ for a unique smooth vector $V$. The uniqueness follows from injectivity of $I_\B$: on diagonal variables its value is the product $b(y)V(y)$, and a polynomial ring has no zero divisors. A smooth left inverse is $(I_\B^*I_\B)^{-1}I_\B^*$. The remaining mixed equation is
\begin{equation}\label{eq:Wzero-nu}
(\nabla_{\bar j}\nu)_p+\nu_pV^j=-\frac12(G^*)_{pj}.
\end{equation}
Regard $\Gamma_{p\bar j}=(G^*)_{pj}=\Phi_{p\bar j}-2\rho_{p\bar j}$ as an $E^*$-valued $(0,1)$-form. In holomorphic coordinates, $\Phi_{p\bar j}=\partial_{\bar j}\eta_p$ and $\rho_{p\bar j}=-\partial_p\partial_{\bar j}\log\det g$, so $D''\Gamma=0$. Put $\beta(\bar Y)=(V,Y)$. Applying $D''$ to \eqref{eq:Wzero-nu} gives
\[
\Gamma\wedge\beta=2\nu\otimes\bp\beta.
\]
Choose a point of the neighborhood where $S\ne0$, which is possible by Lemma~\ref{lem:cubic-dense}. Then $\beta\ne0$. The first two rows of $\Gamma$ are proportional to $\beta$, because $\nu$ annihilates $P$. Hence the upper-left block of $G$ has determinant zero. Moreover, \eqref{eq:Wzero-recurrence} and $S=I_\B(V)$ give $S_{313}=S_{323}=0$. Since $Q_{313}=Q_{323}=0$, formula \eqref{eq:Q-decomp} gives $G_{13}=G_{23}=0$. Thus the second block form in Lemma~\ref{lem:block-bound} holds, and $r_G\leq3|S|^2$, contradicting the definition of $V_{\mathrm{ex}}$.
\end{proof}

\begin{proposition}\label{prop:flag}
Suppose $R\not\equiv0$. On $O_2$, let $n$ be a local unit normal, $\alpha=(\cdot,n)$, and
\[
\tau_n(X)=(T_0(X,n),n),\qquad A_n(X)=\pi_P\nabla_Xn.
\]
The form $\tau_n$ is independent of the phase of $n$. One has
\begin{equation}\label{eq:flag-basic}
\B(\alpha,\alpha)=0,\quad \alpha(w)=0,\quad [P,P]\subset P,
\quad \nabla_XY\in P\quad(X,Y\in P).
\end{equation}
For $X\in P$,
\begin{equation}\label{eq:flag-transport}
\nabla_Xw=(\eta(X)+\tau_n(X))w,\qquad
\nabla_X\tau_n=\frac12\eta(X)\tau_n-\tau_n(A_n(X))\alpha.
\end{equation}
The set $O_*:=\{x\in O_2:W\ne0,\ \tau_n\ne0\}$ is open and dense. On $O_*$ the lines $\Span(w)\otimes\KM^{-1}$ and $\ker(\tau_n|_P)$ are preserved by the Chern derivatives in directions of $P$.
\end{proposition}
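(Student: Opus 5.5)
The plan is to start from Lemma~\ref{lem:contact}, whose identity $a\otimes W=0$ is the main input. By Proposition~\ref{prop:zero-curl}, if $R\not\equiv0$ then $W$ cannot vanish on any open set, and by Lemma~\ref{lem:analytic} the set $\{W\ne0\}$ is open and dense. Hence $a=\B(\alpha,\alpha)=0$ on a dense subset of $O_2$, and by continuity on all of $O_2$. Lemma~\ref{lem:contact} then gives $[P,P]\subset P$, and it already gives $\alpha(w)=0$.

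For $\nabla_XY\in P$ we use \eqref{eq:alpha-dual}. For $X,Y\in P$,
\[
\alpha(\nabla_XY)=-(\nabla_X\alpha)(Y)=\alpha(T_0(X,Y)),
\]
since $\alpha(Y)=0$ kills the $\lambda$-term. The form $J_\alpha$ restricted to $\Lambda^2P$ is exactly $\alpha\wedge J_\alpha=a=0$, so this vanishes. Phase independence of $\tau_n$ is immediate: if $n$ is replaced by $e^{i\theta}n$, the form $(T_0(X,n),n)$ acquires the factor $e^{i\theta}e^{-i\theta}=1$. Note also that $\tau_n=-\alpha\circ T_0(\cdot,n)$, which is the covector appearing in \eqref{eq:alpha-dual} evaluated at $b=n$ up to sign.

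For the transport formulas \eqref{eq:flag-transport}, we differentiate in a direction $X\in P$. For $w$, the plan is to use $\nabla'_{\KM}a=(2\lambda-\eta/2)a$ together with the trace equation $V\alpha=3w/2$ and \eqref{eq:normal-B-remainder}. The more direct route is to read off $\nabla_Xw$ from the pure torsion Bianchi identity: $w$ is the dual of $W=\partial\eta$, and $\partial W=0$ gives a transport law. The $P$-directional part of $\partial W$ is the component $W$ paired against $\Lambda^2P\wedge P$. Because $W|_{\Lambda^2P}=0$, we write $W=\alpha\wedge\xi$ and convert to the $\KM$-twisted vector $w$. The torsion term $T_0(X,n)$ then produces $\tau_n(X)w$, and the canonical factor produces $\eta(X)w$ (one half from $\KM$, one half from $\nabla\eta$ via $W$).

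For $\tau_n$, we differentiate $(T_0(X,n),n)$ along $P$ using the formula for $\nabla_bT$ extracted from \eqref{eq:J-full}. On $P$-directions $J$ takes values in $\alpha$-multiples, and $\nabla_Xn$ decomposes into its $n$-component (absorbed by phase) and $A_n(X)$. The term $-\tau_n(A_n(X))\alpha$ comes from the $P$-component of $\nabla_Xn$ inserted into the second slot and then paired. The $U$-terms in \eqref{eq:J-full} drop after pairing with $n$ because $\alpha(e_p)=0$ for $p\in P$. I expect this bookkeeping, with signs and the $\frac12\eta$ coefficients, to be the main obstacle. If the direct computation becomes unwieldy, I would first try it in an adapted unitary frame with $e_3=n$, using $\B_{33}=0$ and writing $\tau_n$ in terms of $\B_{3j}$ via \eqref{eq:torsion-split}.

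For density of $O_*$, note that $\{W\ne0\}$ is open and dense. Then $\tau_n|_P$ is expressed through $\B_{13},\B_{23}$ and $\eta$-terms. If $\tau_n$ vanished on an open set, the expression would give $\operatorname{Im}\B\subset P$ with $\Nt$ controlled as in Proposition~\ref{prop:zero-curl}. I would then rerun that argument, via Lemma~\ref{lem:block-bound} and Proposition~\ref{prop:compact-estimate}, to reach a contradiction; analyticity (Lemma~\ref{lem:analytic}) upgrades "not on an open set" to density.

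Finally, the preserved lines follow from the transport equations. The first equation of \eqref{eq:flag-transport} shows $\nabla_Xw\in\Span(w)$. For $Y\in\ker(\tau_n|_P)$, differentiating $\tau_n(Y)=0$ along $X\in P$ gives
\[
\tau_n(\nabla_XY)=-(\nabla_X\tau_n)(Y)=\tau_n(A_nX)\,\alpha(Y)=0,
\]
and $\nabla_XY\in P$ by \eqref{eq:flag-basic}. Hence both lines are preserved by Chern derivatives in directions of $P$.
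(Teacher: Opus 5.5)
Your plan is sound for $\B(\alpha,\alpha)=0$ (from $a\otimes W=0$ and the density of $\{W\ne0\}$), for $\alpha(w)=0$ and $[P,P]\subset P$, for your alternative derivation of $\nabla_XY\in P$ from \eqref{eq:alpha-dual} with $a=0$, for the phase invariance, and for the final line-preservation step. The genuine gap is the first formula in \eqref{eq:flag-transport}.

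\textbf{The transport of $w$.} The equation $\partial W=0$ is a single scalar identity, since $\Lambda^{3,0}$ has rank one on a threefold. (Also, $\Lambda^2P\wedge P=0$ because $P$ has rank two.) In terms of $w$, this identity is only the divergence $\sum_i(\nabla_iw)^i=\eta(w)$. For $X\in P$, $\nabla_Xw$ lies in $P\otimes\KM$ (differentiate $\alpha(w)=0$), so there are four unknown components, and one scalar equation cannot determine them. Your fallback through $\nabla'_{\KM}a$ is vacuous, because $a\equiv0$ on $O_2$.

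The missing ingredient is the integrability condition of the plane recurrence \eqref{eq:plane-B}. Apply the vanishing $(2,0)$-curvature of $\nabla'-\eta/2$ on $\Sym^2E\otimes\KM$ to \eqref{eq:plane-B} for $X,Y\in P$. Use $\nabla_XY\in P$, $W(X,Y)=0$, and the plane torsion formula $T_0(X,Y)=\tau_n(Y)X-\tau_n(X)Y$. This gives $H(X)\odot Y=H(Y)\odot X$ for $H(X)=\nabla_Xw-(\eta(X)+\tau_n(X))w$. Comparing the $11,22,13,23,12$ components with $X=e_1$, $Y=e_2$ forces $H(X)=\kappa X$ on $P$.

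Only at this point does $\partial W=0$ enter. Differentiating $\alpha(w)=0$ along $n$ via \eqref{eq:alpha-dual} gives $\alpha(\nabla_nw)=-\tau_n(w)$. Then the divergence equals $\eta(w)+2\kappa$, and so $\kappa=0$.

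\textbf{Density of $O_*$.} Your argument here is not viable as stated. The proof of Proposition~\ref{prop:zero-curl} starts from $W=0$, which is exactly what makes $\Nt_X=0$ for $X\in P$. Here $\Nt_X=w\odot X$ with $W\ne0$, so there is nothing to rerun. The needed observation is elementary. On an open set where $\tau_n=0$, one has $\alpha\circ T_0(b,\cdot)=\tau_n(b)\alpha-\alpha(b)\tau_n=0$. So \eqref{eq:alpha-dual} becomes the scalar recurrence $\nabla'\alpha=(\lambda-\eta/2)\otimes\alpha$. Its vanishing $(2,0)$-curvature gives $\partial(\lambda-\eta/2)=\tfrac32W-\tfrac12W=W=0$ there, contradicting Proposition~\ref{prop:zero-curl}.

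\textbf{Minor points in the $\tau_n$ computation.} The $U$-terms of \eqref{eq:J-full} do not drop after pairing with $n$. The term $-\tfrac12U_Y(X)$ survives, combines with the $\nabla\eta$ part of $T-T_0$, and reduces correctly only after using $W(X,Y)=0$. You must also dispose of $(T_0(Y,n),\nabla_{\bar X}n)$. This vanishes once the phase is fixed at the point, since $\nabla_X\alpha\in\Span(\alpha)$ for $X\in P$. Using \eqref{eq:plane-B} directly avoids both issues, because its extra term $w\odot X$ has output in $P$.
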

\begin{proof}
By Proposition~\ref{prop:zero-curl}, $\{W\ne0\}$ is open and dense. Equations~\eqref{eq:contact-a} give $\B(\alpha,\alpha)=0$ on $O_2$ by continuity, and Lemma~\ref{lem:contact} gives bracket closure. Write $\tau=\tau_n$. Splitting into plane and normal inputs gives
\begin{align}
\alpha(T_0(X,Y))&=\tau(X)\alpha(Y)-\tau(Y)\alpha(X),\label{eq:normal-torsion}\\
T_0(X,Y)&=\tau(Y)X-\tau(X)Y\quad(X,Y\in P).\label{eq:plane-torsion}
\end{align}
For example, with $e_3=n$, $\B_{33}=0$, $\tau(e_1)=-\B_{23}$, and $\tau(e_2)=\B_{13}$; alternating-symbol inversion proves both formulas. Since $D'$ preserves $P$, \eqref{eq:plane-torsion} gives $\nabla_XY\in P$.

By \eqref{eq:alpha-dual} and \eqref{eq:normal-torsion},
\begin{equation}\label{eq:normal-equation}
\nabla_X\alpha=k(X)\alpha+\alpha(X)\tau,\qquad
k=\lambda-\eta/2-\tau.
\end{equation}
If $\tau$ vanished on an open set, this would be a scalar recurrence with pure curvature $\partial(\lambda-\eta/2)=W$. It would give $W\alpha=0$ there, contrary to Proposition~\ref{prop:zero-curl}. Thus $O_*$ is open and dense.

Apply the pure curvature of $\nabla'-\eta/2$ on $\Sym^2E\otimes\KM$ to \eqref{eq:plane-B}, using $X,Y\in P$ and $W(X,Y)=0$. This gives
\[
0=(\nabla_Xw-\eta(X)w/2)\odot Y
-(\nabla_Yw-\eta(Y)w/2)\odot X+w\odot T(X,Y).
\]
By \eqref{eq:plane-torsion}, the tensor $H(X)=\nabla_Xw-(\eta(X)+\tau(X))w$ satisfies $H(X)\odot Y=H(Y)\odot X$. Comparing the $11,22,13,23,12$ components for $X=e_1,Y=e_2$ shows that $H(X)=\kappa X$ on $P$, for a canonical-valued scalar $\kappa$.

The exterior equation $\partial W=0$ gives the full divergence
\begin{equation}\label{eq:w-div}
\sum_i(\nabla_iw)^i=\eta(w).
\end{equation}
Indeed, the torsion terms in its evaluation on $(e_1,e_2,e_3)$ have coefficients $-\eta_1,-\eta_2,-\eta_3$. Differentiating $\alpha(w)=0$ along $n$ and using \eqref{eq:normal-equation} gives $\alpha(\nabla_nw)=-\tau(w)$. The plane part of the divergence is $\eta(w)+\tau(w)+2\kappa$. Equation~\eqref{eq:w-div} therefore gives $\kappa=0$, proving the first formula in \eqref{eq:flag-transport}.

To differentiate $\tau$, fix a point and choose the phase of $n$ so that its unitary line connection vanishes at that point. For $X\in P$, metric compatibility and \eqref{eq:normal-equation} give $\nabla_{\bar X}n=0$ and $\nabla_Xn=A_n(X)$ there. Subtracting the derivative of the free $Y$-slot, we obtain
\begin{align*}
(\nabla_X\tau)(Y)
={}&((\nabla_XT_0)(Y,n),n)
+(T_0(Y,A_n(X)),n)\\
&+(T_0(Y,n),\nabla_{\bar X}n).
\end{align*}
The last term is zero. Formula~\eqref{eq:plane-B} makes the first term $\eta(X)\tau(Y)/2$, since its extra output lies in $P$. Formula~\eqref{eq:normal-torsion} makes the middle term $-\tau(A_n(X))\alpha(Y)$. This proves the second formula in \eqref{eq:flag-transport}. The assertions about the two lines now follow by differentiating their defining equations. In particular,
\begin{equation}\label{eq:alignment-transport}
\nabla_X^{\KM}\bigl(\tau(w)\bigr)
=\bigl(3\eta(X)/2+\tau(X)\bigr)\tau(w),\qquad X\in P.
\end{equation}
This completes the proof of the proposition.
\end{proof}

It is useful to express the zero condition for $\tau(w)$ without choosing a normal. Put
\begin{equation}\label{eq:global-detectors}
\begin{split}
&A_Q=QQ^*,\qquad \sigma=\tfrac12\{(\tr A_Q)^2-\tr(A_Q^2)\},\\
&J_Q=A_Q^2-(\tr A_Q)A_Q+\sigma I,\\
& \theta_Q(X)=\tr\bigl(J_Q\circ T_0(X,\cdot)\bigr),\qquad a_Q=\theta_Q(w).
\end{split}
\end{equation}
These are global analytic tensors. At a rank-two point the eigenvalues of $A_Q$ are $\lambda_1,\lambda_2,0$, with $\lambda_1,\lambda_2>0$. Thus
\[
J_Q=\sigma\pi_{P^\perp},\qquad \sigma=\lambda_1\lambda_2>0,
\qquad\theta_Q=\sigma\tau_n,
\]
whereas $J_Q=\theta_Q=a_Q=0$ at rank at most one. Consequently either $a_Q$ is identically zero, or its nonzero set is open and dense. Only the polynomial tensors in \eqref{eq:global-detectors}, not a normalized normal or projector, are used at a rank drop.

\vspace{0.3cm}

\section{The case \texorpdfstring{$\det\B\not\equiv0$}{det B not identically zero}}\label{sec:full}
Throughout this section, we will assume that $R\not\equiv0$ and we want to derive a contradiction. Set $\delta=\det\B$. Since $\B\in\Sym^2\!E\otimes\KM$, its determinant is a section of $(\det E)^2\otimes\KM^3=\KM$. First let us record the mixed equation on the common regular set. Choose a unit normal $n$ to $P$, put $\alpha=(\cdot,n)$, and write $\tau=\tau_n$. Define
\[
C_0=\Nt_n-w\odot n.
\]
Equation~\eqref{eq:plane-B} and the divergence formula in Lemma~\ref{lem:differential} give
\begin{equation}\label{eq:universal-remainder}
\Nt_X=w\odot X+\alpha(X)C_0,\qquad C_0(\alpha,\cdot)=-\frac32w.
\end{equation}
In an adapted unitary frame $e_3=n$, this says $(C_0)_{a3}=-3w_a/2$, while $\B_{33}=w_3=(C_0)_{33}=0$. For formal covector variables $y$, put
\[
b(y)=\B(y,y),\quad c(y)=C_0(y,y),\quad j(y)=\Sym_2(\nabla''w)(y,y),
\]
\[
\ell_p(y)=\sum_j(\nabla_{\bar j}\alpha)_p y_j,\qquad
g_p(y)=\sum_j(G^*)_{pj}y_j.
\]
The mixed equation \eqref{eq:N-mixed} becomes
\begin{equation}\label{eq:universal-mixed}
y_pj(y)+c(y)\ell_p(y)=-\frac12b(y)g_p(y),\qquad p=1,2.
\end{equation}
When $\delta\ne0$, $b$ is an irreducible quadratic, since the symmetric matrix of a product of two linear forms has rank at most two. If $\B,C_0$ are independent, eliminating $j$ and using unique factorization gives
\begin{equation}\label{eq:mixed-syzygy}
y_2\ell_1-y_1\ell_2=\lambda_0b,\qquad
y_2g_1-y_1g_2=-2\lambda_0c
\end{equation}
for a scalar $\lambda_0$, including the possibility $\lambda_0=0$.

\begin{lemma}\label{lem:scalar-pattern}
There is no nonempty open subset of $O_2$ on which an adapted unitary frame satisfies
\begin{equation}\label{eq:scalar-pattern}
G=\begin{pmatrix}m_0&0&v_1\\0&m_0&v_2\\0&0&-2m_0\end{pmatrix},\quad
\pi_P\nabla_{e_p}n=\bar r_0e_p\ (p=1,2),\quad
\pi_P\nabla_nn=0,
\end{equation}
with $(\B_{13},\B_{23})\ne(0,0)$.
\end{lemma}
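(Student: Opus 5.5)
We argue by contradiction. Suppose that on a nonempty open set $U\subset O_2$ an adapted unitary frame $e_1,e_2,e_3=n$ realizes \eqref{eq:scalar-pattern} with $(\B_{13},\B_{23})\neq(0,0)$. By Proposition~\ref{prop:flag}, $\tau=\tau_n$ has components $\tau(e_1)=-\B_{23}$ and $\tau(e_2)=\B_{13}$, so $\tau|_P\neq0$ on $U$. The paper's summary of this case says the contradiction comes from two incompatible evaluations of one curvature component, so the plan is to compute a single Chern curvature entry, the $(1\bar 3\,3\bar 3)$-type component $(R(e_p,\bar e_j)n,n)$ with $p\in\{1,2\}$, in two independent ways.

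\emph{First evaluation: through the normal.} The hypotheses fix the second fundamental data of the normal line. Since $A_n(e_p)=\bar r_0 e_p$ and $A_n(n)=0$, we have $\nabla_X n=\bar r_0\,\pi_P X+\mu(X)n$ for a line-connection form $\mu$. Applying the commutator $[\nabla_{e_p},\nabla_{\bar e_j}]-\nabla_{[e_p,\bar e_j]}$ to $n$ gives a formula for $(R(e_p,\bar e_j)n,e_q)$ and $(R(e_p,\bar e_j)n,n)$. The result is expressed through $\bar\partial r_0$, $|r_0|^2$, $d\mu$, and the torsion pieces from \eqref{eq:normal-torsion}--\eqref{eq:plane-torsion}. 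The pure-type part is then constrained by \eqref{eq:normal-equation}, since $k=\lambda-\eta/2-\tau$ and $\partial\lambda=\frac32W$. So the $(1,1)$ part of $d\mu$ is tied to $\Phi$, and through $G$ to $m_0$ and $v$.

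\emph{Second evaluation: through $G$ and $Q$.} By \eqref{eq:Q-decomp}, the curvature components entering $Q$ and $G$ are determined by $S$ and $G$. The pattern $G=\mathrm{diag}(m_0,m_0,-2m_0)+(\text{column }v)$, combined with the vanishing third output row of $Q$ on $P^\perp$, already gives $S_{3\ell j}$ in terms of $m_0,v$ (as in the proof of Lemma~\ref{lem:block-bound}). Here the upper-left block is scalar, so $S_{311}=S_{322}=S_{312}=0$. Using Lemma~\ref{lem:ricci}, $\rho=\frac12(F-H)$ and $\rho^{(2)}=\frac12F+\frac32H$, together with the polarization \eqref{eq:polarization}, we read off the same component $R_{p\bar j3\bar3}$ as a linear expression in $m_0,v,\Phi$. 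Separately, we differentiate the scalar $\tau$ using \eqref{eq:flag-transport}. With $A_n=\bar r_0\,\mathrm{id}$ this gives $\nabla_X\tau=\frac12\eta(X)\tau-\bar r_0\tau(X)\alpha$. We then combine this with the mixed equation \eqref{eq:universal-mixed}/\eqref{eq:mixed-syzygy}, where $\ell_p$ is now explicit: $\ell_p(y)=-r_0$-multiples of $y_p$, plus a normal part. This yields $\lambda_0$ and a relation of the form $m_0\,\B_{p3}=(\text{expression in } r_0,\tau)$.

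Equating the two evaluations gives an algebraic identity among $m_0$, $r_0$, $|\tau|^2$ and $\B_{p3}$. The expected outcome is $c\,|\tau|^2\B_{p3}=0$ with a nonzero numerical constant $c$, which contradicts $(\B_{13},\B_{23})\neq0$. If a residual scalar such as $r_0$ or $m_0$ survives instead, the fallback is to differentiate the relation once more along $P$ using \eqref{eq:alignment-transport}. The same derivative can then be recomputed through \eqref{eq:Q-transport}, which gives a second relation, and the two together force $\tau=0$ on $U$. That contradicts the density of $O_*$ in Proposition~\ref{prop:flag}.

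The main obstacle is the first evaluation. It requires careful bookkeeping of the $(1,1)$ commutator on $n$, including the torsion term $T(e_p,\cdot)$ and the $\bar X$-derivatives of $n$, which are only indirectly controlled via metric compatibility. Getting the signs right so that the final constant $c$ is visibly nonzero is where I expect the real work to lie.
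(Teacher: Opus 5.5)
\textbf{There is a genuine gap.} Your outline never reaches a contradiction, and the input you rely on for the second evaluation carries no information here. Under \eqref{eq:scalar-pattern} one has $\ell_p(y)=r_0y_p$ and $g_p(y)=\bar m_0y_p$ for $p=1,2$. So \eqref{eq:mixed-syzygy} holds with $\lambda_0=0$, and \eqref{eq:universal-mixed} only determines $\Sym_2\nabla''w=-r_0c-\frac12\bar m_0b$. In fact the pattern is exactly what that analysis outputs in Lemma~\ref{lem:alignment}, so no relation of the form $m_0\B_{p3}=\cdots$ can come from it.

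The useful local relation comes instead from the $(1,0)$ transport $\nabla'Q=CQ$. Compare the frame derivative $(\nabla_pG)_{rj}=e_p(G_{rj})-\Gamma^k_{pr}G_{kj}+\Gamma^j_{pk}G_{rk}$ with \eqref{eq:G-transport} at $(p,r,j)=(1,3,1),(2,3,2),(1,2,1),(2,1,2)$. Because the upper-left block of $G$ is scalar, only $\Gamma^p_{p3}=\bar r_0$ survives on the frame side. That side gives $-3m_0\bar r_0$, $-3m_0\bar r_0$, $\bar r_0v_2$, $\bar r_0v_1$. Equation \eqref{eq:G-transport} gives $2D_0$, $-2D_0$, $-\B_{13}m_0$, $\B_{23}m_0$, where $D_0=\B_{13}Q_{221}-\B_{23}Q_{211}$. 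Hence $m_0\bar r_0=0$, $\bar r_0v_2=-\B_{13}m_0$ and $\bar r_0v_1=\B_{23}m_0$, so $m_0\ne0$ would force $\B_{13}=\B_{23}=0$. No curvature of $n$ and no derivative of $\tau$ is needed for this.

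This only yields $m_0=0$. With $m_0=0$, all of these first-order relations are compatible with $(\B_{13},\B_{23})\ne(0,0)$, for instance when $r_0=0$. So your expected pointwise identity $c|\tau|^2\B_{p3}=0$, and the fallback of forcing $\tau=0$, have no basis; nothing in your outline handles the nilpotent case.

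The missing idea is global. When $m_0=0$, $G$ satisfies $G^2=0$ on the open set, hence on $M$ by Lemma~\ref{lem:analytic}, so $r_G\equiv0$. Since $S\not\equiv0$ by Lemma~\ref{lem:cubic-dense}, the estimate \eqref{eq:compact-estimate} gives $\frac83\int_Mt^3\le0$. This contradicts $\B\ne0$ on the open set. Your proposal never uses compactness, and that is exactly what closes the argument.

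Two smaller points. The phrase in the introduction about two evaluations of one curvature component refers to the last step of Proposition~\ref{prop:full-flat}, not to this lemma. Also, \eqref{eq:mixed-syzygy} presupposes $\det\B\ne0$ and independence of $\B,C_0$, which are not hypotheses of the lemma.
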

\begin{proof}
Write $u_1=\B_{13}$ and $u_2=\B_{23}$. The complete derivative of the two-slot tensor $G$ is
\[
(\nabla_pG)_{rj}=e_p(G_{rj})-\Gamma^k_{pr}G_{kj}+\Gamma^j_{pk}G_{rk}.
\]
At $(p,r,j)=(1,3,1),(2,3,2),(1,2,1),(2,1,2)$, its values under \eqref{eq:scalar-pattern} are
\[
-3m_0\bar r_0,\quad-3m_0\bar r_0,\quad\bar r_0v_2,\quad\bar r_0v_1.
\]
On the other hand, differentiating $G_{rj}=2\eps_{alr}Q_{alj}$ by \eqref{eq:Q-transport} gives
\begin{equation}\label{eq:G-transport}
(\nabla_pG)_{rj}=\frac12\eta_pG_{rj}
+2\sum_{a,l,b,t}\eps_{alr}\eps_{pbt}\B_{at}Q_{blj}.
\end{equation}
Since $Q_3=0$, we have $Q_{231}=m_0/2$, $Q_{132}=-m_0/2$, $Q_{121}=Q_{211}$, and $Q_{122}=Q_{212}=Q_{221}$. Set $D_0=u_1Q_{221}-u_2Q_{211}$. The four values furnished by \eqref{eq:G-transport} are $2D_0,-2D_0,-u_1m_0,u_2m_0$. Comparing gives
\[
m_0\bar r_0=0,\qquad \bar r_0v_2=-u_1m_0,\qquad \bar r_0v_1=u_2m_0.
\]
If $m_0\ne0$, these equations force $r_0=u_1=u_2=0$, a contradiction. Thus $m_0=0$ and $G^2=0$ on the open set. Analytic continuation gives $G^2=0$ on $M$. Lemma~\ref{lem:cubic-dense} and \eqref{eq:compact-estimate} now contradict $\int_Mt^3>0$. This completes the proof of the lemma.
\end{proof}

\begin{lemma}\label{lem:alignment}
On $O_2$ one has $\tau(w)=0$. Moreover,
\begin{equation}\label{eq:det-plane}
\nabla_X^{\KM}\delta=\frac32\eta(X)\delta,\qquad X\in P.
\end{equation}
\end{lemma}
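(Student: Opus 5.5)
We prove the two claims in the order stated. First we show $\tau(w)=0$ on $O_2$ by contradiction, using the global detector $a_Q=\theta_Q(w)$ of \eqref{eq:global-detectors}. Since $\theta_Q=\sigma\tau_n$ with $\sigma>0$ on $O_2$, the statement $\tau(w)=0$ on $O_2$ is equivalent to $a_Q\equiv0$. So suppose $a_Q\not\equiv0$. Then its nonzero set is open and dense, and intersecting with $O_*$ and with $\{\delta\ne0\}$ (open dense by analyticity, since we are in the case $\det\B\not\equiv0$) gives a nonempty open set $U$ on which $\tau(w)\ne0$, $W\ne0$ and $\delta\ne0$.

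On $U$ we work in an adapted unitary frame with $e_3=n$ and the flag from Proposition~\ref{prop:flag}. We take $e_2$ spanning $\ker(\tau|_P)$; then $\tau(e_1)\ne0$. Since $w\in P$ (because $\alpha(w)=0$) and $\tau(w)\ne0$, the vector $w$ has a nonzero $e_1$ component. We then read off the mixed equation \eqref{eq:universal-mixed} together with the syzygy \eqref{eq:mixed-syzygy}. These apply because $b$ is irreducible when $\delta\ne0$. In the degenerate case where $\B$ and $C_0$ are dependent, we get $c=\mu b$ and the argument is simpler.

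From this we aim to constrain $G$. The relations \eqref{eq:mixed-syzygy} tie the first two rows of $G^*$ to the quadratic $c$, whose $e_3$-column is fixed by $C_0(\alpha,\cdot)=-\tfrac32 w$. Since $\B_{33}=0$ and $(\B_{13},\B_{23})=(\tau(e_2)\cdot(-1)\ldots)$ is controlled by $\tau$, we have $(\B_{13},\B_{23})\ne0$. The transport equations \eqref{eq:flag-transport} and \eqref{eq:alignment-transport} then say that the lines $\Span(w)$ and $\ker\tau|_P$ are $P$-parallel. Differentiating these in $P$-directions should force $\pi_P\nabla_X n$ to be a multiple of the identity on $P$ and $\pi_P\nabla_n n=0$. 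Comparing with \eqref{eq:G-transport} should in turn force the upper-left block of $G$ to be scalar with zero lower-left row, which is exactly the pattern \eqref{eq:scalar-pattern}. Lemma~\ref{lem:scalar-pattern} then gives the contradiction. If this matching instead yields one of the block forms of Lemma~\ref{lem:block-bound}, we fall back on the argument of Proposition~\ref{prop:zero-curl}: $r_G\le3|S|^2$ on an open set meeting $V_{\mathrm{ex}}$, which contradicts \eqref{eq:compact-estimate}. The main obstacle is this step, namely extracting enough linear-algebra information from \eqref{eq:mixed-syzygy} and the transport equations to land precisely in \eqref{eq:scalar-pattern}. We expect to need a case split on $\lambda_0=0$ versus $\lambda_0\ne0$.

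Second, once $\tau(w)=0$ is known, we prove \eqref{eq:det-plane}. Take $X\in P$ and use \eqref{eq:plane-B}:
\[
\nabla_X\B=\tfrac12\eta(X)\B+w\odot X.
\]
Jacobi's formula gives
\[
\nabla_X\delta=\tfrac32\eta(X)\delta+\tr\bigl(\adj(\B)\,(w\odot X)\bigr).
\]
The factor $3/2$ arises because $\delta$ is cubic in $\B$; the canonical weights are already absorbed in \eqref{eq:plane-B}. The correction term equals, up to a constant, the mixed polarization $\B^{\adj}(w,X)$. In the adapted frame we have $\B_{33}=0$, $\B_{13}=-\tau(e_2)$ and $\B_{23}=\tau(e_1)$, and $w,X\in P$. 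A direct cofactor computation should show that $\adj(\B)$ restricted to $P\times P$ is the rank-one form $\tau\otimes\tau$ up to sign, since its entries are $-\B_{i3}\B_{j3}$. Hence the correction is proportional to $\tau(w)\tau(X)$, which vanishes by the first part. This proves \eqref{eq:det-plane} on $O_2$.
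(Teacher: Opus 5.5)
\textbf{There is a genuine gap, at exactly the step you call the main obstacle.}

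\emph{Where the pattern really comes from.} The pattern \eqref{eq:scalar-pattern} does not come from differentiating the $P$-parallel lines $\Span(w)$ and $\ker(\tau|_P)$ in $P$-directions. Those transport identities are already established and impose no condition making $\pi_P\nabla_Xn$ scalar. The pattern comes from the barred (mixed) equation. If $\lambda_0=0$ in \eqref{eq:mixed-syzygy}, then $y_2\ell_1=y_1\ell_2$ and $y_2g_1=y_1g_2$. Hence $\ell_p=r_0y_p$ and $g_p=\mu_0y_p$ for $p=1,2$. Metric compatibility and $\tr G=0$ then turn this directly into \eqref{eq:scalar-pattern}, and Lemma~\ref{lem:scalar-pattern} applies.

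\emph{The missing idea: proving $\lambda_0=0$.} Everything hinges on $\lambda_0=0$, and this is the only place the hypothesis $\tau(w)\ne0$ can enter. Your outline never uses it beyond locating a nonzero component of $w$. Your fallback to Lemma~\ref{lem:block-bound} when $\lambda_0\ne0$ has no supporting mechanism. The missing step is to compute $(H_w)_{33}$ in two ways.
\begin{itemize}
\item Put $u_i=\B_{i3}$. The $y_2y_3$ and $y_1y_3$ coefficients of \eqref{eq:mixed-syzygy} give $\ell_{13}=2\lambda_0u_2$, $\ell_{23}=-2\lambda_0u_1$, $g_{13}=6\lambda_0w_2$ and $g_{23}=-6\lambda_0w_1$.
\item Differentiating $\alpha(w)=0$ along $\bar n$ gives $(H_w)_{33}=-\ell_{13}w_1-\ell_{23}w_2=2\lambda_0\tau(w)$.
\item The $y_1y_3^2$ coefficient of \eqref{eq:universal-mixed} for $p=1$, with $(C_0)_{13}=-\tfrac32w_1$, gives $(H_w)_{33}=-6\lambda_0\tau(w)$.
\end{itemize}
Comparing the two values, $\lambda_0=0$ wherever $\tau(w)\ne0$.

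\emph{The dependent case.} This case is not ``simpler''; it is empty. If $C_0=\mu\B$, the third column of \eqref{eq:universal-remainder} forces $w=-\tfrac{2\mu}{3}\B(\alpha,\cdot)$. That vector has components $(u_1,u_2,0)$ up to a factor, so it lies in $\ker\tau$.

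\textbf{The case $\det\B\equiv0$ is not covered.} You restrict to $\det\B\not\equiv0$ without justification. The lemma carries no such hypothesis, and it is invoked in Section~\ref{sec:singular}, where $\delta\equiv0$. Your own Jacobi computation closes this case if you apply it \emph{before} alignment. For $X\in P$ it gives $\nabla^{\KM}_X\delta=\tfrac32\eta(X)\delta-\tau(X)\tau(w)$. So $\delta\equiv0$ forces $\tau(X)\tau(w)=0$. Since $\tau(n)=0$ and $\tau\ne0$ on the dense set $O_*$, this gives $\tau(w)=0$ there, and on $O_2$ by continuity.

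\textbf{The determinant formula.} Your derivation of \eqref{eq:det-plane} otherwise matches the paper's, apart from a cofactor slip. One has $\B_{13}=\tau(e_2)$ and $\B_{23}=-\tau(e_1)$. The plane block of $\adj\B$ has entries $-\B_{23}^2$, $\B_{13}\B_{23}$, $-\B_{13}^2$, which is $-\tau\otimes\tau$. It is not $-\B_{i3}\B_{j3}$; taken literally, that form would not be killed by $\tau(w)=0$.
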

\begin{proof}
Suppose first that $\delta\ne0$ and $a_{\mathrm{rel}}:=\tau(w)\ne0$ on an open set. With $u_i=\B_{i3}$, we have $\tau=(-u_2,u_1,0)$ and $a_{\mathrm{rel}}=u_1w_2-u_2w_1$. The third-column identity in \eqref{eq:universal-remainder} prevents $C_0$ from being proportional to $\B$. Thus \eqref{eq:mixed-syzygy} applies. Its $y_2y_3$ and $y_1y_3$ coefficients give
\[
\ell_{13}=2\lambda_0u_2,\quad\ell_{23}=-2\lambda_0u_1,
\quad g_{13}=6\lambda_0w_2,\quad g_{23}=-6\lambda_0w_1.
\]
Differentiate $\alpha(w)=0$ in the barred normal direction. If $H_w=\Sym_2\nabla''w$, this gives $(H_w)_{33}=-\ell_{13}w_1-\ell_{23}w_2=2\lambda_0a_{\mathrm{rel}}$. The $y_1y_3^2$ coefficient of \eqref{eq:universal-mixed} for $p=1$ instead gives
\[
(H_w)_{33}+2(C_0)_{13}\ell_{13}=-\B_{13}g_{13},
\]
so $(H_w)_{33}=-6\lambda_0a_{\mathrm{rel}}$. Hence $\lambda_0=0$. The remaining coefficients of \eqref{eq:mixed-syzygy} give $\ell_{pj}=r_0\delta_{pj}$ and $g_{pj}=\mu_0\delta_{pj}$ for $p=1,2$. Taking adjoints and using $\tr G=0$ and metric compatibility gives \eqref{eq:scalar-pattern}, which is impossible by Lemma~\ref{lem:scalar-pattern}.

Before imposing alignment, differentiating the determinant by its polynomial adjugate gives
\begin{equation}\label{eq:det-before-alignment}
\nabla_X^{\KM}\delta=\frac32\eta(X)\delta-\tau(X)\tau(w),\qquad X\in P.
\end{equation}
Indeed, in a frame with $\B_{33}=0$, the leading plane block of $\adj\B$ is $-\tau\otimes\tau$. Its contraction with the $w\odot X$ term in \eqref{eq:plane-B} is $-\tau(X)\tau(w)$. This proof uses no inverse of $\B$.

If $\delta\equiv0$, equation \eqref{eq:det-before-alignment} and $\tau\ne0$ on $O_*$ give $\tau(w)=0$ there, and then on $O_2$ by continuity. If $\delta\not\equiv0$, its nonzero set is open and dense by Lemma~\ref{lem:analytic}. Any open nonalignment set would meet it, contrary to the first part of the proof. Thus alignment holds in both cases, and \eqref{eq:det-plane} follows.
\end{proof}

Now assume $\delta\not\equiv0$ and put
\[
U=O_*\cap\{\delta\ne0\}.
\]
This is a nonempty open dense set. All inverses and normalized fields in the rest of this section are restricted to $U$.

\begin{proposition}\label{prop:normalization}
There is an intrinsic smooth frame $F_1,F_2,F_3$ on $U$, not necessarily unitary or holomorphic, with dual coframe $\varphi_1,\varphi_2,\varphi_3$ and $\nu=\varphi_1\wedge\varphi_2\wedge\varphi_3$, such that
\begin{equation}\label{eq:full-B-normal}
\delta=-4\nu,\quad w=-4F_1\otimes\nu,\quad
\B=\begin{pmatrix}u&2z-3&-2\\2z-3&1&0\\-2&0&0\end{pmatrix}\otimes\nu.
\end{equation}
Writing $\eta_i=\eta(F_i)$, the column-coefficient connection matrices $\Gamma_i=\Omega(F_i)$ are
\begin{equation}\label{eq:full-connection}
\begin{aligned}
\Gamma_1&=-\eta_1I/2,\qquad
\Gamma_2=-\eta_2I/2+\diag(-2,0,2),\\
\Gamma_3&=-\eta_3I/2+\begin{pmatrix}7-4z&0&2a\\1&5-2z&2b\\0&2&4z-5\end{pmatrix}.
\end{aligned}
\end{equation}
Here $z,u,a,b$ are smooth complex functions. Furthermore,
\begin{equation}\label{eq:full-brackets}
\begin{aligned}
{}[F_1,F_2]&=4F_1,&[F_1,F_3]&=(6z-10)F_1,\\
[F_2,F_3]&=-uF_1-2F_2+2F_3,\\
F_1z&=0,&F_2z&=2z-4,&F_1u&=-4,&F_2u&=4u.
\end{aligned}
\end{equation}
Let $H_{pq}=(F_q,F_p)$, let $H_0$ be its leading $2\times2$ block, and set
\begin{equation}\label{eq:Gram-def}
\begin{gathered}
\binom rs=H_0^{-1}\binom{H_{13}}{H_{23}},\qquad N_0=F_3-rF_1-sF_2,\qquad d=(N_0,N_0)>0,\\
D_0=2z-3-2s,\qquad U_0=u-4r,\qquad
J_0=\begin{pmatrix}D_0&-U_0\\1&-D_0\end{pmatrix},\\
A_0=\begin{pmatrix}-F_1r&4r-F_2r\\-F_1s&2s-F_2s\end{pmatrix}.
\end{gathered}
\end{equation}
If $v$ is the column of $\pi_P\nabla_{N_0}N_0$ in $F_1,F_2$, and $m=(H_0v)_2/(4d)$, then
\begin{equation}\label{eq:Gram-motion}
(A_0)_{\rm tf}=mJ_0^{*_{H_0}},\qquad H_0v=4dm\binom01,
\end{equation}
where $X^\dagger=\overline X^{\,t}$, $X^{*_{H_0}}=H_0^{-1}X^\dagger H_0$, and ${\rm tf}$ denotes the trace-free part. The set $\{m\ne0\}$ is open and dense in $U$.
\end{proposition}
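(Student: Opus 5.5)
We build the frame from the intrinsic data available on $U$: the plane $P$, the normal line, the vector $w\in P\otimes\KM$ (nonzero on $O_*$), and the canonical section $\delta$. Everything else is then read off from \eqref{eq:plane-B}, \eqref{eq:flag-transport}, \eqref{eq:det-plane} and \eqref{eq:universal-remainder}.

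\emph{Step 1: choosing the frame.} Using $\delta\ne0$, we define $\nu$ by $\delta=-4\nu$; this trivializes $\KM$ intrinsically. Set $F_1=-\tfrac14 w\otimes\nu^{-1}$, so that $F_1\in P$, and $w=-4F_1\otimes\nu$ holds by construction. By Lemma~\ref{lem:alignment}, $\tau(w)=0$, so $F_1$ spans $\ker(\tau|_P)$. Next we fix $F_3$ modulo $P$ by requiring $\B(\alpha',\cdot)$ to take a normalized form. Here $\alpha'$ is the dual covector annihilating $P$, normalized against $\nu$. Since $\B(\alpha,\alpha)=0$, the entry $\B_{33}$ vanishes automatically. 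We then choose $F_2\in P$ so that $\B_{22}=1$ and $\B_{23}=0$; this is possible because $\tau\ne0$ on $O_*$ and $\tau(F_1)=0$. With $\B_{23}=0$ and $\B_{33}=0$, the condition $\det\B=-4$ forces $\B_{13}^2=4$, and we normalize $\B_{13}=-2$. The residual freedom $F_3\mapsto F_3+cF_1+c'F_2$ is used to make $\B_{12}$ take the form $2z-3$, where $z$ is a convenient recentering. The remaining entry $u=\B_{11}$ stays free. This gives \eqref{eq:full-B-normal}.

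\emph{Step 2: computing connection coefficients.} With the frame intrinsic, the connection forms are computed by differentiating the normalizing identities. The inputs are as follows.
\begin{itemize}
\item Equation \eqref{eq:flag-transport} for $\nabla_Xw$ along $P$ gives $\Gamma_1,\Gamma_2$ acting on $F_1$.
\item Equation \eqref{eq:plane-B} for $\nabla_X\B$ along $P$ gives the rest of the $P$-directional data.
\item Equation \eqref{eq:det-plane} fixes the scalar trace $-\eta_i/2$ for $i=1,2$.
\item Preservation of $P$ by $\nabla_X$ for $X\in P$ (Proposition~\ref{prop:flag}) gives the zero lower-left entries.
\end{itemize}
For the normal direction $F_3$, we use \eqref{eq:universal-remainder}, namely $\Nt_n=w\odot n+C_0$ with $C_0(\alpha,\cdot)=-\tfrac32 w$, together with $\nabla'_{\KM}\delta$, obtained from the adjugate of $\B$. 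The unknown entries of $C_0$ and of $\nabla_{F_3}F_3$ are named $a,b$, while the others are determined.

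Torsion is then given by \eqref{eq:torsion-split} in terms of $\B$ and $\eta$. The brackets follow from $[F_i,F_j]=\nabla_{F_i}F_j-\nabla_{F_j}F_i-T(F_i,F_j)$, and the $\eta$-terms cancel against the $-\eta I/2$ parts. The derivatives $F_iz$ and $F_iu$ for $i=1,2$ come from the $P$-directional equation \eqref{eq:plane-B} applied to the entries $\B_{11},\B_{12}$.

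\emph{Step 3: metric compatibility.} The Chern connection is compatible with $H_{pq}=(F_q,F_p)$, so $dH=\Gamma H+H\Gamma^\dagger$ in the appropriate type splitting. We restrict this identity to $P$-directions and pass to the normal $N_0=F_3-rF_1-sF_2$. This yields the derivatives of $r,s$, packaged as $A_0$, in terms of $\pi_P\nabla_{N_0}N_0=v$. The trace-free part has the stated $J_0$-structure, where $J_0$ arises from the $P$-block of $\Gamma_3$ after subtracting $r\Gamma_1+s\Gamma_2$. Comparing the second component then gives $m$. This step is the main obstacle: it is a long bookkeeping computation where the conjugate-linear structure must be tracked carefully, and we would organize it as a matrix identity in $H_0$.

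\emph{Step 4: density of $\{m\ne0\}$.} By analyticity (Lemma~\ref{lem:analytic}), it suffices to exclude $m=0$ on an open set. If $m=0$ there, then $\pi_P\nabla_{N_0}N_0=0$ and $(A_0)_{\rm tf}=0$, so the second fundamental data of $P$ are pure trace. We would convert this, via the mixed equation \eqref{eq:universal-mixed} and \eqref{eq:mixed-syzygy}, into the scalar pattern \eqref{eq:scalar-pattern}, which is excluded by Lemma~\ref{lem:scalar-pattern}.
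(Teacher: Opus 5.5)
Your $F_1=w/\delta$ is correct, but the rest of Step 1 does not determine a frame, and Steps 2 and 3 inherit this gap.

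\textbf{The frame is not determined.} The condition $\B_{13}=-2$ says $\B(\varphi_3,\cdot)=w/2$, which is possible by alignment. It fixes $\varphi_3$, hence $F_3\bmod P$, and the volume condition then fixes $F_2\bmod F_1$. After that, $\B_{33}=\B_{23}=0$ and $\B_{22}=1$ hold automatically. What remains is $F_2\mapsto F_2+\mu F_1$ and $F_3\mapsto F_3+cF_1+c'F_2$ with three arbitrary functions. The $F_3$ change shifts $(\B_{11},\B_{12})$ by $(4c,2c')$, and the $F_2$ change shifts $\B_{12}$ by $-\mu$. So \eqref{eq:full-B-normal} imposes no condition on these choices: writing $\B_{12}=2z-3$ only names an arbitrary function. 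Your frame is therefore not intrinsic, and your $z,u$ cannot satisfy the derivative identities in \eqref{eq:full-brackets}.

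\textbf{Consequence for Step 2.} For $X\in P$, your inputs are the recurrence of $w$, the constant entries of \eqref{eq:plane-B}, \eqref{eq:det-plane}, and $P$-preservation. These fix only the diagonal and strictly lower-triangular entries of $\Omega(X)$. The entries $\Omega_{12},\Omega_{13},\Omega_{23}$ stay free, absorbing derivatives of $\mu,c,c'$, whereas \eqref{eq:full-connection} asserts they vanish.

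\textbf{The missing idea.} $F_2$ and $F_3$ must come from differential invariants. Write $w=q\B(\alpha,\cdot)$ and $\beta=\B^{-1}w=q\alpha$. The paper then proceeds as follows.
\begin{enumerate}
\item It reads a scalar $\kappa$ off the normal recurrence \eqref{eq:beta-recurrence}.
\item It forms the intrinsic plane tensor $D=C_0/q+\tfrac32\B$ and sets $z=\tr(\B^{-1}D)$.
\item Using the pure compatibility of \eqref{eq:full-B-recurrence} and then of \eqref{eq:D-plane}, it proves that $V=(\kappa+\tfrac12)w+2\B(\tau,\cdot)$ satisfies $\nabla_XV=\eta(X)V$ on $P$.
\item It shows $Y=V+q^{-1}(\nabla_nV-\eta(n)V)$ satisfies $\nabla_XY=(\eta-\tau)(X)Y$ on $P$, and that $w\wedge V\wedge Y=-4\delta^2$.
\end{enumerate}
With $F_1=w/\delta$, $F_2=V/\delta$, $F_3=Y/\delta$, the entry $\B_{12}=2z-3$ is forced by the $F_1$-coefficient of $V$. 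The upper entries of $\Gamma_1,\Gamma_2$ vanish because of these recurrences. The entries $1,2$ and the $z$-dependence of $\Gamma_3$ follow from \eqref{eq:full-triple} and from differentiating $w\wedge V\wedge Y$ and $\delta$. Only $a,b$ remain free.

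\textbf{Step 3 lacks its essential input.} Metric compatibility brings in the $(0,1)$ part of the connection, which \eqref{eq:full-connection} does not give. So it cannot relate $A_0$ to the acceleration $v$; here $A_0$ is simply the matrix of $X\mapsto\pi_P\nabla_XN_0$ on $P$, read off from \eqref{eq:full-connection}. The relation \eqref{eq:Gram-motion} is curvature information.
\begin{itemize}
\item On the locus where $\B$ and $C_0$ are independent, write \eqref{eq:universal-mixed} in the frame $(F_1,F_2,N_0)$, raising the barred slot with the actual metric. Unique factorization gives $y_2L_1-y_1L_2=\lambda_1\B(y,y)$.
\item Only then does metric compatibility convert the barred derivatives of $\alpha$ into $\pi_P\nabla_Xn$. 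This yields \eqref{eq:Gram-motion} with $m=\sqrt d\,\bar\lambda_1$, which extends to $U$ by continuity.
\item You also need this independence locus to be dense. The paper shows that $C_0=c_0\B$ on an open set would force $D=0$, hence $\nabla'_{\KM}\delta=(3\eta/2-7\beta/2)\delta$ and $0=\partial(3\eta/2-7\beta/2)=-2W$, a contradiction.
\end{itemize}

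\textbf{Step 4.} Given the above, your Step 4 is the paper's argument. In the unitary frame $\pi_P\nabla_nn=-2\overline{\lambda_0b_0}e_2$, and $\lambda_0=0$ on an open set yields \eqref{eq:scalar-pattern}. No analyticity is needed there: density of $\{m\ne0\}$ just means $\{m=0\}$ has empty interior, and $m$ is not a global analytic tensor anyway.
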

\begin{proof}
\step{The inverse form and the normal remainder.}
Let $\beta=\B^{-1}w$. By alignment, $w=q\B(\alpha,\cdot)$ with $q\ne0$, and $\beta=q\alpha$. Equation~\eqref{eq:normal-equation}, with its scalar form denoted here by $k_0$, gives
\[
\partial\alpha=(k_0-\eta/2)\wedge\alpha,
\qquad Xq=(\eta(X)/2+\tau(X)-k_0(X))q\quad(X\in P).
\]
The torsion term is included in the first equality. It follows that
\begin{equation}\label{eq:beta-full}
\partial\beta=q\tau\wedge\alpha=W,\qquad \beta(w)=0.
\end{equation}
For example, $w=q(\B_{13},\B_{23},0)$ gives $W_{13}=-q\B_{23}$ and $W_{23}=q\B_{13}$.

Set $D=C_0/q+3\B/2$. Under a unit phase change of $n$, both $C_0$ and $q$ acquire the same phase, so $D$ is intrinsic. Equation~\eqref{eq:universal-remainder} gives $D(\beta,\cdot)=0$; thus $D$ is a symmetric plane tensor. The full derivative becomes
\begin{equation}\label{eq:full-B-recurrence}
\nabla_X\B=(\eta(X)/2-3\beta(X)/2)\B+w\odot X+\beta(X)D.
\end{equation}
The normal equation for $\beta$ has the form
\begin{equation}\label{eq:beta-recurrence}
\nabla_X\beta=(\eta(X)/2+\tau(X)+\kappa\beta(X))\beta+\beta(X)\tau
\end{equation}
for a unique smooth scalar $\kappa$, since its remaining scalar one-form vanishes on $P=\ker\beta$.

The independence locus of $\B,C_0$ is open and dense in $U$. Otherwise $C_0=c_0\B$ on an open set, and its third column gives $c_0=-3q/2$, so $D=0$. The determinant equation would then be
\[
\nabla'_{\KM}\delta=(3\eta/2-7\beta/2)\delta.
\]
Its pure curvature gives $0=\partial(3\eta/2-7\beta/2)=-2W$, a contradiction.

\step{Two canonical-valued plane vectors.}
Put
\[
v_0=\B(\tau,\cdot),\qquad V=(\kappa+1/2)w+2v_0.
\]
In a unitary frame with $e_1$ along $w$ and $e_3=n$, alignment gives
\begin{equation}\label{eq:full-unitary-B}
\B=\begin{pmatrix}A&d_0&b_0\\d_0&e_0&0\\b_0&0&0\end{pmatrix}\otimes\nu_0,
\quad w=s_0e_1\otimes\nu_0,\quad
\tau=b_0\varphi_2,\quad\beta=q\varphi_3,\quad q=s_0/b_0,
\end{equation}
where $s_0b_0e_0\ne0$. Thus $v_0=b_0(d_0e_1+e_0e_2)\otimes\nu_0$ and $V$ is independent of $w$. Differentiating $w=\B(\beta,\cdot)$ gives
\begin{equation}\label{eq:full-w-recurrence}
\nabla_Xw=[\eta(X)+\tau(X)+(\kappa-1)\beta(X)]w+\beta(X)v_0.
\end{equation}

Let $a_0=(\eta-3\beta)/2$. The pure compatibility of \eqref{eq:full-B-recurrence} is
\begin{align*}
0={}&\partial a_0(X,Y)\B-a_0(X)[w\odot Y+\beta(Y)D]
+a_0(Y)[w\odot X+\beta(X)D]\\
&+(\nabla_Xw)\odot Y-(\nabla_Yw)\odot X+w\odot T(X,Y)\\
&+\partial\beta(X,Y)D-\beta(X)\nabla_YD+\beta(Y)\nabla_XD.
\end{align*}
Here $\partial a_0=-W$ and $\partial\beta=W$. Take $X\in P$ and $Y=n$, and substitute \eqref{eq:full-w-recurrence}. Since $W(X,n)=q\tau(X)$, this gives
\begin{align*}
\nabla_XD={}&(\eta(X)/2-\tau(X))D+\tau(X)\B\\
&-q^{-1}w\odot[\tau(X)n+T_0(X,n)]
+[(\kappa+1/2)w+v_0]\odot X.
\end{align*}
The middle two terms equal $v_0\odot X$. This can be checked on $e_1,e_2$ in \eqref{eq:full-unitary-B}, using
\[
T_0(e_1,n)=-d_0e_1-e_0e_2,\qquad
T_0(e_2,n)=Ae_1+d_0e_2+b_0n.
\]
Therefore
\begin{equation}\label{eq:D-plane}
\nabla_XD=(\eta(X)/2-\tau(X))D+V\odot X,\qquad X\in P.
\end{equation}

The second identity of \eqref{eq:flag-transport} yields
\[
\nabla_Xv_0=\eta(X)v_0+\bigl[\tau(X)/2-\tau(A_n(X))/q\bigr]w,
\]
and hence
\[
\nabla_XV-\eta(X)V=\bigl[X\kappa+(\kappa+3/2)\tau(X)-2\tau(A_n(X))/q\bigr]w.
\]
This retains the normal-motion term. For $X,Y\in P$, the same torsion and normal formulas give $\partial\tau(X,Y)=W(X,Y)=0$. Apply pure compatibility to \eqref{eq:D-plane}. If $H(X)=\nabla_XV-\eta(X)V$, we obtain $H(X)\odot Y=H(Y)\odot X$. Each $H(X)$ is a multiple of $w$. Taking $X$ along $w$ and $Y$ independent of it gives $H=0$. Thus
\begin{equation}\label{eq:V-plane}
\nabla_XV=\eta(X)V,\qquad X\in P.
\end{equation}
Set $z=\tr(\B^{-1}D)$. Differentiating, using \eqref{eq:full-B-recurrence} and \eqref{eq:D-plane}, gives
\[
Xz=-\tau(X)z+\tr[\B^{-1}(V\odot X)]
-\tr[\B^{-1}(w\odot X)\B^{-1}D]=(2-z)\tau(X).
\]
The last trace vanishes because $D(\beta,\cdot)=0$, while $\B^{-1}V=(\kappa+1/2)\beta+2\tau$.

\step{The third vector and the normalization.}
By \eqref{eq:V-plane}, there is a unique smooth $Z\in E\otimes\KM$ with $\nabla_XV=\eta(X)V+\beta(X)Z$ for every $X$. Locally, $Z=q^{-1}(\nabla_nV-\eta(n)V)$. Put $Y=Z+V$. Exterior differentiation gives
\[
0=W\otimes V-\eta\wedge\beta\otimes Z+W\otimes Z-\beta\wedge\nabla'Z.
\]
Evaluating on $X\in P,n$ shows that
\[
\nabla_XY=(\eta(X)-\tau(X))Y\quad(X\in P).
\]
In \eqref{eq:full-unitary-B}, $\delta=-b_0^2e_0\nu_0$ and $\tau(V)=-2\delta$. Differentiating $\beta(V)=0$ gives $\beta(Z)=-\tau(V)=2\delta$. Thus $\beta(Y)=2\delta$, and comparison of the $e_2,e_3$ coefficients gives
\begin{equation}\label{eq:full-wedge}
w\wedge V\wedge Y=-4\delta^2.
\end{equation}
In particular these three sections are independent.

Put $k=\kappa/2-5/4$. There are smooth functions $a,b,c$ such that the full derivatives are
\begin{align}
\nabla_Xw&=(\eta(X)+\tau(X)+k\beta(X))w+\beta(X)V/2,\notag\\
\nabla_XV&=(\eta(X)-\beta(X))V+\beta(X)Y,\label{eq:full-triple}\\
\nabla_XY&=\beta(X)aw+\beta(X)bV+(\eta(X)-\tau(X)+c\beta(X))Y.\notag
\end{align}
Determinant differentiation in \eqref{eq:full-B-recurrence} gives
\[
\nabla'_{\KM}\delta=\ell\delta,\qquad \ell=3\eta/2+(z-7/2)\beta.
\]
Comparing the derivative of \eqref{eq:full-wedge} with \eqref{eq:full-triple} gives $c=2z-6-k$. Differentiating $\beta(Y)=2\delta$ then gives $\tau(Y)/(2\delta)=-z-k$.

Define $F_1=w/\delta$, $F_2=V/\delta$, $F_3=Y/\delta$. Their matching volume satisfies $\delta=-4\nu$, and the preceding contractions give
\[
\beta=2\varphi_3,\qquad \tau=-2\varphi_2-2(z+k)\varphi_3.
\]
The equation $\B(\beta,\cdot)=w$ fixes the third column of $\B$ as $(-2,0,0)^t$. Its determinant fixes the $22$ entry as $1$, and the $F_1$ coefficient of $V=(\kappa+1/2)w+2\B(\tau,\cdot)$ fixes the $12$ entry as $2z-3$. This proves \eqref{eq:full-B-normal}.

Divide \eqref{eq:full-triple} by $\delta$, subtracting $\ell$ in each column. The resulting column connection is
\[
\Omega=\Sigma I+\tau\diag(1,0,-1)
+\beta\begin{pmatrix}k&0&a\\1/2&-1&b\\0&1&2z-6-k\end{pmatrix},
\qquad\Sigma=-\eta/2-(z-7/2)\beta.
\]
Substituting the formulas for $\beta,\tau$ proves \eqref{eq:full-connection}. Alternating-symbol inversion with the matching volume gives
\[
T_0(F_1,F_2)=-2F_1,\quad
T_0(F_1,F_3)=-(2z-3)F_1-F_2,\quad
T_0(F_2,F_3)=uF_1+(2z-3)F_2-2F_3.
\]
In $[F_i,F_j]=\nabla_{F_i}F_j-\nabla_{F_j}F_i-T(F_i,F_j)$ the scalar $\eta$-terms cancel the trace torsion. This gives the brackets in \eqref{eq:full-brackets}. The formula for $Xz$ gives its first two derivatives. Finally, the $11$ entry of \eqref{eq:plane-B}, retaining the canonical connection, is $Xu=-2\tau(X)u-4\varphi_1(X)$ for $X\in P$. This gives the derivatives of $u$.

\step{The normal acceleration.}
On the dense independence locus of $\B,C_0$, write in \eqref{eq:full-unitary-B}
\[
C_0=\begin{pmatrix}F&H&-3s_0/2\\H&J&0\\-3s_0/2&0&0\end{pmatrix}\otimes\nu_0.
\]
The five coefficients of \eqref{eq:mixed-syzygy} give
\begin{equation}\label{eq:full-mixed-arrays}
\begin{aligned}
\bigl((\nabla_{\bar e_c}\alpha)(e_p)\bigr)_{p=1,2;\,c=1,2,3}
&=\begin{pmatrix}r_0+\lambda_0d_0&\lambda_0e_0&0\\-\lambda_0A&r_0-\lambda_0d_0&-2\lambda_0b_0\end{pmatrix},\\
G^*&=\begin{pmatrix}
\mu_0-2\lambda_0H&-2\lambda_0J&0\\
2\lambda_0F&\mu_0+2\lambda_0H&-6\lambda_0s_0\\
v_1&v_2&-2\mu_0
\end{pmatrix}.
\end{aligned}
\end{equation}
In particular $\pi_P\nabla_nn=-2\overline{\lambda_0b_0}e_2$. If this acceleration vanished on an open subset of $U$, that set would meet the independence locus and give $\lambda_0=0$ there. The arrays would then give \eqref{eq:scalar-pattern}, which Lemma~\ref{lem:scalar-pattern} excludes. Thus the acceleration is nonzero on an open dense set.

\step{The Gram-matrix formula.}
In the frame $(F_1,F_2,N_0)$, the metric is $\diag(H_0,d)$ and the canonical volume is still $\nu$. Since $\tau(N_0)=0$, we have $s=z+k$, and
\[
\B=\begin{pmatrix}U_0&D_0&-2\\D_0&1&0\\-2&0&0\end{pmatrix}\otimes\nu,
\qquad\alpha=\sqrt d\,\varphi_3,\qquad\beta=2\varphi_3.
\]
Here the coframe refers to $(F_1,F_2,N_0)$. Formula~\eqref{eq:full-connection} gives the two columns of $\pi_P\nabla_{F_i}N_0$ as those of $A_0$. For $n=N_0/\sqrt d$, the plane columns of $X\mapsto\pi_P\nabla_Xn$ are $T_P=A_0/\sqrt d$ and $T_N=v/\sqrt d$; differentiating $d^{-1/2}$ has only normal output.

Apply \eqref{eq:universal-mixed} in this nonunitary frame, raising the barred derivative slot with its actual metric. On the independence locus, the same divisibility argument gives
\[
y_2L_1(y)-y_1L_2(y)
=\lambda_1(U_0y_1^2+2D_0y_1y_2-4y_1y_3+y_2^2).
\]
Thus the two rows of raised coefficients are
\[
\begin{pmatrix}R_0+\lambda_1D_0&\lambda_1&0\\-\lambda_1U_0&R_0-\lambda_1D_0&4\lambda_1\end{pmatrix}.
\]
Before raising, metric compatibility gives the array ${}^tH_0\overline{(T_P,T_N)}$. Raising multiplies on the right by $\diag(H_0,d)^{-t}$. The first block is therefore ${}^t(T_P^{*_{H_0}})$ and the last column is $d^{-1}{}^tH_0\bar T_N$. Transposing the first block and using the last column yields
\[
(A_0)_{\rm tf}=\sqrt d\,\bar\lambda_1J_0^{*_{H_0}},\qquad
H_0v=4d^{3/2}\bar\lambda_1\binom01.
\]
Hence $m=\sqrt d\,\bar\lambda_1$, proving \eqref{eq:Gram-motion} on the independence locus. All its quantities are smooth on $U$, so the formula extends to $U$ by continuity. The acceleration is represented by $v/d$, and \eqref{eq:Gram-motion} shows that $v\ne0$ exactly when $m\ne0$. Its density was proved above. We have therefore completed the proof of the proposition.
\end{proof}

\vspace{0.1cm}

\begin{proposition}\label{prop:full-flat}
If $\det\B\not\equiv0$, then $R=0$.
\end{proposition}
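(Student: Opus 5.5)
Argue by contradiction: assume $R\not\equiv0$ and $\det\B\not\equiv0$, so that the normalized frame $F_1,F_2,F_3$ of Proposition~\ref{prop:normalization} exists on the open dense set $U$. The introduction describes the contradiction as ``two incompatible evaluations of the same curvature component'', so the plan is to compute one component of the Chern curvature in two independent ways in this frame.

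\emph{First evaluation, from the structure equations.} On $U$ we know $\Gamma_1,\Gamma_2,\Gamma_3$ from \eqref{eq:full-connection} and all brackets from \eqref{eq:full-brackets}. We also know the derivatives $F_1z,F_2z,F_1u,F_2u$. The mixed-type curvature is given by
\[
{}^t\Theta(F_i,F_j)=F_i\Gamma_j-F_j\Gamma_i+[\Gamma_i,\Gamma_j]-\Omega([F_i,F_j]).
\]
We evaluate it on pairs of $(1,0)$ vectors, for which the pure part must vanish. The pair $(F_1,F_2)$ involves only known data, namely the derivatives of $\eta_1,\eta_2$ and the constant matrix $\diag(-2,0,2)$. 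Setting ${}^t\Theta(F_1,F_2)=0$ should pin down $\partial\eta$ on $P$, in a way consistent with $W|_{\Lambda^2P}=0$. The pairs $(F_1,F_3)$ and $(F_2,F_3)$ involve $F_iz$, $F_ia$, $F_ib$ and $W(F_i,F_3)$. Their vanishing gives linear relations, which determine $F_1a,F_2a,F_1b,F_2b$ and $W$. Compatibility with $\partial W=0$ and with $W=\partial\beta$, $\beta=2\varphi_3$, should already impose algebraic constraints on $z$. Next, the mixed curvature entries involving barred directions come from the Gram matrix $H$. Using \eqref{eq:Gram-motion}, we express $\pi_P\nabla_{N_0}N_0$ and the trace-free part of $A_0$ through the single function $m$. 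From this we obtain, for instance, the component $R_{1\bar 1 3\bar 3}$ (or its analogue in the frame $F_1,F_2,N_0$) as an explicit expression in $m,D_0,U_0,d,H_0$.

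\emph{Second evaluation, from $Q$ and $G$.} Recall that $G=\Phi^*-2\rho$ and that $Q$ determines $G$ by \eqref{eq:Q-decomp}. Also $\operatorname{Im}Q\subset P\otimes\KM$ and $S=-2\Sym_3Q$. The mixed arrays \eqref{eq:full-mixed-arrays} give $G^*$ in terms of $\lambda_0,\mu_0,F,H,J$, and \eqref{eq:Gram-motion} identifies $\lambda_1$ with $m/\sqrt d$. We then evaluate the same curvature component as a combination of entries of $Q$ and of $\rho,\Phi$, using the transport equation $\nabla'Q=CQ$ along $F_1,F_2$ (with $C$ read off from the normal form of $\B$). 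Matching the two expressions should produce an identity of the form $m\cdot(\text{explicit nonzero factor})=0$. The candidate factor is something like $d$ or $\det H_0$ times a polynomial in $z$ that the first-step relations force to be nonzero. Since $\{m\ne0\}$ is open and dense in $U$ by Proposition~\ref{prop:normalization}, this yields the contradiction. If instead the identity forces $m=0$ only on an open set, we reach $\pi_P\nabla_nn=0$ there, and Lemma~\ref{lem:scalar-pattern} finishes.

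The main obstacle is the bookkeeping in the second evaluation. The frame is not unitary, so raising indices uses $H$. We must also choose the component cleverly, so that the unknowns $a,b,u$ and the free entries $F,H,J,v_1,v_2$ drop out. I would start with the $(F_1,F_1)$ barred-normal component, because $w$ lies along $F_1$ and $F_1z=0$. This component should therefore see only $m$ and constants. Finally, because $U$ is dense and all tensors are analytic (Lemma~\ref{lem:analytic}), the contradiction on an open subset of $U$ shows $R\equiv0$.
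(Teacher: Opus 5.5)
Your proposal has a genuine gap. It never identifies the curvature component that is evaluated twice, nor any mechanism that would make the two values differ, and neither of the evaluations you sketch supplies new information.

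\textbf{Your first evaluation.} The pure structure equations are already consistent with the normalization. The Jacobi identity for \eqref{eq:full-brackets} holds identically, and $W=\partial\beta=-4\varphi_2\wedge\varphi_3$ is automatically $\partial$-closed. The equations ${}^t\Theta(F_i,F_j)=0$ only give $F_1a=F_1b=0$, $F_2a=6a$, $F_2b=4b$. So no algebraic constraint on $z$ arises there. Your claim that a mixed component such as $R_{1\bar13\bar3}$ is an explicit function of $m,D_0,U_0,d,H_0$ is also unjustified. Mixed components involve the barred connection ($\Pi_{\bar i}$, $\Pi_{\bar N}$, $k(\bar F_i)$), hence second derivatives of $\log d$ and barred derivatives of $\eta$. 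Equation \eqref{eq:Gram-motion} does not control these, and $\Pi_{\bar N}$ in particular is essentially unknown.

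\textbf{Your second evaluation.} Going through $G$ and \eqref{eq:full-mixed-arrays} only re-expresses curvature in further unknowns ($\rho,\Phi,\mu_0,F,H,J,v_1,v_2$). Those arrays were already used up in proving \eqref{eq:Gram-motion}. Differentiating $Q$ along $F_1,F_2$ does not reach the relevant barred coefficient either. So the hoped-for identity $m\cdot(\text{nonzero})=0$ has no source.

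\textbf{What is missing.} You need two genuinely independent evaluations of a single entry, namely $(K_2)_{21}$ of $K_2=\pi_PR(F_2,\bar N)|_P$ in the frame $(F_1,F_2,N_0)$.
\begin{itemize}
\item Because $Q_3=0$, polarization gives the skew symmetry \eqref{eq:plane-curvature-skew} for $i,k\le2$. This turns $K_i$ into the lower-left curvature block on $(F_i,\bar F_j)$. There $q(\bar F_j)=-d^{-1}\mathbf e_j^tH_0\mathsf T$ is expressed through \eqref{eq:Gram-motion}, and the derivatives of $H_0$ cancel by metric compatibility.
\item The vanishing lower-left block of $R(F_i,\bar N)$ gives the transport $F_i\mu=(a_i-4\epsilon_i)\mu$. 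Together with the previous step this yields $(K_2)_{21}=-2\mu$.
\item The second Bianchi identity on $(F_1,F_2,\bar N)$ fixes the trace parts. The resulting $K_1$ has zero first column, which yields the root $Q^c(F_1,F_1)=0$. The relation $K_1F_2+K_2F_1=8\mu F_1$ gives $Q^c(F_1,F_2)\neq0$ on $\{\mu\neq0\}$.
\item Differentiate this root along $\bar N$ using the conjugate of \eqref{eq:Q-transport}; the output term drops out. This forces $(\Pi_{\bar N})_{21}=0$ on that open set.
\item Recompute the same entry from $d\Pi+\Pi\wedge\Pi+p\wedge q$ on $(F_2,\bar N)$, using $(\Pi_N)_{21}=1$. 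This gives $-4\mu$, so $\mu=0$ on an open dense set, a contradiction.
\end{itemize}
Without the root of $Q^c$ and its barred derivative, your plan has no second evaluation. It stalls exactly at the bookkeeping you flag as the main obstacle.
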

\begin{proof}
Suppose $R\not\equiv0$ and use Proposition~\ref{prop:normalization}. Abbreviate $N=N_0$, $D=D_0$, $A=A_0$, and write
\[
\mu=\bar m,\qquad c=\tfrac12\tr A,\qquad
\mathsf T=A^{*_{H_0}}=\bar cI+\mu J_0.
\]
By \eqref{eq:full-brackets} and \eqref{eq:Gram-def},
\begin{equation}\label{eq:DU-derivatives}
F_1D=2A_{21},\quad F_2D=2D-2+2A_{22},\quad
F_1U_0=4A_{11}-4,\quad F_2U_0=4U_0+4A_{12}.
\end{equation}
Work in the frame $(F_1,F_2,N)$, whose Gram matrix is $\diag(H_0,d)$. Write its column connection as
\[
\Omega=\begin{pmatrix}\Pi&p\\q&k\end{pmatrix},
\qquad \Pi_i=\Pi(F_i),\quad \Pi_{\bar i}=\Pi(\bar F_i),
\quad\Pi_N=\Pi(N),\quad\Pi_{\bar N}=\Pi(\bar N).
\]
Let $\mathbf e_1,\mathbf e_2$ denote the standard coordinate columns of $\C^2$. Changing the third frame vector in \eqref{eq:full-connection}, and then using metric compatibility, gives
\begin{equation}\label{eq:full-blocks}
\begin{gathered}
\Pi_i=-\eta(F_i)I/2+L_i,\qquad L_1=0,\quad L_2=\diag(-2,0),\\
k_i:=k(F_i)=-\eta(F_i)/2+2\epsilon_i,
\qquad\epsilon_1=0,\quad\epsilon_2=1,\\
p(F_i)=A\mathbf e_i,\quad q(F_i)=0,\quad q(N)=2\mathbf e_2^t,\\
p(\bar F_i)=0,\quad q(\bar F_i)=-d^{-1}\mathbf e_i^tH_0\mathsf T,
\quad q(\bar N)=-4\mu\mathbf e_2^t.
\end{gathered}
\end{equation}
The last equality follows from $q(\bar N)=-d^{-1}v^\dagger H_0$ and \eqref{eq:Gram-motion}. Set $a_i=F_i(\log d)-k_i$. The remaining identities needed below are
\begin{equation}\label{eq:full-metric-identities}
F_iH_0=H_0\Pi_i+\Pi_{\bar i}^{\dagger}H_0,\qquad
 a_i=\overline{k(\bar F_i)},\qquad (\Pi_N)_{21}=1.
\end{equation}

\step{The mixed curvature.}
All curvature components in this proof refer to $(F_1,F_2,N)$ and its actual metric. The zero third row of $Q$, together with \eqref{eq:polarization}, gives
\begin{equation}\label{eq:plane-curvature-skew}
R_{i\bar jk\bar l}=-R_{i\bar lk\bar j},\qquad i,k\leq2.
\end{equation}
In particular $R_{i\bar3k\bar3}=0$, so the lower-left block of $R(F_i,\bar N)$ vanishes. The mixed torsion is zero, and \eqref{eq:full-blocks} gives the complete bracket
\begin{equation}\label{eq:Fi-N-bracket}
[F_i,\bar N]=a_i\bar N-\sum_{p=1}^2(\Pi_{\bar N})_{pi}F_p+4\mu\epsilon_iN.
\end{equation}
The lower-left curvature block is $dq+q\wedge\Pi+k\wedge q$. Its value on $(F_i,\bar N)$ is
\[
-4\{F_i\mu+(4\epsilon_i-a_i)\mu\}\mathbf e_2^t.
\]
Indeed, the bracket term contributes $4a_i\mu-8\epsilon_i\mu$, and the two connection-product terms contribute another $-8\epsilon_i\mu$. Its vanishing proves
\begin{equation}\label{eq:mu-transport}
F_i\mu=(a_i-4\epsilon_i)\mu.
\end{equation}

Put $K_i=\pi_PR(F_i,\bar N)|_P$. Stack the rows of the lower-left curvature on $(F_i,\bar F_j)$, $j=1,2$. Equation~\eqref{eq:plane-curvature-skew} makes this matrix $-d^{-1}H_0K_i$. The anti-plane, normal, and anti-normal coefficients of $[F_i,\bar F_j]$ are, respectively,
\[
\overline{(\Pi_{\bar i})_{pj}},\qquad
 d^{-1}(H_0\mathsf T)_{ji},\qquad -d^{-1}(H_0A)_{ji}.
\]
Thus the stacked matrix $q([F_i,\bar F_j])$ is
\[
-d^{-1}\Pi_{\bar i}^{\dagger}H_0\mathsf T
+2d^{-1}H_0\mathsf T\mathbf e_i\mathbf e_2^t
+4\mu d^{-1}H_0A\mathbf e_i\mathbf e_2^t.
\]
Differentiate $q(\bar F_j)=-d^{-1}\mathbf e_j^tH_0\mathsf T$ and use \eqref{eq:full-metric-identities} to cancel the derivatives of $H_0$. Multiplication by $-dH_0^{-1}$ gives
\begin{equation}\label{eq:Ki-full}
K_i=F_i\mathsf T+[\Pi_i,\mathsf T]-a_i\mathsf T
+2\mathsf T\mathbf e_i\mathbf e_2^t+4\mu A\mathbf e_i\mathbf e_2^t.
\end{equation}
Let $\ell=\bar c+(D+2)\mu$ and $b_i=F_i\bar c-a_i\bar c$. Using \eqref{eq:DU-derivatives} and \eqref{eq:mu-transport}, we obtain
\begin{align}
K_1&=b_1I+2\bar c\mathbf e_1\mathbf e_2^t
+\mu\begin{pmatrix}2A_{21}&2D+4\\0&2A_{21}+2\end{pmatrix},\notag\\
K_2&=b_2I+2\bar c\mathbf e_2\mathbf e_2^t
+\mu\begin{pmatrix}-2D-2+2A_{22}&0\\-2&2+2A_{22}\end{pmatrix}.
\label{eq:Ki-before-trace}
\end{align}
For instance, $[L_2,J_0]=\left(\begin{smallmatrix}0&2U_0\\2&0\end{smallmatrix}\right)$, and the upper-right terms cancel against the last two terms of \eqref{eq:Ki-full}. In particular,
\begin{equation}\label{eq:K21-first}
(K_2)_{21}=F_2\mu+2\mu-a_2\mu=-2\mu.
\end{equation}
The trace-free parts are
\[
(K_1)_{\rm tf}=\begin{pmatrix}-\mu&2\ell\\0&\mu\end{pmatrix},\qquad
(K_2)_{\rm tf}=\begin{pmatrix}-\ell&0\\-2\mu&\ell\end{pmatrix}.
\]

\step{The scalar traces and a curvature root.}
Evaluate the second Bianchi identity on $(F_1,F_2,\bar N)$. The pure curvature, $q(F_i)$, and the lower-left blocks of $R(F_i,\bar N)$ vanish. Using $[F_1,F_2]=4F_1$, its plane block is
\begin{equation}\label{eq:Ki-Bianchi}
F_1K_2-F_2K_1-[L_2,K_1]-4K_1-a_1K_2+a_2K_1=0.
\end{equation}
The $11$ and $12$ entries of its trace-free part, together with \eqref{eq:mu-transport}, give
\[
F_1\ell=a_1\ell,\qquad F_2\ell=(a_2-2)\ell.
\]
Differentiate $\ell=\bar c+(D+2)\mu$ and use \eqref{eq:DU-derivatives}. This gives $b_1=-2A_{21}\mu$ and $b_2=-2\bar c+(6-2A_{22})\mu$. Consequently the complete matrices in \eqref{eq:Ki-before-trace} are
\begin{equation}\label{eq:Ki-final}
K_1=\begin{pmatrix}0&2\ell\\0&2\mu\end{pmatrix},\qquad
K_2=\begin{pmatrix}-2\ell+8\mu&0\\-2\mu&8\mu\end{pmatrix}.
\end{equation}

By \eqref{eq:plane-curvature-skew} and reality, $R_{1\bar j1\bar l}=0$ for $j,l\leq2$. The first matrix in \eqref{eq:Ki-final}, together with the lower-left vanishing, gives $R(F_1,\bar N)F_1=0$. Equation~\eqref{eq:plane-curvature-skew} also makes the remaining normal component of $R(F_1,\bar F_j)F_1$ zero. Thus $R(F_1,\bar Z)F_1=0$ for every $Z$.

Write $Q^c=\bar Q:\Sym^2E\longrightarrow\overline{E\otimes\KM}$. In the present nonunitary frame, with its matching volume, alternating-symbol inversion reads
\begin{equation}\label{eq:Qc-inversion}
\frac12(R_{i\bar jk\bar l}+R_{k\bar ji\bar l})
=\sum_a\eps_{jla}(Q^c)_{aik}.
\end{equation}
Therefore $Q^c(F_1,F_1)=0$. Also \eqref{eq:Ki-final} gives $K_1F_2+K_2F_1=8\mu F_1$. Set $(i,k)=(1,2)$ and $(j,l)=(3,p)$ in \eqref{eq:Qc-inversion}. It follows that
\[
(Q^c)_{212}=4\mu(H_0)_{11},\qquad
(Q^c)_{112}=-4\mu(H_0)_{21},\qquad (Q^c)_{312}=0.
\]
Since $(H_0)_{11}>0$, we have
\begin{equation}\label{eq:root-transverse}
Q^c(F_1,F_1)=0,\qquad Q^c(F_1,F_2)\ne0\quad\hbox{on }U_\mu:=\{\mu\ne0\}.
\end{equation}

\step{A second evaluation of the same component.}
The conjugate of \eqref{eq:Q-transport} acts only on the output of $Q^c$. Differentiate the first identity of \eqref{eq:root-transverse} in direction $\bar N$. Its output term vanishes, so
\[
0=2Q^c(\nabla_{\bar N}F_1,F_1).
\]
By \eqref{eq:full-blocks}, $q(\bar N)\mathbf e_1=0$. Thus $\nabla_{\bar N}F_1=(\Pi_{\bar N})_{11}F_1+(\Pi_{\bar N})_{21}F_2$. The second part of \eqref{eq:root-transverse} gives
\[
(\Pi_{\bar N})_{21}=0\quad\hbox{throughout }U_\mu,
\qquad F_2((\Pi_{\bar N})_{21})=0\quad\hbox{there}.
\]
This differentiates an identity on an open set, not an equality at a single point.

Finally evaluate the plane curvature $d\Pi+\Pi\wedge\Pi+p\wedge q$ on $(F_2,\bar N)$. Its $21$ entry is
\begin{align*}
(K_2)_{21}={}&F_2((\Pi_{\bar N})_{21})-\bar N((\Pi_2)_{21})
+[\Pi_2,\Pi_{\bar N}]_{21}\\
&+(p(F_2)q(\bar N)-p(\bar N)q(F_2))_{21}
-a_2(\Pi_{\bar N})_{21}\\
&+\sum_{p=1}^2(\Pi_{\bar N})_{p2}(\Pi_p)_{21}-4\mu(\Pi_N)_{21}.
\end{align*}
The matrices $\Pi_1,\Pi_2$ are diagonal, the off-diagonal product has zero first column, and $(\Pi_N)_{21}=1$. Hence
\begin{equation}\label{eq:K21-second}
(K_2)_{21}=F_2((\Pi_{\bar N})_{21})+(2-a_2)(\Pi_{\bar N})_{21}-4\mu.
\end{equation}
On $U_\mu$, this is $-4\mu$, whereas \eqref{eq:K21-first} gives $-2\mu$ in the same frame and normalization. Thus $U_\mu$ is empty. Proposition~\ref{prop:normalization} makes it open and dense in the nonempty set $U$, a contradiction. This proves the proposition. No normalized field or inverse determinant has been extended beyond $U$.
\end{proof}

\vspace{0.3cm}

\section{The case \texorpdfstring{$\det\B\equiv0$}{det B identically zero}}\label{sec:singular}
Suppose throughout this section that $R\not\equiv0$ and $\delta=\det\B\equiv0$. We retain $O_2,P,O_*$ from Section~\ref{sec:geometry}. For a unit normal $n$ to $P$, write $\alpha_{\rm im}=(\cdot,n)$ and $\tau=\tau_n$.

\vspace{0.3cm}

\noindent {\bf 1). A holomorphic plane and a global one-form.}

\vspace{0.1cm}

\begin{lemma}\label{lem:singular-factor}
On $O_*$, $\B$ has rank two and there is a unique ordinary vector field $U$ such that $\B=w\odot U$. The line $L=\Span(w)\otimes\KM^{-1}$ is preserved by all Chern derivatives of type $(1,0)$. Its orthogonal plane $H=L^\perp$ is holomorphic, and
\begin{equation}\label{eq:H-skew}
\pi_L\nabla_XY+\pi_L\nabla_YX=0,\qquad X,Y\in H.
\end{equation}
There is a global real analytic $(1,0)$-form $\beta$, agreeing with $\iota_UW$ on $O_*$, such that
\begin{equation}\label{eq:singular-beta}
\beta\otimes\beta=-4\adj\B,\quad \partial\beta=-W,
\quad |\beta|^2\leq2t,
\quad\beta(\B(\xi,\cdot))=0,\quad\beta\wedge W=0.
\end{equation}
In particular $\vartheta:=\eta+\beta$ satisfies $\partial\vartheta=0$.
\end{lemma}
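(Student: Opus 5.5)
I would work in an adapted unitary frame on $O_*$ with $e_3=n$, so that $P=\Span\{e_1,e_2\}$. Proposition~\ref{prop:flag} gives $\B_{33}=0$, and $\tau=(-\B_{23},\B_{13},0)\ne0$. Lemma~\ref{lem:alignment} gives $\tau(w)=0$, and we also have $w_3=0$. Rotating inside $P$, I would take $e_1$ along $w$. Alignment then gives $\B_{23}=0$, so $\tau=\B_{13}\varphi_2$ with $\B_{13}\ne0$. The matrix of $\B$ is
\[
\begin{pmatrix}A&d_0&b_0\\d_0&e_0&0\\b_0&0&0\end{pmatrix},
\]
whose determinant is $-b_0^2e_0$. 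Since $\delta\equiv0$ and $b_0\ne0$, we get $e_0=0$. Hence $\B$ has rank two and factors as $\B=w\odot U$, where $U$ has $n$-component $2b_0/s_0$; here $w=s_0e_1$, and $s_0\ne0$ because $W\ne0$ on $O_*$. Uniqueness of $U$ follows because the symmetric product of a fixed nonzero vector with $U$ is injective in $U$. Smoothness comes from solving $U=\B(\xi,\cdot)/\xi(w)$ for any covector $\xi$ with $\xi(w)\ne0$, after subtracting the $w$-part.

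For the line $L$: Proposition~\ref{prop:flag} already shows that $L$ is preserved by $\nabla_X$ for $X\in P$. The normal direction remains. I would differentiate $\B=w\odot U$ along $n$ and compare with the general formula \eqref{eq:normal-B-remainder}, $\nabla_n\B=\frac12\eta(n)\B+w\odot n-C_0$, together with $\det\B\equiv0$. Differentiating the determinant identity through the adjugate forces $\adj\B(\nabla_n\B)=0$. Since $\adj\B$ is a rank-one multiple of $\xi\otimes\xi$, where $\xi$ annihilates $w$ and $U$, this says $(\nabla_n\B)(\xi,\xi)=0$. Combining this with the $P$-derivative formula \eqref{eq:flag-transport} for $w$ should show that $\nabla_nw$ has no component outside $\Span(w,U)$. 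I then expect the mixed equation \eqref{eq:universal-mixed} to be needed to kill the $U$-component, for instance via a coefficient comparison like the one in Lemma~\ref{lem:alignment}. \textbf{This is the step I expect to be the main obstacle.} If $L$ is preserved by $\nabla'$, then $H=L^\perp$ is preserved by $\nabla''$ by metric compatibility, so $H$ is $\bp$-closed, hence holomorphic. The argument is the same graph-frame argument as for $\mathscr S$ in Proposition~\ref{prop:Qzero}. For \eqref{eq:H-skew}, I would write $\pi_L\nabla_XY=-(Y,\nabla_{\bar X}\hat w)\hat w$, with $\hat w$ the normalized section, and use the torsion identity $\alpha_L(T(X,Y))=\alpha_L(\nabla_XY-\nabla_YX-[X,Y])$. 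Involutivity of the holomorphic plane $H$ and the explicit torsion in the normal form above should give the symmetric part.

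For $\beta$, define it on $O_*$ as $\iota_UW$ and check $\beta\otimes\beta=-4\adj\B$ in the frame. The only nonzero adjugate entry is $-b_0^2$ in the $22$-slot, while $W_{12}=0$ and $W_{32}=\pm s_0\B_{13}/b_0$-type terms yield $\beta=\pm2b_0\varphi_2$. The identity $-4\adj\B=\beta\otimes\beta$ determines $\beta$ up to sign. Since $\adj\B$ is a global analytic tensor, $\beta$ extends across the rank drop: I would pick the sign by continuity on the connected components of $O_*$. Then $\beta$ is a square root of a global analytic rank-$\le1$ quadratic. Because $|\beta|^2=4|\adj\B|^{1/2}$-type quantities are bounded by $2t$ (checked in the frame as $4|b_0|^2\le2t$), $\beta$ is locally bounded. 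The identity $\partial\beta=-W$ on the dense set $O_*$ then defines $\beta$ globally as the analytic form extending continuously; analyticity follows from Lemma~\ref{lem:analytic}. The remaining identities $\beta(\B(\xi,\cdot))=0$ and $\beta\wedge W=0$ are pointwise frame checks on $O_*$ followed by continuity. Finally, $\partial\beta=-W=-\partial\eta$ gives $\partial\vartheta=0$. Proving $\partial\beta=-W$ itself should follow from $\beta=\iota_UW$, $\partial W=0$, and the transport formulas for $w$ and $U$ along $P$ and $n$.
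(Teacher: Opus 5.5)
Your frame normalization, the rank-two factorization $\B=w\odot U$, and the graph argument for holomorphicity of $H$ agree with the paper. The two claims about $L$ and $H$, however, do not follow from the tools you propose; both need the identities in \eqref{eq:N-mixed}.

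\emph{Normal preservation of $L$.} On its own, the adjugate identity is vacuous. Once $\B=w\odot U$, one has $\nabla_n\B=(\nabla_nw)\odot U+w\odot\nabla_nU$, so $(\nabla_n\B)(\xi,\xi)=0$ holds automatically because $\xi(w)=\xi(U)=0$. Also, \eqref{eq:flag-transport} only concerns directions in $P$. The missing input is the divergence formula $\sum_p\Nt_{app}=\frac12w_a$. Together with \eqref{eq:plane-B}, it pins down the normal column $(\Nt_n)_{a3}=(-s_0,0,0)$; this is the third-column identity of \eqref{eq:universal-remainder}. Write $x=\nabla_nw$ in the adapted frame. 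Since $\B_{23}=\B_{33}=0$, the $33$ and $23$ entries of $\nabla_n\B$ are $x_3U_3$ and $\frac12(x_2U_3+x_3U_2)$. Because $U_3=2b_0/s_0\ne0$, this forces $x_2=x_3=0$. Hence every $(1,0)$ derivative of $\B$ has the factor $L$, and projecting the derivative of $\B=l\odot V_0$ to $\Sym^2(E/L)$ gives $\nabla'l\in L$.

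\emph{The skew condition \eqref{eq:H-skew}.} The torsion identity and brackets only control the skew part $\pi_L(\nabla_XY-\nabla_YX)=\pi_L(T(X,Y)+[X,Y])$. But \eqref{eq:H-skew} asserts that the symmetric part vanishes, i.e.\ that the bilinear form $(Y,\nabla_{\bar X}l)$ on $H$ is skew. That is information about $\bp l$ which no first-order torsion identity supplies. Moreover, a holomorphic plane need not be involutive; the paper later keeps the Levi form $\mathfrak l$ of $H$ as a possibly nonzero section. The paper gets \eqref{eq:H-skew} from the mixed equation in \eqref{eq:N-mixed}. By the previous step, $\Nt_p=l\odot Z_p$ with $Z_2=s_0e_2$. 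On diagonal variables the left side is $J_l(y)Z_p(y)+y_1K_p(y)$, where $J_l(y)=\sum_{a,c}(\nabla_{\bar e_c}l,e_a)y_ay_c$. The right side is divisible by $\B(y)$, hence by $y_1$. Setting $y_1=0$ and $p=2$ gives $J_l(0,y_2,y_3)=0$, which is \eqref{eq:H-skew} by metric compatibility.

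\emph{The form $\beta$.} Your frame computation has a slip. $\adj\B$ has entries $-b_0^2$, $b_0d_0$, $-d_0^2$ in the $22,23,33$ slots, so $\beta=\iota_UW=-2b_0\varphi_2+2d_0\varphi_3$ and $|\beta|^2=4(|b_0|^2+|d_0|^2)\le2t$.

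More seriously, your extension argument does not work. There is no sign to choose, since $\iota_UW$ is canonical on $O_*$. Local boundedness of a root of an analytic tensor gives neither continuity nor analyticity across $M\setminus O_*$ (compare $\sqrt{x^2+y^2}$ on $\R^2\setminus\{0\}$). The equation $\partial\beta=-W$ neither determines $\beta$ nor controls its regularity. Lemma~\ref{lem:analytic} does not apply either, because $\beta$ is not a priori an analytic expression in $g$ and its derivatives.

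What makes the extension work is rationality. Write $w=u\otimes\nu_0$ in holomorphic coordinates with a holomorphic frame $\nu_0$ of $\KM$, and choose a component $u_j$ that is not the zero germ; one exists because $W$ has no open zero set. On $\{u_j\ne0\}\cap O_*$ one has $U^j=\B_{jj}/u_j$ and $U^i=(2\B_{ij}u_j-\B_{jj}u_i)/u_j^2$. So each coefficient of $\beta$ is a quotient $A/C$ of analytic germs with $A^2=FC^2$, where $F$ is a coefficient of $-4\adj\B$. Such a quotient extends analytically: complexify, take a Cauchy integral in a good direction, and remove the bounded singularities slice by slice.

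\emph{The identity $\partial\beta=-W$.} You only assert this. It can be checked directly once the normal-preservation step is in place, since one finds $\nabla_X\beta=\iota_{Z_X}W$ where $\nabla_X\B=w\odot Z_X$. The paper's argument is shorter. Since $T_0(X,u)=\frac12\beta(X)u$, the endomorphism $C$ acts on $L$ by $\frac12(\eta+\beta)$. Both $\nabla'$ and $D'=\nabla'-C$ preserve $L$ and have zero pure curvature there; for $D'$ this is because the compatibility endomorphism $J$ annihilates $P\supset L$. Hence $\partial(\eta+\beta)=0$.
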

\begin{proof}
By Lemma~\ref{lem:alignment}, $\tau(w)=0$. Choose a local unitary frame with $e_1$ along $w$, $e_1,e_2$ spanning $P$, and $e_3=n$. Then
\begin{equation}\label{eq:singular-unitary-B}
\B=\begin{pmatrix}A_0&D&b\\D&0&0\\b&0&0\end{pmatrix}\otimes\nu_0,
\qquad w=s_0e_1\otimes\nu_0,
\qquad bs_0\ne0.
\end{equation}
Indeed, alignment gives $\B_{23}=0$, $\tau\ne0$ gives $b\ne0$, and $\det\B=-b^2\B_{22}=0$ gives $\B_{22}=0$. The nonzero minor $-b^2$ proves rank two. The factorization is
\[
U=(A_0e_1+2De_2+2be_3)/s_0.
\]
The plane recurrence and the divergence of $\Nt$ give $(\Nt_3)_{a3}=(-s_0,0,0)_a$. Since
\[
\adj\B=\begin{pmatrix}0&0&0\\0&-b^2&bD\\0&bD&-D^2\end{pmatrix},
\]
its contraction with the normal derivative of $\B$ gives $(\Nt_3)_{22}=0$. Thus
\begin{equation}\label{eq:singular-normal-N}
\Nt_3=\begin{pmatrix}r_0&v_0&-s_0\\v_0&0&0\\-s_0&0&0\end{pmatrix}\otimes\nu_0.
\end{equation}
Every unbarred derivative of $\B$ therefore has the common factor $L$. Locally write $\B=l\odot V_0$, with $l$ a unit section of $L$ and $V_0\bmod L\ne0$. Differentiating and projecting twice to $E/L$ gives
\[
\pi(\nabla_Xl)\odot\pi(V_0)=0.
\]
Symmetric multiplication by a nonzero vector is injective, so $\nabla_Xl\in L$. Metric compatibility gives $\nabla''H\subset H$. The graph argument in the proof of Proposition~\ref{prop:Qzero} shows that $H$ is an ordinary holomorphic subbundle.

To prove \eqref{eq:H-skew}, write $\Nt_p=l\odot Z_p$, including its free derivative slot. In the frame above, $Z_2=s_0e_2$. On diagonal variables, the left side of \eqref{eq:N-mixed} is
\[
J_l(y)Z_p(y)+y_1K_p(y),\qquad
J_l(y)=\sum_{a,c}(\nabla_{\bar e_c}l,e_a)y_ay_c,
\]
where $K_p$ includes the derivatives of $Z_p$ and of the free $p$-slot. The right side is divisible by $y_1$. Set $y_1=0$ and $p=2$; cancellation gives $J_l(0,y_2,y_3)=0$. Its three coefficients, by metric compatibility, are exactly \eqref{eq:H-skew}.

On $O_*$, $W=s_0\varphi_2\wedge\varphi_3$ and
\[
\beta=\iota_UW=-2b\varphi_2+2D\varphi_3.
\]
The adjugate above gives all the algebraic identities in \eqref{eq:singular-beta}, including $|\beta|^2=4(|b|^2+|D|^2)\leq2|\B|^2$. The adjugate is an ordinary symmetric covariant tensor, because its line weight $(\det E)^2\otimes\KM^2$ is trivial.

Writing $w=u\otimes\nu_0$, alternating-symbol inversion gives
\[
T_0(X,Y)=\tfrac12\{\nu_0(u,X,Y)U+\nu_0(U,X,Y)u\}.
\]
Hence $T_0(X,u)=\beta(X)u/2$ and $T_0(X,U)=-\beta(X)U/2$. Both $\nabla'$ and $D'=\nabla'-C$ preserve $L$, and both have zero pure curvature there: for $D'$, the compatibility endomorphism annihilates $P\supset L$. Their difference on $L$ is $-(\eta+\beta)/2$. Therefore $\partial(\eta+\beta)=0$ on $O_*$.

It remains to extend $\beta$. We use the following elementary analytic removal fact. If real analytic germs $A,C,F$, with $C$ not the zero germ, satisfy $A^2=FC^2$, then $A/C$ extends real analytically. Complexify the convergent power series. The quotient is holomorphic off $C=0$ and locally bounded there, since $|A/C|^2=|F|$. Choose a coordinate direction in which the first nonzero homogeneous part of $C$ is nonzero. On a small circle in that direction, $C$ is nonzero, also for nearby values of the other coordinates. The Cauchy integral over that fixed circle defines a jointly holomorphic extension inside. On each one-variable slice, boundedness removes every possible pole, so this extension agrees with $A/C$. Restricting to the real locus proves the assertion.

At any point, some component $u_j$ of $w$ in a holomorphic coordinate and canonical frame is not the zero analytic germ, because $W$ has no open zero set. On $\{u_j\ne0\}\cap O_*$ the factorization gives
\[
U^j=\B_{jj}/u_j,\qquad
U^i=(2\B_{ij}u_j-\B_{jj}u_i)/u_j^2\quad(i\ne j).
\]
Each coefficient of $\beta=\nu_0(u,U,\cdot)$ is a quotient of analytic functions, and its square is a diagonal coefficient of $-4\adj\B$. The removal fact extends it across its denominator zeros. Local extensions agree on dense overlaps and glue. All equations in \eqref{eq:singular-beta} then extend by continuity. This extends only the ordinary form $\beta$, not the vector $U$.
\end{proof}

\vspace{0.3cm}

\noindent {\bf 2). A line bundle of negative weighted curvature.}

\vspace{0.2cm}

Let $\mathscr A$ be the smooth trivial line equipped with the barred operator
\begin{equation}\label{eq:shift-line}
\bp_{\mathscr A}=\bp-\bar\vartheta/2.
\end{equation}
It is integrable because $\partial\vartheta=0$. To describe its local holomorphic frames, solve $\bp g=\bar\vartheta/2$ on a small polydisk and take $e^g$ times the global smooth frame. Such a solution follows from successive one-variable Cauchy--Green operators. The frames can be chosen real analytic: complexify the analytic closed barred form and use the radial Poincar\'e integral in the independent barred variables, then restrict to the conjugate diagonal. Give the global smooth frame norm one. Its Chern connection and real curvature are
\begin{equation}\label{eq:shift-curvature}
\nabla^{\mathscr A}=d+(\vartheta-\bar\vartheta)/2,
\qquad \rho_{\mathscr A}=\frac{\sqrt{-1}}2(\bp\vartheta-\partial\bar\vartheta).
\end{equation}

\begin{lemma}\label{lem:negative-line}
Let $v$ and $\Omega=v\omega^2/2$ be as in Lemma~\ref{lem:adjoint-density}. Then
\[
\int_M\rho_{\mathscr A}\wedge\Omega<0,
\qquad H^0(M,\mathscr A^2)=0.
\]
\end{lemma}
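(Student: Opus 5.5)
The plan is to reduce the integral to the weighted trace identity \eqref{eq:weighted-trace} and the bound $|\beta|^2\le 2t$ in \eqref{eq:singular-beta}, together with the scalar gap \eqref{eq:scalar-gap}. First compute the integrand pointwise. Since $\partial\vartheta=0$, the form $\rho_{\mathscr A}$ is of type $(1,1)$ and equals $\frac{\sqrt{-1}}2(\bp\vartheta-\partial\bar\vartheta)$. Wedging a real $(1,1)$-form $\sqrt{-1}\gamma_{i\bar j}\varphi_i\wedge\bar\varphi_j$ with $\omega^2/2$ gives $(\tr\gamma)\,dV$. Write $\vartheta=\eta+\beta$. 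In holomorphic coordinates the trace of $\bp\eta$ is $\chi$, as in the proof of \eqref{eq:stokes-chi}, and by reality the conjugate term contributes the same. Hence the integrand becomes $\bigl(\chi+\re\,\tr_\omega(\bp\beta)\bigr)v\,dV$ up to the sign convention, which I would fix so that $\rho_{\mathscr A}$ reproduces $-\Phi$-type terms. Concretely, I expect $\rho_{\mathscr A}\wedge\omega^2/2=-\bigl(\chi+\re\sum_i\beta_{i,\bar i}\bigr)dV$.

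Next integrate against $v$, using \eqref{eq:Omega-weight}. For the $\beta$ term, Stokes with $\partial\bp\Omega=0$ and $\partial\Omega=-\eta_f\wedge\Omega$ gives
\[
\int_M v\,\tr(\bp\beta)=\int_M v\,\ip{\beta}{\eta_f},
\]
the weighted analogue of \eqref{eq:stokes-chi}. The $\eta$ term gives $\int v\chi=\int v|\eta_f|^2$ by \eqref{eq:weighted-trace}. Because $\vartheta$ is $\partial$-closed, the same weighted Stokes argument applied to $\vartheta$ directly should yield
\[
-\int_M\rho_{\mathscr A}\wedge\Omega=\int_M v\,\re\ip{\eta_f+\beta}{\eta_f}.
\]
This is $\int v\bigl(|\eta_f+\beta/2|^2-|\beta|^2/4\bigr)$, so it is not obviously positive. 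To get positivity I would use the pointwise information $\chi\ge 2t\ge|\beta|^2$ from \eqref{eq:scalar-gap} and \eqref{eq:singular-beta}, applicable since $S\not\equiv0$ by Lemma~\ref{lem:cubic-dense}. Thus I would write the total as $\int v\chi+\re\int v\ip{\beta}{\eta_f}$ and bound the cross term by Cauchy--Schwarz:
\[
\Bigl|\int v\ip{\beta}{\eta_f}\Bigr|\le\Bigl(\int v|\beta|^2\Bigr)^{1/2}\Bigl(\int v|\eta_f|^2\Bigr)^{1/2}\le\Bigl(\int v\chi\Bigr)^{1/2}\Bigl(\int v\chi\Bigr)^{1/2}.
\]
This gives $\ge0$, and strictness must come from the slack. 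Since $|\beta|^2\le2t\le\chi$, equality would force $\chi=2t$ a.e., i.e.\ $\psi\equiv0$. Then \eqref{eq:scalar} gives $S\equiv0$, contradicting Lemma~\ref{lem:cubic-dense}. So the integral is strictly negative. The main obstacle is the sign and constant bookkeeping in the first step; if the constants come out less favourably, I would additionally use $|\beta|^2\le2t=\chi-\psi$ with $\int v\psi>0$.

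For the vanishing $H^0(M,\mathscr A^2)=0$, take a holomorphic section $\sigma$ and put $u=|\sigma|^2$. The Chern product rule gives $\sqrt{-1}\partial\bp\log u\ge-2\rho_{\mathscr A}$ off the zeros, and hence, distributionally, $\sqrt{-1}\partial\bp\log u=-2\rho_{\mathscr A}+[\text{zero divisor}]$. Wedge with $\Omega$ and integrate. The left side vanishes by $\partial\bp\Omega=0$, which I would justify by approximating $\log u$ with $\log(u+\epsilon)$. This leaves $0\ge-2\int\rho_{\mathscr A}\wedge\Omega>0$ unless $\sigma\equiv0$, a contradiction. This is the same argument as \eqref{eq:det-section-bound}.
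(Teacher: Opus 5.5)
Your proposal is correct and follows essentially the paper's argument. The paper also reduces $\int_M\rho_{\mathscr A}\wedge\Omega$ to $-\int_Mv\,\re(\eta_f,\eta_f+\beta)\,dV$ by weighted Stokes with \eqref{eq:Omega-weight}, and then combines \eqref{eq:weighted-trace}, $|\beta|^2\le 2t\le\chi$ and Cauchy--Schwarz; in place of your equality-case analysis it gets the direct bound $\int_M\rho_{\mathscr A}\wedge\Omega\le-\tfrac12\int_Mv(\chi-|\beta|^2)\,dV\le-\tfrac12\int_Mv\psi\,dV<0$. For $H^0(M,\mathscr A^2)=0$ it uses exactly your $\log(u+\epsilon)$ regularization integrated against $v$, with dominated convergence across the measure-zero zero set, rather than invoking Poincar\'e--Lelong.
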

\begin{proof}
Put $f=\frac12\log v$ and $\eta_f=\eta-2\partial f$. By Lemma~\ref{lem:cubic-dense}, $S$ is nonzero somewhere. Thus $\psi=\chi-2t\geq0$ by Proposition~\ref{prop:compact-estimate}; moreover $\psi\not\equiv0$ by \eqref{eq:scalar}. Define
\[
A_0=\int_Mv|\eta_f|^2\,dV,\qquad B_0=\int_Mv|\beta|^2\,dV.
\]
Equations~\eqref{eq:weighted-trace} and \eqref{eq:singular-beta} give
\begin{equation}\label{eq:weighted-strict-gap}
A_0-B_0=\int_Mv(\chi-|\beta|^2)\,dV\geq\int_Mv\psi\,dV>0.
\end{equation}

Set $I=\int_M\bar\vartheta\wedge\partial\Omega$. Stokes' theorem gives $\int_M\partial\bar\vartheta\wedge\Omega=I$ and $\int_M\bp\vartheta\wedge\Omega=\bar I$. Hence $\int_M\rho_{\mathscr A}\wedge\Omega=-\re(\sqrt{-1}I)$. Replacing $\vartheta$ by $\vartheta_f=\vartheta-2\partial f$ leaves $I$ unchanged, since $\partial\bp\Omega=0$. Using $\partial\Omega=-\eta_f\wedge\Omega$ and the unitary identity
\[
\sqrt{-1}\,\xi\wedge\bar\zeta\wedge\omega^2/2=(\xi,\zeta)dV,
\]
we get
\[
\re(\sqrt{-1}I)=\int_Mv\re(\eta_f,\vartheta_f)\,dV
=A_0+\re\int_Mv(\eta_f,\beta)\,dV.
\]
Cauchy--Schwarz and \eqref{eq:weighted-strict-gap} give
\begin{equation}\label{eq:negative-degree}
\int_M\rho_{\mathscr A}\wedge\Omega
\leq-A_0+\sqrt{A_0B_0}
\leq-\tfrac12(A_0-B_0)<0.
\end{equation}

We also give the section argument, including its zeros. For any Hermitian holomorphic line bundle $F$ with real curvature $\rho_F$, let $k_F$ be its mean curvature. If $s$ is a nonzero holomorphic section and $u=|s|^2$, the Chern product rule gives
\[
\Delta u=|\nabla's|^2-k_Fu,\qquad
|\partial u|^2=u|\nabla's|^2.
\]
For each $\epsilon>0$,
\[
\Delta\log(u+\epsilon)
=\frac{\epsilon|\nabla's|^2}{(u+\epsilon)^2}-\frac{k_Fu}{u+\epsilon}.
\]
Integrate against $v$ and use \eqref{eq:adjoint-density}. It follows that $\int_Mvk_Fu/(u+\epsilon)\,dV\geq0$. The zero set of a nonzero holomorphic section has measure zero: this follows locally by slicing its holomorphic coefficient in a coordinate with a first nonzero Taylor derivative, and applying induction and Fubini to the exceptional slices. Dominated convergence therefore gives $\int_Mvk_F\,dV\geq0$, or equivalently $\int_M\rho_F\wedge\Omega\geq0$. Applying this to $F=\mathscr A^2$ contradicts \eqref{eq:negative-degree}. Thus $H^0(M,\mathscr A^2)=0$.
\end{proof}

\par
\Needspace{6\baselineskip}
\vspace{0.3cm}
\noindent\textbf{3). The holomorphic quotient and the restricted quadratic form.}\par
\nopagebreak[4]
\vspace{0.1cm}
\nopagebreak[4]

\begin{proposition}\label{prop:singular-conic}
There is a holomorphic line bundle $\mathscr Q$ and a holomorphic map $\alpha_0:E\longrightarrow\mathscr Q$ whose zero set $\Sigma$ has complex codimension at least two and is disjoint from $O_*$. On $M\setminus\Sigma$, its kernel is the holomorphic plane $H$ extending the plane in Lemma~\ref{lem:singular-factor}. The sections
\begin{equation}\label{eq:global-skew-sections}
\mathfrak a=\alpha_0\wedge(\alpha_0\circ T),\qquad
\mathfrak l=-\alpha_0\wedge\partial\alpha_0,\qquad
\mathfrak k=(\mathfrak a+\mathfrak l)/2
\end{equation}
of $\mathscr Q^2\otimes\KM$ are globally real analytic, and $\mathfrak l$ is holomorphic. They represent, respectively, the normal torsion, the Levi bracket, and the skew second fundamental tensor on $H$.

The restriction of $Q^c=\bar Q$ to $\Sym^2H$ is a holomorphic binary quadratic form with values in
\[
\mathscr J=\mathscr Q^{-1}\otimes\KM^{-1}\otimes\mathscr A
\]
under the induced output identification, and its determinant is zero. Consequently, in the unitary frame of \eqref{eq:singular-unitary-B}, the three quadratic rows of $Q$ have the form
\begin{equation}\label{eq:singular-Q-rows}
Q_1=\begin{pmatrix}u_{11}&u_{12}&u_{13}\\u_{12}&u_{22}&u_{23}\\u_{13}&u_{23}&u_{33}\end{pmatrix},\quad
Q_2=\begin{pmatrix}v_{11}&v_{12}&v_{13}\\v_{12}&0&0\\v_{13}&0&0\end{pmatrix},\quad Q_3=0,
\qquad u_{22}u_{33}=u_{23}^2.
\end{equation}
The coefficient $u_{23}$ is invariant under unit phase changes of the three adapted vectors, and its nonzero set is open and dense in $O_*$.
\end{proposition}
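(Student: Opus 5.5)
The plan is to build $\alpha_0$ first on $O_*$ from the line $L$ of Lemma~\ref{lem:singular-factor}. Then we extend it globally by analytic continuation and a Hartogs-type removal argument. Finally we read off the quadratic form from the transport equation for $Q$.

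\emph{Step 1: the quotient map $\alpha_0$.} On $O_*$, the plane $H=L^\perp$ is a holomorphic subbundle, so $E/H$ is a holomorphic line bundle and the projection $E\to E/H$ is holomorphic. To globalize, we use the global real analytic data. The adjugate $\adj\B=-\beta\otimes\beta/4$ and $w$ are both defined on all of $M$. On $O_*$ the line $L$ is spanned by $w$, so $H=\ker(\cdot,w)$, and $\bar w$ gives the conjugate description. The cleanest route is to use the holomorphic structure directly. The line $L\subset E$ is $\nabla'$-invariant, so the dual quotient $(E/H)^*\cong L$ carries an induced holomorphic structure. We then check by direct computation with \eqref{eq:shift-line} that the section $w$, twisted by $\KM$ and the shift $\mathscr A$, is a holomorphic section of $E\otimes\KM\otimes\mathscr A^{-1}$ up to conjugation. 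This is where $\partial\vartheta=0$ from Lemma~\ref{lem:singular-factor} enters. Concretely, $\nabla_{\bar X}$ of the conjugate covector $(\cdot,w)$ is proportional to itself with coefficient $\bar\vartheta/2$, which follows from \eqref{eq:flag-transport} together with $\tau(X)=\beta(X)/2$-type identities on $L$.

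Thus $\alpha_0:=(\cdot,w)$, suitably normalized, is a global holomorphic map $E\to\mathscr Q$, where $\mathscr Q$ is the appropriate tensor of $\bar\KM$-dual with $\mathscr A$. Its zero set is the analytic set $\{w=0\}$. Since $w\ne0$ on $O_*$ and $W$ has no open zero set (Proposition~\ref{prop:zero-curl}), this set is a proper analytic subvariety. We then saturate: divide $\alpha_0$ by the divisorial part of its zero locus, i.e.\ replace $\mathscr Q$ by $\mathscr Q\otimes\mathcal O(-D)$. This makes $\Sigma$ of codimension at least two. The kernel of the saturated map is then a reflexive rank-two subsheaf, locally free off $\Sigma$, which extends $H$.

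\emph{Step 2: the sections $\mathfrak a$, $\mathfrak l$, $\mathfrak k$.} Holomorphy of $\mathfrak l=-\alpha_0\wedge\partial\alpha_0$ is the standard fact that $\alpha\wedge d\alpha$ is well defined and holomorphic for a holomorphic twisted one-form. Since $\alpha_0$ is holomorphic, only the $\partial$ part appears. Real analyticity of $\mathfrak a$ follows from Lemma~\ref{lem:analytic}. The interpretations are then obtained on $O_*$ from \eqref{eq:contact-bracket}, \eqref{eq:H-skew}, and \eqref{eq:normal-torsion} in the adapted frame \eqref{eq:singular-unitary-B}.

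\emph{Step 3: the quadratic form on $H$.} By \eqref{eq:Q-transport}, $\nabla''Q^c=Q^c\,\overline{C}$ acts only on outputs. Restricting to $\Sym^2H$ and composing with the projection of the output determined by $\alpha_0$ yields a $\bp$-closed section for the shifted structure. For this, the $\eta$- and $\B$-terms of $C$ restricted to the relevant output line must combine into $\bar\vartheta/2$, using $T_0(X,u)=\beta(X)u/2$. This gives holomorphy with values in $\mathscr J$.

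The vanishing of the determinant is the main obstacle. The discriminant is a holomorphic section of $\mathscr J^2\otimes(\det H^*)^2$. Using $\det H\cong\mathscr Q^{-1}\otimes\KM^{-1}$, which follows from $0\to H\to E\to\mathscr Q\to0$ off a codimension-two set, this bundle is exactly $\mathscr A^2$. Hartogs extension across $\Sigma$ applies, and Lemma~\ref{lem:negative-line} gives $H^0(M,\mathscr A^2)=0$. So the discriminant vanishes. The delicate point is getting the output identification and the bundle bookkeeping right so that the twists cancel to exactly $\mathscr A^2$.

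\emph{Step 4: the row form.} The form \eqref{eq:singular-Q-rows} follows in the frame \eqref{eq:singular-unitary-B}. We have $Q_3=0$ since the image lies in $P$. The vanishing of the $22,23,33$ entries of $Q_2$ comes from the $H$-skewness \eqref{eq:H-skew} combined with \eqref{eq:Q-decomp}. The determinant condition becomes $u_{22}u_{33}=u_{23}^2$. Phase invariance of $u_{23}$ is a direct check. Its density holds because $u_{23}\equiv0$ on an open set would make the form vanish on $\Sym^2H$. Combined with Lemma~\ref{lem:block-bound}, this would force $r_G\le3|S|^2$ there and contradict \eqref{eq:compact-estimate} on the set $V_{\rm ex}$, as in Proposition~\ref{prop:zero-curl}.
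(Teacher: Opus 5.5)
There is a genuine gap in Step~1, and Step~1 is the heart of the proposition. Since $\nabla_{\bar X}(\cdot,w)=(\cdot,\nabla_Xw)$, the coefficient in your recurrence is $\bar\lambda$, where $\nabla'w=\lambda\otimes w$ is the Chern recurrence of $w$ along the preserved line $L$. It is not $\bar\vartheta/2$. Indeed, \eqref{eq:flag-transport} gives $\lambda(X)=(\eta+\tau)(X)$ for $X\in P$. In the frame \eqref{eq:singular-unitary-B} one has $\beta|_P=-2\tau|_P$, so $\vartheta/2=\eta/2-\tau$ on $P$, and the two do not agree.

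The form $\vartheta/2$ describes how $C$ acts on $L$. It corrects the twisted operator $\nabla''-\bar C$ on the output side of $Q$, which is why $\mathscr A$ enters $\mathscr J$; it is not the Chern derivative of $w$. The true $\lambda$ is defined only on $O_*$, and nothing makes it a global smooth form; its normal part later turns out to involve $\zeta=(s_0/b)\varphi_3$. A priori $(\cdot,w)$ is only a real analytic multiple of a holomorphic covector, and the multiplier's zeros need not be complex analytic. So ``suitably normalized'', ``its zero set is the analytic set $\{w=0\}$'', and the saturation all presuppose a global holomorphic section that you have not produced.

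The missing idea is meromorphic descent. Write $w=u\otimes\nu_0$ and $a_0=(\cdot,u)$.
\begin{itemize}
\item The identity $(a_0)_i\bp(a_0)_j-(a_0)_j\bp(a_0)_i=0$ holds on $O_*$, because $H$ is holomorphic there, hence on every chart by analyticity.
\item Complexifying shows that each coefficient ratio is a meromorphic germ.
\item Clearing denominators and common irreducible factors in the factorial ring of holomorphic germs gives primitive representatives, unique up to units.
\item These glue to $\alpha_0\colon E\to\mathscr Q$, with zero set of codimension at least two and disjoint from $O_*$.
\end{itemize}

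Steps~3 and~4 also need repair.

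In Step~3, projecting the output onto the $\mathcal D''$-invariant line $\operatorname{Im}j$ does not commute with $\mathcal D''=\nabla''-\bar C$. You must first prove $Q^c(\Sym^2H)\subset\operatorname{Im}j$, which is exactly the vanishing of the $22,23,33$ entries of $Q_2$ that you postpone to Step~4. This is not an algebraic consequence of \eqref{eq:H-skew} and \eqref{eq:Q-decomp}. Instead, differentiate the skew second fundamental form $A(X,Y)=\alpha_0(\nabla_XY)$ of $H$ in a barred direction to get $\alpha_0(R(X,\bar Z)Y)=-(D_{\bar Z}A)(X,Y)$. Then symmetrize in $X,Y$ and use curvature reality. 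With that in place, your Hartogs extension of $\det C_H$ from $M\setminus\Sigma$ is a valid and simpler substitute for the paper's meromorphic argument. This works because $\mathscr A^2$ is a global holomorphic line bundle and $\Sigma$ is analytic of codimension at least two.

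In Step~4, the density argument has two problems. First, $u_{23}=0$ together with $u_{22}u_{33}=u_{23}^2$ kills only one of $u_{22},u_{33}$, so the restricted form need not vanish. When $u_{33}\neq0$, neither block form of Lemma~\ref{lem:block-bound} is guaranteed by \eqref{eq:singular-G}, so the bound must come from a direct computation such as \eqref{eq:middle-square}. Second, and more seriously, a bound on one open patch need not meet $V_{\mathrm{ex}}$. The coefficient $u_{23}$ is built from normalized frame data, and $O_*$ need not be connected. You therefore need a global analytic detector, for example $E_G=-\frac12\tr(Z\widehat G)$, which equals $\sigma|w|^2u_{23}$ on $O_*$. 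This propagates $u_{23}=0$ to all of $O_*$ before you invoke \eqref{eq:compact-estimate}.
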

\begin{proof}
\step{The primitive holomorphic quotient.}
On a small coordinate ball write $w=u\otimes\nu_0$ and $a_0=(\cdot,u)$. This is an analytic covector, whose coefficient ratios are holomorphic on $O_*$ because $\ker a_0=H$ is holomorphic there. Hence
\[
(a_0)_i\partial_{\bar z}(a_0)_j-(a_0)_j\partial_{\bar z}(a_0)_i=0
\]
throughout the ball by analyticity. Choose a coefficient not equal to the zero germ. Complexifying a numerator and denominator gives holomorphic functions $P(z,\zeta),C(z,\zeta)$ satisfying $C\partial_\zeta P-P\partial_\zeta C=0$. Choose $\zeta_0$ such that $C(z,\zeta_0)\not\equiv0$. The quotient is independent of $\zeta$ where defined, so
\[
P(z,\zeta)C(z,\zeta_0)=P(z,\zeta_0)C(z,\zeta).
\]
The holomorphic identity principle extends this to the product polydisk. Restricting to $\zeta=\bar z$ shows that the original ratio is the meromorphic germ $P(z,\zeta_0)/C(z,\zeta_0)$. Thus $\Ann(H)$ defines a meromorphic line in $E^*$.

The local ring of holomorphic germs on a smooth complex chart is factorial, and a pure codimension-one analytic germ has a principal vanishing ideal; see \cite[Theorems 6.4.2 and 6.6.1]{Lebl}. Clear the denominators of a representative of the meromorphic line and remove all common irreducible factors. Its coefficients are then holomorphic and have no common divisorial component, so their common zero set has codimension at least two. Two such primitive representatives differ by a holomorphic unit: the intervening meromorphic scalar has valuation zero at every prime divisor, and in a reduced quotient neither numerator nor denominator can have a nonunit factor. The representatives therefore glue as a holomorphic line subsheaf of $E^*$. Its dual is a line bundle $\mathscr Q$, and its dual map is $\alpha_0:E\longrightarrow\mathscr Q$.

Let $\Sigma=\{\alpha_0=0\}$. On $O_*$, the annihilator has a local nowhere-zero holomorphic representative. A primitive representative differs from it by a holomorphic scalar, and a zero of that scalar would be a common divisorial factor. Thus $\Sigma\cap O_*=\varnothing$. On $M\setminus\Sigma$, the kernel of $\alpha_0$ is a holomorphic plane agreeing with $H$ on $O_*$. Its orthogonal line and plane are smooth and analytic. Preservation and skewness extend there by density.

Under a holomorphic change of frame of $\mathscr Q$, the coefficients of $\alpha_0\wedge\partial\alpha_0$ transform as a section of $\mathscr Q^2\otimes\KM$: the extra differentiated term contains $\alpha_0\wedge\alpha_0=0$. This proves the assertions about \eqref{eq:global-skew-sections}. On $H$ inputs $X,Y$, divide their evaluation on $(Z,X,Y)$ by $\alpha_0(Z)$ for any transverse $Z$. The resulting values are $\alpha_0(T(X,Y))$, $\alpha_0([X,Y])$, and $\alpha_0(\nabla_XY)$, respectively. The last uses \eqref{eq:H-skew} and the torsion identity.

\step{The actual output line.}
Put $V_{\mathrm{out}}=\overline{E\otimes\KM}$. The metric gives parallel complex-linear maps $E^*\to\bar E$ and $\KM^{-1}\to\bar\KM$. Apply them to the holomorphic dual map $\alpha_0^\vee:\mathscr Q^{-1}\to E^*$ to obtain
\[
j_0:\mathscr Q^{-1}\otimes\KM^{-1}\longrightarrow V_{\mathrm{out}}.
\]
Its zero set is exactly $\Sigma$, and off $\Sigma$ its image is $\overline{L\otimes\KM}$. It intertwines the original barred Chern operators. Define
\[
\mathcal D''=\nabla''_{V_{\mathrm{out}}}-\bar C.
\]
By the proof of Lemma~\ref{lem:singular-factor}, $C$ acts on $L$ as $\vartheta/2$. Tensoring $j_0$ with the global unit smooth frame of $\mathscr A$ gives a map $j:\mathscr J\to V_{\mathrm{out}}$ satisfying
\[
\mathcal D''j=j\bp_{\mathscr J}.
\]
This holds first off $\Sigma$ and then everywhere by smoothness. No integrability of $\mathcal D''$ on the whole output bundle is needed.

We claim that $Q^c(\Sym^2H)$ takes values in $\operatorname{Im}j$. The second fundamental tensor $A(X,Y)=\alpha_0(\nabla_XY)$ for $X,Y\in H$ is skew. The mixed curvature formula gives
\[
\alpha_0(R(X,\bar Z)Y)=-(D_{\bar Z}A)(X,Y).
\]
To check the derivative slots, the $(1,0)$ component of $[X,\bar Z]$ is $-\nabla_{\bar Z}X$, since the mixed torsion is zero; it supplies exactly the derivative of the first input of $A$. The quotient connection supplies the output derivative. Thus
\[
\alpha_0(R(X,\bar Z)Y)+\alpha_0(R(Y,\bar Z)X)=0.
\]
By curvature reality, for a unit section $l$ of $L$ this is
\[
R(Z,\bar X,l,\bar Y)+R(Z,\bar Y,l,\bar X)=0.
\]
The barred-symmetric curvature is alternating in its unbarred pair. Its associated two-form therefore vanishes whenever one unbarred input is $l$, so its alternating-symbol image lies in $L\otimes\KM$. This proves the claim.

There is now a unique symmetric form $C_H$ on $H$ with values in $\mathscr J$ such that $Q^c|_{\Sym^2H}=jC_H$. Conjugating \eqref{eq:Q-transport} gives $\mathcal D''Q^c=0$, with the ordinary Dolbeault operator on the holomorphic source. Since $H$ is holomorphic and $j$ intertwines, $\bp C_H=0$ off $\Sigma$. The exact sequence $0\to H\to E\to\mathscr Q\to0$ there gives the holomorphic line isomorphism
\begin{equation}\label{eq:conic-det-line}
(\det H^*)^2\otimes\mathscr J^2
=(\KM\otimes\mathscr Q)^2\otimes
(\mathscr Q^{-1}\otimes\KM^{-1}\otimes\mathscr A)^2
\simeq\mathscr A^2.
\end{equation}

\step{Extension of the determinant.}
In holomorphic frames write $\alpha_0=(a_i)$. Where $a_i\ne0$, the vectors $E_j-(a_j/a_i)E_i$, $j\ne i$, frame $H$. Evaluating $Q^c$ on these vectors and dividing by a nonzero component of $j$ expresses the coefficients of $C_H$ as quotients of real analytic functions. Use the real analytic holomorphic frames of $\mathscr A$ described after \eqref{eq:shift-line}. Then \eqref{eq:conic-det-line} expresses $\det C_H$ as a real analytic quotient in a holomorphic frame of $\mathscr A^2$. All denominators can be chosen as nonzero germs. The coefficient is holomorphic off $\Sigma$, so the same complexification argument used for the annihilator expresses it as a meromorphic holomorphic germ across $\Sigma$.

Such a germ has no pole. Indeed, in a reduced quotient in the factorial local ring, any irreducible denominator factor defines a hypersurface with a dense open part outside the codimension-at-least-two set $\Sigma$. Holomorphicity there makes the numerator vanish on that hypersurface. The principal vanishing-ideal statement then makes the same factor divide the numerator, a contradiction. Thus the denominator is a unit. The local extensions glue to a global holomorphic section of $\mathscr A^2$, which is zero by Lemma~\ref{lem:negative-line}. Hence $\det C_H=0$.

\step{The adapted rows and the nonzero middle coefficient.}
On $O_*$, $H=\Span\{e_2,e_3\}$. The image of $Q$ lies in $P\otimes\KM$, so $Q_3=0$. The restricted output line makes the $22,23,33$ entries of $Q_2$ zero. The determinant just proved gives $u_{22}u_{33}=u_{23}^2$. This is \eqref{eq:singular-Q-rows}.

For use below, compute from \eqref{eq:Q-decomp} that
\begin{equation}\label{eq:singular-G}
G=2\begin{pmatrix}
v_{13}&0&0\\-u_{13}&-u_{23}&-u_{33}\\
u_{12}-v_{11}&u_{22}-v_{12}&u_{23}-v_{13}
\end{pmatrix}.
\end{equation}
The nonzero independent cubic entries can only be
\[
\begin{gathered}
S_{111}=-2u_{11},\quad S_{112}=-2(2u_{12}+v_{11})/3,\quad S_{113}=-4u_{13}/3,\\
S_{122}=-2(u_{22}+2v_{12})/3,\quad S_{123}=-2(u_{23}+v_{13})/3,\quad S_{133}=-2u_{33}/3.
\end{gathered}
\]
Thus
\begin{align}
6|S|^2={}&24|u_{11}|^2+8|2u_{12}+v_{11}|^2+32|u_{13}|^2\notag\\
&+8|u_{22}+2v_{12}|^2+16|u_{23}+v_{13}|^2+8|u_{33}|^2,\label{eq:singular-S-norm}\\
\tr G^2={}&8(v_{13}^2-u_{23}v_{13}+u_{33}v_{12}).\label{eq:singular-trG}
\end{align}
Put $a_0=\re u_{23}$ and $b_0=\im u_{23}$. Completing squares, using $u_{22}u_{33}=u_{23}^2$, gives
\begin{align}
r_G-6|S|^2={}&30a_0^2-6b_0^2-\tfrac{15}{2}|u_{33}|^2
-24|u_{11}|^2-8|2u_{12}+v_{11}|^2-32|u_{13}|^2\notag\\
&-32|v_{12}+u_{22}/2-\bar u_{33}/8|^2
-8(\re v_{13}+5a_0/2)^2\notag\\
&-24(\im v_{13}+b_0/2)^2.
\label{eq:middle-square}
\end{align}
For example, the $v_{13}$ completion leaves $34a_0^2-10b_0^2$, and the $v_{12}$ completion leaves $|u_{33}|^2/2-4\re(u_{22}u_{33})$; together these give the first three terms displayed.

Under unit phases $e_i\mapsto z_ie_i$ and the matching canonical frame, $Q_{alj}$ acquires the factor $z_1z_2z_3/(z_az_lz_j)$. Thus $u_{23}=Q_{123}$ is phase invariant. To continue only its zero condition, use \eqref{eq:global-detectors} and put
\[
A_w=ww^*,\qquad Z=\sigma|w|^2I-|w|^2J_Q-\sigma A_w.
\]
On $O_*$, $Z=\sigma|w|^2\pi_{\Span(e_2)}$. If $\widehat G$ is the endomorphism corresponding to the $(1,1)$-tensor $G$, then the global analytic scalar
\[
E_G=-\tfrac12\tr(Z\widehat G)
\]
equals $\sigma|w|^2u_{23}$ on $O_*$. If $u_{23}$ vanished on an open patch, analytic continuation would make $E_G=0$ globally and hence $u_{23}=0$ throughout $O_*$. Equation~\eqref{eq:middle-square} would give $r_G\leq6|S|^2$ on $O_*$, and then on $M$ by continuity. Multiplication by $t$ and \eqref{eq:compact-estimate} would contradict $\int_Mt^3>0$. Thus $u_{23}$ has no nonempty open zero set.
\end{proof}

\par
\Needspace{6\baselineskip}
\vspace{0.3cm}
\noindent\textbf{4). The full recurrence on the regular set.}\par
\nopagebreak[4]
\vspace{0.1cm}
\nopagebreak[4]

\begin{proposition}\label{prop:singular-recurrence}
On $O_*$ there is an intrinsic one-form $\zeta$, characterized by
\[
w=q\B(\alpha_{\rm im},\cdot),\qquad \zeta=q\alpha_{\rm im},
\]
such that
\begin{equation}\label{eq:singular-B-recurrence}
\partial\zeta=W,\qquad
\nabla_X\B=(\eta(X)/2-3\zeta(X)/2)\B+w\odot X.
\end{equation}
There are unique smooth ordinary vectors $U,V$ and a scalar $\nu_s$ with
\begin{equation}\label{eq:singular-factors}
\B=w\odot U,\qquad S=\B\odot V,\qquad \Sym_2\nabla''w=\nu_s\B.
\end{equation}
Put $\xi=-\iota_UW/2$, $a=\zeta/2-\xi$, and $r=\eta/2+\xi$. Then
\begin{equation}\label{eq:singular-basic-recurrences}
\begin{gathered}
\zeta(U)=2,\qquad\xi(U)=0,\qquad \partial\xi=W/2,\qquad\partial a=0,\\
\nabla_Xw=(\eta-3\zeta/2+\xi)(X)w,\qquad
\nabla_XU=X-r(X)U,\\
(\nabla_{\bar j}\zeta)_p+\zeta_pV^j=(G^*)_{pj}/3+2\nu_s\delta_{pj}/3.
\end{gathered}
\end{equation}
There is a unique smooth scalar $\gamma$ such that, for $Y=V+\gamma U$,
\begin{equation}\label{eq:singular-VY}
\nabla_XV=a(X)V+\gamma(\zeta(X)U-X),\qquad
\partial\gamma=\tfrac12\gamma(\eta-\zeta),\qquad \nabla_XY=a(X)Y.
\end{equation}
The vectors $U,V$ are independent at every point, and $w\wedge U\wedge Y$ is nonzero on an open dense subset of $O_*$. In the frame of \eqref{eq:singular-unitary-B}, if
\[
\ell(y)=A_0y_1+2Dy_2+2by_3,
\]
then the quadratic rows are
\begin{equation}\label{eq:singular-row-factor}
Q_3=0,\qquad Q_2(y)=y_1M(y),\qquad
M(y)=\frac{s_0}{2b}(\gamma U(y)-V(y)),\qquad
Q_1(y)=-\frac12\ell(y)V(y)-y_2M(y).
\end{equation}
\end{proposition}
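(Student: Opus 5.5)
The plan is to mirror the first two steps of the proof of Proposition~\ref{prop:normalization}, working throughout in the adapted unitary frame \eqref{eq:singular-unitary-B} on $O_*$, where $w=s_0e_1\otimes\nu_0$, $\B_{22}=\B_{23}=\B_{33}=0$ and $bs_0\ne0$.

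\textbf{Step 1: the recurrence for $\B$.} Alignment (Lemma~\ref{lem:alignment}) gives $w=q\B(\alpha_{\rm im},\cdot)$ with $q=s_0/b$. Under a unit phase change of $n$, $q$ and $\alpha_{\rm im}$ pick up opposite phases, so $\zeta=q\alpha_{\rm im}$ is intrinsic. As in \eqref{eq:beta-full}, the normal equation \eqref{eq:normal-equation} together with the transport of $q$ along $P$ gives $\partial\zeta=q\tau\wedge\alpha_{\rm im}=W$. For the full derivative of $\B$, I combine \eqref{eq:plane-B} with the normal component \eqref{eq:singular-normal-N}. Writing $\Nt_3=w\odot n+\alpha$-part, the difference $C_0/q+3\B/2$ is a symmetric tensor killed by $\zeta$. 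It lies in the span of the entries $r_0,v_0$. I expect to show it vanishes by inserting it into the mixed equation \eqref{eq:universal-mixed}, whose divisibility is now by $y_1$ rather than by an irreducible $b$. This yields \eqref{eq:singular-B-recurrence}.

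\textbf{Step 2: the factorizations \eqref{eq:singular-factors}.}
\begin{itemize}
\item The identity $\B=w\odot U$ is Lemma~\ref{lem:singular-factor}.
\item The factorization $S=\B\odot V$ comes from \eqref{eq:N-mixed} written on diagonal variables: its left side is divisible by $b(y)=y_1\ell(y)$, once the recurrence is substituted.
\item The identity $\Sym_2\nabla''w=\nu_s\B$ follows from the same polynomial identity after cancelling $V$.
\end{itemize}
Uniqueness follows from injectivity of multiplication by a nonzero polynomial, as in the proof of Proposition~\ref{prop:zero-curl}.

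\textbf{Step 3: the basic recurrences \eqref{eq:singular-basic-recurrences}.}
\begin{itemize}
\item The identities $\zeta(U)=2$ and $\xi(U)=0$ are direct computations in the frame, using $\zeta=q\varphi_3$ and $U=(A_0e_1+2De_2+2be_3)/s_0$.
\item For $\partial\xi=W/2$, note that $\xi=-\beta/2$ with $\beta$ from \eqref{eq:singular-beta}, and $\partial\beta=-W$.
\item Then $\partial a=\partial\zeta/2-\partial\xi=0$.
\item Differentiating $w=\B(\zeta,\cdot)$ with the recurrence gives $\nabla_Xw$, with the coefficient $\xi$ entering through $\B(\nabla_X\zeta)$ via \eqref{eq:normal-equation}.
\item Then $\nabla_XU$ follows by differentiating $\B=w\odot U$ and cancelling $w$.
\item The $\bar\partial$-equation for $\zeta$ is the leftover coefficient in the mixed equation, exactly as \eqref{eq:Wzero-nu}.
\end{itemize}

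\textbf{Step 4: the derivative of $V$.} Differentiate $S=\B\odot V$ using \eqref{eq:S-transport}, and express $Q$ through \eqref{eq:Q-decomp}. Comparing with the recurrences for $\B$ forces $\nabla_XV-a(X)V$ to lie in $\Span\{U,X\}$ with a single coefficient $\gamma$. Pure-curvature compatibility (zero $(2,0)$ curvature) of $\nabla_XV$ then gives $\partial\gamma$, and the formula $\nabla_XY=a(X)Y$ follows from the previous two lines.

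\textbf{Step 5: the row factorization \eqref{eq:singular-row-factor}.} This is pointwise linear algebra. Substitute \eqref{eq:singular-Q-rows} into $S=-2\Sym_3Q$ and into the $(0,1)$-equation for $\zeta$, where $G$ is given by \eqref{eq:singular-G}, and solve for the $u_{ij}$ and $v_{ij}$.

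\textbf{Step 6: independence.} Independence of $U$ and $V$ follows because $Q_2=y_1M$ would otherwise collapse the rank of $Q$ below two. The open density of $\{w\wedge U\wedge Y\ne0\}$ uses the density of $\{u_{23}\ne0\}$ from Proposition~\ref{prop:singular-conic}. Indeed, $u_{23}$ is read off from the $e_3$-component of $Y$ through \eqref{eq:singular-row-factor}.

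\textbf{Main obstacle.} I expect the hardest step to be Step~4, together with the vanishing of the normal remainder in Step~1. There the degenerate $b=y_1\ell$ replaces the unique-factorization argument of \eqref{eq:mixed-syzygy}, so the coefficients must be compared by hand in the frame. I would first try the $y_1=0$ restriction, as in the proof of \eqref{eq:H-skew}.
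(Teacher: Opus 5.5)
Your plan breaks at Step~1, and everything after it depends on that step. Put $b(y)=y_1\ell_B(y)$ and $c(y)=C_0(y,y)=y_1\ell_C(y)$ into \eqref{eq:universal-mixed}. The $p=2$ equation gives $\Sym_2\nabla''w(y,y)=y_1L_w(y)$, so you can cancel $y_1$. After eliminating $L_w$, however, you only reach the dichotomy \eqref{eq:singular-mixed2}:
\begin{itemize}
\item either $\ell_C$ is proportional to $\ell_B$, which is exactly the vanishing of your remainder $C_0/q+3\B/2$;
\item or $\ell_B$ divides $y_2L_1-y_1L_2$.
\end{itemize}
In the second branch, comparing coefficients (with \eqref{eq:H-skew} and metric compatibility) only yields $(\nabla_{e_2}e_3,e_1)=0$. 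This is the vanishing at that point of the skew second fundamental tensor $\mathfrak k$ of $H$ from \eqref{eq:global-skew-sections}, and no pointwise identity excludes it.

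The paper has to rule out $\mathfrak k\equiv0$ by a global argument:
\begin{itemize}
\item Assuming $\mathfrak k\equiv0$, the vanishing $(2,0)$ curvature gives $\mathfrak l\otimes r_L=0$.
\item The case $\mathfrak l\ne0$ is killed by a curvature computation that would force $Q_2=0$. Hence $\mathfrak l=\mathfrak a=0$, so $A_0=0$, and then $u_{11}=u_{13}=u_{12}=v_{11}=0$.
\item The transport of $G$ at $(p,r,j)=(1,2,1),(1,3,1)$, together with the differentiated root $Q(\bar e_1,\bar e_1)=0$, gives \eqref{eq:zero-skew-determinant}. This leads to $r_G\le3|S|^2$, contradicting \eqref{eq:compact-estimate}.
\end{itemize}
Only then does analyticity of $\mathfrak k$ make the good branch dense and give \eqref{eq:singular-proportionality}, hence \eqref{eq:singular-B-recurrence}, on all of $O_*$. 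Your proposal has no substitute for this step, and the integral estimate enters it essentially. Without it you do not have $S=\B\odot V$, the $\bar\partial$-equation for $\zeta$, or anything about $V$ and $\gamma$.

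Several later steps also need repair.
\begin{itemize}
\item Step~3: differentiating $w=\B(\zeta,\cdot)$ gives $\nabla_Xw=h(X)w$ with $h$ known only along $P$. The normal derivative of $q$ is fixed neither by \eqref{eq:normal-equation} nor by $\partial\zeta=W$. The normal coefficient $h(e_3)=-D$ comes from the $(2,0)$-compatibility \eqref{eq:singular-w-compat} of the $\B$-recurrence, evaluated on $(e_3,e_1)$.
\item Step~5 is not pointwise algebra. Using $S=-2\Sym_3Q$ with $Q_3=0$ and $Q_2=y_1M$ only gives $Q_1=-\ell V/2-y_2M$, and the zero determinant adds a single relation. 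The two constraints $s_0V+2bM\in\Span(\ell)$, which are also what define $\gamma$, come from the $p=2$ component of the unbarred derivative of $S=\B\odot V$ via \eqref{eq:S-transport}. There $Q$ must be written through these rows, not through \eqref{eq:Q-decomp} with unknown $G$.
\item Step~6, independence: $V=cU$ gives $M=\tfrac{s_0}{2b}(\gamma-c)U$. This vanishes only once the zero determinant of $Q_1|_{y_1=0}$ forces $c=\gamma$.
\item Step~6, density: pointwise vanishing of $w\wedge U\wedge Y$ only gives $Q_1=-\tfrac{s_0}{2}\gamma\,U(y)^2$ on $\{y_1=0\}$, so $u_{23}=-2\gamma Db/s_0$, not $0$. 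To get $\gamma=0$ on the open set, differentiate $V-\gamma U\in L$ along $e_2$, using that $L$ is preserved by all $(1,0)$ derivatives and that $\zeta(e_2)=0$.
\end{itemize}
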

\begin{proof}
\step{The mixed equation and the skew tensor.}
Retain the unitary frame in \eqref{eq:singular-unitary-B}. By \eqref{eq:universal-remainder} and \eqref{eq:singular-normal-N},
\[
\Nt_X=w\odot X+\alpha_{\rm im}(X)C_0,\qquad
C_0=\begin{pmatrix}F&H_0&-3s_0/2\\H_0&0&0\\-3s_0/2&0&0\end{pmatrix}\otimes\nu_0.
\]
Set
\[
\ell_B=A_0y_1+2Dy_2+2by_3,\qquad
\ell_C=Fy_1+2H_0y_2-3s_0y_3,
\]
\[
L_p(y)=\sum_j(\nabla_{\bar j}\alpha_{\rm im})_p y_j,\qquad
K_p(y)=\sum_j(G^*)_{pj}y_j,\qquad P_w(y)=\Sym_2\nabla''w(y,y).
\]
Differentiating the complete recurrence, including its free $p$-slot, gives
\begin{equation}\label{eq:singular-mixed1}
y_pP_w+y_1\ell_CL_p=-\frac12y_1\ell_BK_p,
\qquad p=1,2.
\end{equation}
The $p=2$ equation gives $P_w=y_1L_w$. Cancel $y_1$ and eliminate $L_w$ to obtain
\begin{equation}\label{eq:singular-mixed2}
\ell_C(y_2L_1-y_1L_2)=-\frac12\ell_B(y_2K_1-y_1K_2).
\end{equation}
If $\ell_B,\ell_C$ are independent, linear divisibility gives
\[
y_2L_1-y_1L_2=\ell_B(c_1y_1+c_2y_2+c_3y_3).
\]
The $y_3^2$ coefficient gives $c_3=0$. Put $\kappa=(\nabla_{e_2}e_3,e_1)$. By \eqref{eq:H-skew}, $(\nabla_{e_3}e_3,e_1)=0$, so metric compatibility makes the $y_3,y_2$ coefficients of $L_1$ equal to $0,\bar\kappa$, respectively. The $y_2y_3$ coefficient gives $c_2=0$, and then the $y_2^2$ coefficient gives $\kappa=0$.

Consequently, wherever the global skew section $\mathfrak k$ in \eqref{eq:global-skew-sections} is nonzero, $\ell_C$ is proportional to $\ell_B$. Their $y_3$ coefficients give
\begin{equation}\label{eq:singular-proportionality}
C_0=-\frac{3s_0}{2b}\B.
\end{equation}
We must first exclude $\mathfrak k\equiv0$ before using this on a dense set.

\step{Consequences of a zero skew tensor.}
Suppose $\mathfrak k=0$. On $M\setminus\Sigma$, let $A(X,Z)=\alpha_0(\nabla_XZ)$ for $Z\in H$ be the full second fundamental tensor. Its $H$-input restriction is zero; denote its remaining $L$-input restriction by $r_L$. Evaluate the zero off-diagonal $(2,0)$ curvature on $X,Y,Z\in H$. The differentiated $H$-input terms vanish, as do the terms with $\nabla_XY,\nabla_YX\in H$. The remaining normal bracket term is $-\mathfrak l(X,Y)r_L(Z)$, so $\mathfrak l\otimes r_L=0$.

If $\mathfrak l\ne0$ on an open set, that set meets $O_*$, and there $r_L=0$. Thus the whole second fundamental tensor vanishes, and the splitting $L\oplus H$ is parallel in both types. Put $f_{i\bar j}=R_{i\bar j1\bar1}$ in an adapted frame. All $R_{i\bar j1\bar a}$ and $R_{i\bar ja\bar1}$ vanish for $a=2,3$. The constant and linear coefficients in $H^c(e_1+ze_a)=0$ give $f_{1\bar1}=f_{1\bar a}=f_{a\bar1}=0$. The $12,13$ barred entries of $Q_3=0$ give $f_{2\bar2}=f_{2\bar3}=0$, and reality gives $f_{3\bar2}=0$.

Since $\B$ has the parallel line $L$ as a factor, $(\nabla_{\bar1}\B)_{23}=0$. The mixed Bianchi identity gives
\[
2(\nabla_{\bar1}\B)_{23}
=R_{1\bar13\bar3}-R_{3\bar11\bar3}
+R_{2\bar11\bar2}-R_{1\bar12\bar2}.
\]
The middle terms vanish. The $|z|^2$ coefficient of $H^c(e_1+ze_a)=0$ gives $R_{1\bar1a\bar a}+f_{a\bar a}=0$, so $f_{3\bar3}=f_{2\bar2}=0$. All $f$ vanish. The formula $Q_{2jl}=(R_{3\bar j1\bar l}+R_{3\bar l1\bar j})/2$ then gives $Q_2=0$, contrary to rank two. Therefore $\mathfrak l=0$ globally, and $\mathfrak a=2\mathfrak k-\mathfrak l=0$. This conclusion does not make $r_L$ vanish when $\mathfrak l=0$.

\step{Exclusion of the zero-skew case.}
The equality $\mathfrak a=0$ gives $A_0=0$ in \eqref{eq:singular-unitary-B}. Write $\Gamma_p{}^a_b=(\nabla_{e_p}e_b,e_a)$. Preservation of $L$ gives $\Gamma_p{}^2_1=\Gamma_p{}^3_1=0$ for every $p$, while $\mathfrak k=0$ gives $\Gamma_p{}^1_b=0$ for $p,b\in\{2,3\}$. Retain
\[
r_2=\Gamma_1{}^1_2,\qquad r_3=\Gamma_1{}^1_3.
\]
Since $\B_{11}=0$ and $L$ is unbarred-preserved, metric compatibility gives $(\nabla_{\bar j}\B)_{11}=0$; hence $u_{11}=0$. In \eqref{eq:singular-mixed1}, $L_1=\bar r_3y_1$ and $P_w=y_1L_w$, so
\[
y_1(L_w+\bar r_3\ell_C)=-\frac12\ell_BK_1.
\]
Here $\ell_B=2Dy_2+2by_3$ has no $y_1$ factor. Therefore $y_1$ divides $K_1$, giving $G_{21}=G_{31}=0$. By \eqref{eq:singular-G}, $u_{13}=0$ and $u_{12}=v_{11}$.

Compare the full covariant derivative of $G$ with \eqref{eq:G-transport}. At $(p,r,j)=(3,2,1)$ the former is zero by the displayed connection coefficients, while the latter is $-2bv_{11}-2Du_{13}$. Thus $v_{11}=u_{12}=0$. We now have
\[
G=2\begin{pmatrix}v_{13}&0&0\\0&-u_{23}&-u_{33}\\0&u_{22}-v_{12}&u_{23}-v_{13}\end{pmatrix},
\qquad Q(\bar e_1,\bar e_1)=0.
\]
At $(p,r,j)=(1,2,1),(1,3,1)$, the same two derivative formulas give
\begin{equation}\label{eq:zero-skew-system}
(v_{13}+u_{23})r_2+u_{33}r_3=bv_{13},\qquad
(u_{22}-v_{12})r_2+(u_{23}-2v_{13})r_3=bv_{12}.
\end{equation}
Differentiating the actual zero root $Q(\bar e_1,\bar e_1)=0$ along $e_1$ gives $v_{12}r_2+v_{13}r_3=0$, since the barred input derivative is a multiple of $\bar e_1$ minus $r_2\bar e_2+r_3\bar e_3$. The three equations form a homogeneous system for the nonzero vector $(r_2,r_3,b)$, so
\begin{equation}\label{eq:zero-skew-determinant}
u_{22}v_{13}^2-2u_{23}v_{12}v_{13}+u_{33}v_{12}^2=0.
\end{equation}
The surviving entries in \eqref{eq:singular-S-norm} give
\[
6|S|^2=8|u_{22}+2v_{12}|^2+16|u_{23}+v_{13}|^2+8|u_{33}|^2,
\qquad r_G=8\re(v_{13}^2-u_{23}v_{13}+u_{33}v_{12}).
\]
If $u_{33}=0$, then $u_{23}=0$, and $r_G\leq8|v_{13}|^2\leq3|S|^2$. If $u_{33}\ne0$, multiply \eqref{eq:zero-skew-determinant} by $u_{33}$ and use $u_{22}u_{33}=u_{23}^2$. This gives $(u_{23}v_{13}-u_{33}v_{12})^2=0$. Put $z=u_{23}+v_{13}$. Then
\[
u_{22}+2v_{12}=(z^2-v_{13}^2)/u_{33},\qquad r_G=8\re(v_{13}^2),
\]
and
\[
6|S|^2\geq16|z^2-v_{13}^2|+16|z|^2\geq16|v_{13}|^2.
\]
Again $r_G\leq3|S|^2$. This bound extends from $O_*$ to $M$ and contradicts \eqref{eq:compact-estimate}, since $\int_Mt^3>0$. Thus $\mathfrak k\not\equiv0$.

Analyticity makes $\{\mathfrak k\ne0\}$ dense. Equation~\eqref{eq:singular-proportionality} therefore extends to all of $O_*$, giving the recurrence in \eqref{eq:singular-B-recurrence} with $\zeta=(s_0/b)\alpha_{\rm im}$. To obtain its exterior equation, use \eqref{eq:normal-equation} and differentiate $w=q\B(\alpha_{\rm im},\cdot)$ along $P$. They give
\[
\partial\alpha_{\rm im}=(k_{\rm im}-\eta/2)\wedge\alpha_{\rm im},
\qquad Xq=(\eta(X)/2+\tau(X)-k_{\rm im}(X))q.
\]
Thus $\partial(q\alpha_{\rm im})=q\tau\wedge\alpha_{\rm im}=W$, using $\tau=b\varphi_2$ and $W=s_0\varphi_2\wedge\varphi_3$. The phase factors cancel, so $\zeta$ is intrinsic.

\step{The factor $U$ and the derivative of $w$.}
The factorization in Lemma~\ref{lem:singular-factor} gives $\zeta(U)=2$ and
\[
\xi=-\tfrac12\iota_UW=b\varphi_2-D\varphi_3,\qquad\xi(U)=0.
\]
Let $a_0=(\eta-3\zeta)/2$. Exterior differentiation of \eqref{eq:singular-B-recurrence} gives
\[
0=-W(X,Y)\B+(\nabla_Xw-a_0(X)w)\odot Y
-(\nabla_Yw-a_0(Y)w)\odot X+w\odot T(X,Y).
\]
After separating the trace torsion, put $H_X=\nabla_Xw-(\eta-3\zeta/2)(X)w$. Then
\begin{equation}\label{eq:singular-w-compat}
0=-W(X,Y)\B+H_X\odot Y-H_Y\odot X+w\odot T_0(X,Y).
\end{equation}
Preservation of $L$ gives $H_X=h_Xw$. The torsion components are
\[
T_0(e_1,e_2)=be_1,\quad T_0(e_3,e_1)=De_1,
\quad T_0(e_2,e_3)=A_0e_1+De_2+be_3.
\]
Evaluate \eqref{eq:singular-w-compat} on $(e_1,e_2)$ and $(e_3,e_1)$. Cancellation of $w$ gives $h_1=0,h_2=b,h_3=-D$, so $h=\xi$. The remaining pair agrees with $\B=w\odot U$. Thus the stated recurrence for $w$ holds. Its pure curvature gives $\partial(\eta-3\zeta/2+\xi)=0$, hence $\partial\xi=W/2$. Differentiating $\B=w\odot U$ and canceling $w$ gives $\nabla_XU=X-(\eta/2+\xi)(X)U$.

\step{Factorization of the cubic and the second vector.}
Write $\B(y)=y_1\ell(y)$ and let $Z_p(y)=\sum_j(\nabla_{\bar j}\zeta)_py_j$. The complete mixed equation for $\Nt_p=w\odot e_p-3\zeta_p\B/2$ is
\begin{equation}\label{eq:singular-mixed-full}
y_pP_w-\frac32\zeta_pS(y)
=\B(y)\bigl(\tfrac32Z_p(y)-\tfrac12K_p(y)\bigr).
\end{equation}
For $p=2$, $\zeta_2=0$, so $y_1\ell$ divides $y_2P_w$. First $y_1$ divides $P_w$. Since $\ell$ has nonzero $y_3$ coefficient and is not proportional to $y_2$, the remaining factor must be proportional to $\ell$. Thus $P_w=\nu_s\B(y)$. The $p=3$ equation, with $\zeta_3=s_0/b\ne0$, gives $S(y)=\B(y)V(y)$ for a unique linear form $V(y)$. Polynomial injectivity and smooth left inverses make $\nu_s,V$ smooth and compatible on overlaps. Substitution in \eqref{eq:singular-mixed-full} proves the last equation in \eqref{eq:singular-basic-recurrences}.

By \eqref{eq:singular-Q-rows}, $Q_3=0$ and $Q_2(y)=y_1M(y)$. Since $S=-2\Sym_3Q$, we get $Q_1(y)=-\ell(y)V(y)/2-y_2M(y)$. Differentiate $S=\B\odot V$ and use \eqref{eq:S-transport}. After canceling the $\eta$-term,
\begin{equation}\label{eq:V-polynomial}
\B(y)\bigl(\nabla_pV(y)-\tfrac32\zeta_pV(y)\bigr)+s_0y_1y_pV(y)
=-2\sum_{u,v}\eps_{puv}(\B_{av}y_a)Q_u(y).
\end{equation}
For $p=1,2,3$, the right sides are, respectively,
\[
-2by_1^2M,\qquad2by_1Q_1,\qquad
-2Dy_1Q_1+2y_1(A_0y_1+Dy_2+by_3)M.
\]
Cancel $y_1$. The $p=2$ identity becomes
\[
\ell(\nabla_2V+bV)=-y_2(s_0V+2bM).
\]
Linear divisibility gives $s_0V+2bM=\gamma\ell$ for a scalar $\gamma$. The other two equations give
\[
\nabla_1V=-\gamma e_1,\quad
\nabla_2V=-bV-\gamma e_2,\quad
\nabla_3V=(D+s_0/(2b))V+(\gamma/b)\ell-\gamma e_3,
\]
where $\ell=s_0U$ under the vector identification. These are the first equation of \eqref{eq:singular-VY} and the row formulas \eqref{eq:singular-row-factor}. The map $X\mapsto\zeta(X)U-X$ has eigenvalues $1,-1,-1$, so $\gamma$ is uniquely determined and smooth. Equivalently, $\gamma=-\tr(\nabla'V-a\otimes V)$.

\step{The recurrent line and its nondegeneracy.}
Since $\partial a=0$, the recurrences for $U,V$ give
\[
\nabla_XY=a(X)Y+\beta_0(X)U,\qquad
\beta_0=\partial\gamma+\tfrac12\gamma(\zeta-\eta).
\]
Pure curvature, with the bracket included, gives
\[
0=[\partial\beta_0-(a+r)\wedge\beta_0](X,Z)U
+\beta_0(Z)X-\beta_0(X)Z.
\]
Modulo $\Span(U)$, choose $X,Z$ with independent quotient classes to obtain $\beta_0(X)=\beta_0(Z)=0$. Then take $X=U$ and $Z$ with nonzero quotient to obtain $\beta_0(U)=0$. Thus $\beta_0=0$, proving the last two equations of \eqref{eq:singular-VY}.

If $V=cU$ at a point, restrict $Q_1$ in \eqref{eq:singular-row-factor} to $y_1=0$. With $\ell_0=Dy_2+by_3$ and $q_0=s_0/b$, it is
\[
-\frac1{s_0}\ell_0\{2c\ell_0+q_0(\gamma-c)y_2\}.
\]
The symmetric-matrix determinant of $(ay_2+by_3)(cy_2+dy_3)$ is $-(ad-bc)^2/4$. The zero restricted determinant and $bq_0\ne0$ therefore force $c=\gamma$. Then $M=0$ and $Q_2=Q_3=0$, contradicting rank two. Hence $U,V$ are independent everywhere.

Suppose $w\wedge U\wedge Y$ vanished on an open set. Since $w,U$ are independent, there $V=cU+le_1$ smoothly. Restricting to $y_1=0$ as above gives $c=\gamma$. Thus $F=V-\gamma U$ lies in $L$, and its derivative is
\[
\nabla_XF=a(X)F+2\gamma(\zeta(X)U-X).
\]
The line $L$ is preserved by all unbarred derivatives. Taking $X=e_2$, for which $\zeta(e_2)=0$, gives $\gamma=0$. Then $V\in L$ and \eqref{eq:singular-row-factor} gives $Q_1|_{y_1=0}=0$ on that open set. In particular $u_{23}=0$ there, contrary to Proposition~\ref{prop:singular-conic}. Thus the nonzero set of $w\wedge U\wedge Y$ is open and dense in $O_*$. The wedge is an ordinary scalar, because its line factor is $\KM\otimes\det E$, and no extension of it beyond $O_*$ is used.
\end{proof}

\vspace{0.3cm}

\noindent {\bf 5). The determinant identity and the final estimate.}

\vspace{0.1cm}

\begin{proposition}\label{prop:singular-flat}
If $\det\B\equiv0$, then $R=0$.
\end{proposition}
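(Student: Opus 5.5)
We argue by contradiction: assume $R\not\equiv0$ and $\delta\equiv0$, and work on the open dense set $O_*$ where Proposition~\ref{prop:singular-recurrence} supplies the recurrent frame $w,U,Y$. The section title suggests a determinant identity followed by a final estimate, and we aim for that structure. The overall goal is the pointwise bound $r_G\leq 3|S|^2$ (or $r_G\leq6(|S|^2+|\Nt|^2)$) on a dense set. Combined with \eqref{eq:compact-estimate}, this forces $\int_Mt^3=0$. That contradicts Lemma~\ref{lem:cubic-dense}.

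\textbf{Step 1: the determinant identity.} Consider the scalar $\Delta_0=w\wedge U\wedge Y$. It is an ordinary function, and by Proposition~\ref{prop:singular-recurrence} it is nonzero on a dense open subset. Using \eqref{eq:singular-basic-recurrences} and \eqref{eq:singular-VY}, its unbarred derivative is
\[
\partial\Delta_0=\bigl(\eta-\tfrac32\zeta+\xi-r+a+\tr\text{-terms}\bigr)\Delta_0+\zeta(\cdot)\,w\wedge U\wedge U,
\]
so $\partial\log\Delta_0$ is an explicit combination of $\eta,\zeta,\xi$. The last term vanishes because $\nabla_XU=X-r(X)U$ contributes only $w\wedge X\wedge Y$, which has to be computed and not ignored.

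Next we differentiate this in barred directions. The barred derivatives of $\zeta$ are given by the last line of \eqref{eq:singular-basic-recurrences}, those of $\eta$ by $\Phi$, and those of $\xi$ by $W$. Closedness $\bp\partial\log|\Delta_0|^2$ then yields an algebraic identity among $G$, $\nu_s$, $V$ and the torsion. The plan is to compare the trace of this identity with $\chi=\tr\Phi$. Writing everything in the adapted frame of \eqref{eq:singular-unitary-B}, this should express $v_{13},v_{12}$, and the real or imaginary parts of $u_{23}$, in terms of $\gamma$, $b$, $D$, $s_0$.

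\textbf{Step 2: pin down the rows of $Q$.} Substitute $M=\frac{s_0}{2b}(\gamma U-V)$ from \eqref{eq:singular-row-factor} into \eqref{eq:singular-Q-rows}. This gives $u_{ij}$ and $v_{ij}$ as explicit bilinear expressions in the components of $U$ and $V$. Imposing the degenerate-conic condition $u_{22}u_{33}=u_{23}^2$ should force a linear relation among $V_2,V_3,U_2,U_3$. Combined with Step 1, we expect this to give $\re v_{13}=-\tfrac52 a_0$ together with a relation killing $a_0$, or an inequality of the form $30a_0^2\leq 6b_0^2+\dots$ in \eqref{eq:middle-square}. The intended result is $r_G-6|S|^2\leq0$ on $O_*$, perhaps after adding the $6|\Nt|^2$ term, since $\Nt$ is known explicitly here from \eqref{eq:singular-B-recurrence}.

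\textbf{Step 3: conclude.} Extend the pointwise inequality to $M$ by continuity and multiply by $t$. Proposition~\ref{prop:compact-estimate} then yields $\int_Mt^3\le0$, hence $\B=0$, contradicting Lemma~\ref{lem:cubic-dense}.

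The main obstacle is Step 1. We must get the barred derivative of $\Delta_0$ exactly right, including the $\bar\partial$ of $U$ and $V$, which is only indirectly available via \eqref{eq:N-mixed} and \eqref{eq:singular-mixed-full}. We must also check that the resulting scalar identity actually controls the sign of $a_0^2$ in \eqref{eq:middle-square}. If it does not, the fallback is a weighted integral, using Lemma~\ref{lem:adjoint-density} with $\Omega$ and $\partial a=0$, to show $\int v|\eta_f|^2$ is bounded by a quantity that vanishes.
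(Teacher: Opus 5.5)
Your endgame is the same as the paper's: a pointwise bound $r_G\le3|S|^2$ on the dense set where $w\wedge U\wedge Y\ne0$, extended by continuity and played against \eqref{eq:compact-estimate}. The route to that bound, however, has a genuine gap. The decisive step, proving $\gamma=0$, is missing, and without it the bound is false.

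\textbf{The determinant relation is quadratic, and the algebra alone fails.} The determinant that matters is not $\Delta_0=w\wedge U\wedge Y$. It is the zero determinant of $Q^c|_{\Sym^2H}$ from Proposition~\ref{prop:singular-conic}, which you do invoke in Step 2. Your weighted-integral fallback was already spent producing it, through Lemma~\ref{lem:negative-line}. Rewritten in the quotient basis $U(y),Y(y)$ as in \eqref{eq:restricted-invariant}, this condition gives the quadratic relations \eqref{eq:gamma-det}: $H_0^2=8\gamma(z_0-2\gamma)$ and $(H_0-4\gamma)^2=-16\gamma h_0$, where $z_0=\zeta(Y)$, $h_0=\xi(Y)$, $H_0=z_0+2h_0$. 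You expect a linear relation among components of $U,V$. The linear relation $H_0=0$, i.e.\ $bV_2+(z-D)V_3=0$ with $z=s_0/(2b)$, appears only after $\gamma=0$ is known. It is exactly what makes $r_G\le2|zbc|^2\le3|S|^2$ work.

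The pointwise data cannot force the bound, even with $6|\Nt|^2$ added. In \eqref{eq:singular-row-factor} take $A_0=0$, $V=\lambda e_3$, real $z,D>0$ and $\gamma=\lambda(\sqrt z+\sqrt D)^2$. Then $u_{22}u_{33}=u_{23}^2$ holds, yet $r_G=2\lambda^2D(6z+6\sqrt{zD}+D)$ while $6|S|^2=\lambda^2(4D^2+8|b|^2)$. Moreover $\Nt_X=w\odot X-\frac32\zeta(X)\B$ and $t$ do not depend on $\lambda$, so large $\lambda$ defeats even $r_G\le6(|S|^2+|\Nt|^2)$. No completion of squares in \eqref{eq:middle-square} can absorb the $30a_0^2$ term without differential input.

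\textbf{Step 1 does not supply that input.} First, the term $w\wedge X\wedge Y$ coming from $\nabla_XU=X-r(X)U$ does not vanish. It equals $\Delta_0$ times the $U$-coordinate of $X$ in the basis $w,U,Y$, namely $\frac12\zeta-\frac{z_0}{2h_0}\xi$.

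Second, and more seriously, $\bp\Delta_0$ needs $\nabla''U$, $\nabla''Y$ and more of $\nabla''w$ than $\Sym_2\nabla''w=\nu_s\B$. None of these is supplied by \eqref{eq:singular-basic-recurrences} or \eqref{eq:singular-VY}. Since $\partial\bp F=-\bp\partial F$ for every function $F$, closedness of $\partial\bp\log|\Delta_0|^2$ gives no identity. The only non-circular content of your formula for $\partial\log\Delta_0$ is $\partial^2=0$.

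\textbf{What the paper does instead.} It works at pure type $(1,0)$, coupled to \eqref{eq:gamma-det}.
\begin{itemize}
\item From $\nabla_XY=a(X)Y$ and $W=-\zeta\wedge\xi$ it derives $\partial h_0=h_0k_0-\frac12H_0\xi$ and $\partial\gamma=\gamma k_0$, with $k_0=(\eta-\zeta)/2$.
\item On $\{\gamma\ne0\}$ these give $\partial m_0=\frac{4m_0}{m_0-4}\xi$ for $m_0=H_0/\gamma$. Then $\partial^2m_0=0$ together with $\partial\xi=W/2\ne0$ forces $m_0=0$.
\item The restricted form becomes the nonzero square $-\frac{s_0}{2\gamma}V(y)^2$. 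Its radical is a holomorphic line, so $\Span(w,V)$ is $\nabla'$-preserved. This contradicts $\nabla_UV=a(U)V+\gamma U$, so $\gamma=0$.
\end{itemize}
Only then do the parametrization $V=(x,kc,-bc)$ and the final estimate become available.
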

\begin{proof}
Suppose $R\not\equiv0$ and use Proposition~\ref{prop:singular-recurrence}. Let
\[
O=\{x\in O_*:w\wedge U\wedge Y\ne0\}.
\]
It is open and dense in $O_*$, and hence dense in $M$. All calculations with the independent triple $w,U,Y$ below are confined to $O$.

\step{The restricted determinant in invariant form.}
Set
\[
h_0=\xi(Y),\qquad z_0=\zeta(Y),\qquad H_0=z_0+2h_0,\qquad
k_0=(\eta-\zeta)/2.
\]
In the frame \eqref{eq:singular-unitary-B},
\[
w=s_0e_1\otimes\nu_0,\quad
U=(A_0e_1+2De_2+2be_3)/s_0,\quad
\zeta=(s_0/b)\varphi_3,\quad\xi=b\varphi_2-D\varphi_3.
\]
Since $W$ has kernel $L$ and $W(U,\cdot)=-2\xi$, independence of $w,U,Y$ gives $h_0\ne0$. The same components give
\begin{equation}\label{eq:W-zeta-xi}
W=-\zeta\wedge\xi.
\end{equation}
The recurrence for $w$ is also the recurrence for $W$, under the parallel alternating-symbol identification. Differentiate $\xi=-\iota_UW/2$, using the full recurrence of $U$, to obtain
\[
\nabla_X\xi=(\eta/2-3\zeta/2)(X)\xi-\tfrac12W(X,\cdot).
\]
Differentiate $h_0=\xi(Y)$ and use $\nabla_XY=a(X)Y$ and \eqref{eq:W-zeta-xi}. The result is
\begin{equation}\label{eq:h-gamma}
\partial h_0=h_0k_0-\tfrac12H_0\xi,\qquad \partial\gamma=\gamma k_0.
\end{equation}

On $E/L$, the vectors $U,Y$ form a basis. Evaluation by $\zeta,\xi$ gives
\[
e_2=\frac b{h_0}Y-\frac{bz_0}{2h_0}U\pmod L.
\]
Substitute this into \eqref{eq:singular-row-factor} with $y_1=0$, using $\ell(y)=s_0U(y)$ and $V=Y-\gamma U$. We obtain
\begin{equation}\label{eq:restricted-invariant}
Q_1|_{y_1=0}=\frac{s_0}{2h_0}
\bigl\{Y(y)^2-(H_0/2+2\gamma)U(y)Y(y)
+\gamma(z_0+h_0)U(y)^2\bigr\}.
\end{equation}
The restricted polynomials $U(y),Y(y)$ are independent. Since the symmetric-matrix determinant of $ax^2+bxy+cy^2$ is $ac-b^2/4$, the zero determinant in Proposition~\ref{prop:singular-conic} gives
\begin{equation}\label{eq:gamma-det}
H_0^2=8\gamma(z_0-2\gamma),\qquad
(H_0-4\gamma)^2=-16\gamma h_0.
\end{equation}
These are complex bilinear squares, without conjugation.

\step{The coefficient $\gamma$ vanishes.}
Work on the open subset of $O$ where $\gamma\ne0$. Put $m_0=H_0/\gamma$ and $r_0=h_0/\gamma$. Equation~\eqref{eq:gamma-det} says $(m_0-4)^2=-16r_0\ne0$. From \eqref{eq:h-gamma},
\[
\partial r_0=-\frac12m_0\xi,\qquad
\partial m_0=\frac{4m_0}{m_0-4}\xi.
\]
Apply $\partial$ again. The derivative of the scalar coefficient in the second equation is proportional to $\partial m_0$, so its wedge with $\xi$ vanishes. Since $\partial\xi=W/2$, we get
\[
0=\frac{2m_0}{m_0-4}W.
\]
Thus $m_0=0$. Equation~\eqref{eq:gamma-det} now gives $z_0=2\gamma$, $h_0=-\gamma$, and \eqref{eq:restricted-invariant} becomes
\begin{equation}\label{eq:restricted-square}
Q_1|_{y_1=0}=-\frac{s_0}{2\gamma}V(y)^2.
\end{equation}
It is a nonzero rank-one quadratic form, and $V\notin L$.

The radical of the conjugate restricted form is the smooth line
\[
L_{\rm rad}=\{X\in H:(X,w)=0,\ (X,V)=0\}.
\]
The first condition simply records $H=L^\perp$, with the canonical factor in $w$ suppressed. We verify that this radical is holomorphic on the present open set. For $X\in L_{\rm rad}$ and $Z\in H$, $Q^c(X,Z)=0$. Differentiate in a barred direction. The output term vanishes by the conjugate of \eqref{eq:Q-transport}; the term containing $\nabla''Z$ vanishes because $H$ is holomorphic and $X$ is in the radical. Hence $Q^c(\nabla''X,Z)=0$ for all $Z\in H$. Since $\nabla''X\in H$, it belongs to the radical. Thus $L_{\rm rad}$ is $\bp$-preserved, and its orthogonal complement $\Span(w,V)$ is preserved by unbarred Chern derivatives.

Take $X=U$ in \eqref{eq:singular-VY}, using $\zeta(U)=2$. It gives
\[
\nabla_UV=a(U)V+\gamma U.
\]
The left side and the first term lie in $\Span(w,V)$, while $U$ does not, since $w,U,Y$ and hence $w,U,V$ are independent. Thus $\gamma=0$, a contradiction. Its nonzero set in $O$ is empty. By continuity and density, $\gamma=0$ on all of $O_*$. Only this smooth scalar is continued; the quotients by $h_0$ or $\gamma$ are not.

\step{The quantitative estimate.}
On $O$, equation \eqref{eq:gamma-det} gives $H_0=0$ and $Y=V$. At a point use \eqref{eq:singular-unitary-B}, and put $z=q/2=s_0/(2b)$ and $k=z-D$. The equation $H_0=0$ is $bV^2+kV^3=0$. Since $b\ne0$, write
\[
V=(x,kc,-bc)
\]
for scalars $x,c$. These are pointwise component names; the frame is not being differentiated here. Equation~\eqref{eq:singular-row-factor} becomes
\begin{equation}\label{eq:final-row-polynomials}
\begin{split}
Q_1(y)&=(-A_0y_1/2+ky_2-by_3)(xy_1+kcy_2-bcy_3),\\
Q_2(y)&=-zy_1(xy_1+kcy_2-bcy_3),\qquad Q_3(y)=0.
\end{split}
\end{equation}
Remembering that the coefficient of a mixed monomial is twice its symmetric-matrix entry, the contraction in \eqref{eq:Q-decomp} gives
\begin{equation}\label{eq:final-G-matrix}
G=\begin{pmatrix}
zbc&0&0\\
bx-A_0bc/2&2kbc&-2b^2c\\
(k+2z)x-A_0kc/2&kc(2k+z)&-bc(2k+z)
\end{pmatrix}.
\end{equation}
Its trace is zero, and direct multiplication gives
\begin{equation}\label{eq:final-G-trace}
\tr G^2=b^2c^2\{z^2+4k^2+(2k+z)^2-4k(2k+z)\}
=2z^2b^2c^2.
\end{equation}
Therefore $r_G\leq2|bc|^2|z|^2$.

Since $S=\B\odot V$, three of its independent entries are
\[
S_{122}=\frac23Dkc,\qquad S_{133}=-\frac23b^2c,\qquad
S_{123}=\frac13bc(k-D).
\]
Their ordered multiplicities are $3,3,6$, respectively. Thus
\begin{equation}\label{eq:final-S-bound}
3|S|^2\geq4|Dkc|^2+4|b^2c|^2+2|bc(k-D)|^2
\geq2|bc|^2|k+D|^2=2|bc|^2|z|^2.
\end{equation}
For the second inequality, subtraction of the right side gives
\[
4|c|^2\{\,|Dk|^2+|b|^4-2|b|^2\re(k\bar D)\,\}\geq0,
\]
by $\re(k\bar D)\leq|Dk|$ and $|Dk|^2+|b|^4\geq2|b|^2|Dk|$. Hence $r_G\leq3|S|^2$ on $O$.

Both sides are global smooth scalar contractions, so this inequality extends to $M$ by density. The tensor $S=\B\odot V$ is nonzero on $O$: $\B\ne0$, $V\ne0$, and symmetric polynomial multiplication has no zero divisors. Proposition~\ref{prop:compact-estimate} therefore gives $\psi\geq0$ and
\[
0\geq6\int_Mt|\Nt|^2+3\int_Mt|S|^2
+\frac83\int_Mt^3+2\int_M\psi t^2\geq0.
\]
It follows that $t=0$ everywhere, contrary to rank two of $\B$ on $O_*$. This proves the proposition. The final integrals involve only smooth tensors on the whole compact manifold, including every rank drop and determinant zero.
\end{proof}

\vspace{0.3cm}

\section{Proof of Theorem \ref{thm1}}\label{sec:conclusion}
\begin{proof}[{\bf Proof of Theorem~\ref{thm1}}]

Let $(M^3,g)$ be a compact Hermitian manifold of complex dimension three with vanishing Chern holomorphic sectional curvature. Assume that the Chern curvature tensor is not identically zero, namely, $R\not\equiv0$. Proposition~\ref{prop:Qzero} excludes the case $Q=0$. Proposition~\ref{prop:rank} implies that the rank of $Q$ is two, and Lemma~\ref{lem:continuation} makes its regular locus open and dense. The pure-trace case is excluded by Lemma~\ref{lem:pure-trace}. Lemma~\ref{lem:cubic-dense} and Propositions~\ref{prop:zero-curl}--\ref{prop:flag} then supply the nonempty dense open set $O_*$.

The global section $\delta=\det\B$ is either identically zero or not. If $\delta\not\equiv0$, Proposition~\ref{prop:normalization} constructs the nonempty regular set $U$ and its dense nonzero-acceleration subset. Then Proposition~\ref{prop:full-flat} gives us a contradiction there. If $\delta\equiv0$, Propositions~\ref{prop:singular-conic} and \ref{prop:singular-recurrence} give the holomorphic quotient and the full recurrence on $O_*$. Proposition~\ref{prop:singular-flat} then gives a global scalar bound contradicting the integral inequality. Both alternatives are impossible. Hence we must have $R=0$ on $M$, and the proof of Theorem \ref{thm1} is completed.
\end{proof}

Let us finish the discussion by clarifying the role of the exceptional sets and of the quadratic-form ranks in the proof.

First of all, the determinant alternatives above are global alternatives for the section $\delta$. They are not the pointwise partition $\{\delta=0\}\cup\{\delta\ne0\}$. In the case $\delta\not\equiv0$, the section $\delta$ may still have zeros. The proof uses only a nonempty regular open set on which a contradiction is obtained; it does not extend $F_i$, $N_0$, $\mu$, or $\B^{-1}$ across those zeros.

Second, neither the source-zero case nor the lower-rank loci are omitted in the integral argument. The tensors in \eqref{eq:scalar} and \eqref{eq:weighted-energy} are smooth on $M$. If $3|S|^2+2|\Nt|^2$ were identically zero, then $S=0$, and Lemma~\ref{lem:cubic-dense} would give $R=0$. Individual zeros of the source remain in the negative-part test and in every integral. In the singular case, $\B$ has rank two only on $O_*$; lower ranks outside that set are allowed and are retained in the final global inequality.

Third, the extension arguments have distinct domains. The exterior-power uniqueness in Lemma~\ref{lem:continuation} uses a smooth barred system. After Lemma~\ref{lem:analytic}, analytic continuation is applied only to global analytic tensors such as $\B$, $W$, $G^2$, and the polynomial detectors. The form $\beta$ is extended by its analytic square relation. The quotient map $\alpha_0$ may vanish on $\Sigma$, and its kernel is not asserted to extend there as a plane bundle. Only the determinant of the restricted quadratic form is extended across $\Sigma$, by meromorphic descent and removal of divisorial poles. Finally, the scalar estimate $r_G\leq3|S|^2$ extends by continuity of its two globally smooth sides. The normalized fields used to establish it need not extend.

\begin{remark}
The output rank of $Q$ is different from the matrix rank of any individual quadratic row. On $O_2$, choose a basis of the two-dimensional output and write $Q^c=Aq_1+Bq_2$, where $q_1,q_2$ are independent scalar quadratic forms. Their common projective roots, common radical, and the ranks of the pencil $sq_1+tq_2$ are different notions. Neither a simple common root nor a nonsingular member of this pencil is assumed in the argument. In Section~\ref{sec:full}, the particular root $F_1$ is derived before it is differentiated. In Section~\ref{sec:singular}, the radical is used only after the restricted quadratic form is proved to be a nonzero square on the open set $\{\gamma\ne0\}$.

For comparison, a linear space of symmetric matrices in which every member has rank at most one has dimension at most one. A nonzero rank-one symmetric form is a scalar square; two independent such forms are $cl^2,dm^2$ with independent linear forms $l,m$, and their sum has rank two. This observation concerns the rank of every member of a pencil, not the output rank of $Q$ or the rank of a selected restriction. These descriptions therefore introduce no additional global case beyond the determinant alternatives in the proof.
\end{remark}

\vspace{0.3cm}

\noindent\textbf{Generative AI disclosure.}
The authors used OpenAI's ChatGPT as an assisting tool in the development of the paper. The authors take full responsibility for the contents of the paper.

\vspace{0.3cm}

\noindent\textbf{Declaration of competing interests.}
The authors declare that they have no competing interests.

\vspace{0.3cm}

\noindent\textbf{Statement on data availability.}
Data sharing is not applicable to this article as no datasets were generated during the current study.

\end{document}